\newtheorem{theorem}{Theorem}
\newtheorem{lemma}[theorem]{Lemma}
\newtheorem{proposition}[theorem]{Proposition}
\theoremstyle{definition}
\newtheorem{remark}[theorem]{Remark}
\title{\textbf{The Helmholtz decomposition of a $BMO$ type vector field in general unbounded domains}}
\author{Yoshikazu Giga \thanks{labgiga@ms.u-tokyo.ac.jp}}
\author{Zhongyang Gu \thanks{zgu@ms.u-tokyo.ac.jp}}
\affil{Graduate School of Mathematical Sciences, The University of Tokyo, 3-8-1 Komaba, Meguro-ku, 153-8914, Tokyo, Japan}
\begin{document}
\date{}
%\vspace{0.7cm}

\maketitle
%%%%%%%%%%%%%%%
\begin{abstract}
We consider a space of $L^2$ vector fields with bounded mean oscillation whose ``normal'' component to the boundary is well-controlled.
In the case when the dimension $n \geq 3$, we establish its Helmholtz decomposition for arbitrary uniformly $C^3$ domain in $\mathbf{R}^n$.
\end{abstract}
%
%\keywords{$BMO$, normal trace, zero extension, Jones' extension}
\begin{center}
Keywords: Helmholtz decomposition, $BMO$ space, uniformly $C^k$ domain.
\end{center}

%%%%%%%%%%%%%%%
%
% 原稿1-1 / 8
\section{Introduction} % Section 1
\label{sec:intro}
Let $\Omega \subset \mathbf{R}^n$ be an open subset. 
Throughout this entire paper, regarding dimension $n$ we always consider the case where $n \geq 2$ unless otherwise specified.
The Helmholtz decomposition of a vector field $f = (f_1, f_2, ... , f_n)$ defined in $\Omega$ is formally of the form
\[
f = f_0 + \nabla p,
\]
where $f_0$ is a divergence free vector field satisfying supplemental conditions like boundary condition and $\nabla p$ denotes the gradient of some scalar function $p$.
For $1<r<\infty$, the Helmholtz decomposition of $L^r(\Omega)^n$ is well studied.
To be specific, it gives a topological direct sum decomposition of the vector space $L^r(\Omega)^n$ of all $L^r$-vector fields of the form
\[
L^r(\Omega)^n = L_\sigma^r(\Omega) \oplus G^r(\Omega)
\]
with a space of divergence free vector fields
\[
L_\sigma^r(\Omega) := \overline{C_{\mathrm{c}, \sigma}^\infty(\Omega)}^{\| \cdot \|_{L^r(\Omega)}}
\]
and a space of gradients fields
\[
G^r(\Omega) := \{ \nabla h \in L^r(\Omega)^n \bigm| h \in L_{loc}^r(\Omega) \}.
\]
Here, 
\[
C_{\mathrm{c}, \sigma}^\infty(\Omega) := \{ u \in C_{\mathrm{c}}^\infty(\Omega)^n \bigm| \operatorname{div} u = 0 \},
\] 
where $C_{\mathrm{c}}^\infty(\Omega)$ denotes the space of all smooth functions compactly supported in $\Omega$.
We stress that this decomposition is not just algebraic but topological in the sense that projections $f \mapsto f_0$, $f \mapsto \nabla p$ to each corresponding subspace are bounded in $L^r(\Omega)^n$. In other words,
\begin{align} \label{HDL}
\| f_0 \|_{L^r(\Omega)} \leq C(\Omega,r) \| f \|_{L^r(\Omega)}, \quad \| \nabla p \|_{L^r(\Omega)} \leq C(\Omega,r) \| f \|_{L^r(\Omega)}
\end{align}
with some constant $C(\Omega,r)>0$ depending only on $\Omega$ and $r$.
In the case where $r=2$, this decomposition holds for an arbitrary domain as an orthogonal decomposition of the Hilbert space $L^2(\Omega)^n$.
In particular, the best possible choice of the constant $C(\Omega,2)$ in estimate (\ref{HDL}) is $1$.
For $r \in (1,\infty)$, the Helmholtz decomposition holds for various domains with $C^1$ boundary; see e.g. \cite{Gal}.
For a smooth domain, the decomposition holds for a bounded domain (\cite[n = 3]{Sol}, \cite{FM}), an exterior domain (\cite[n = 3]{Sol}, \cite{SiSo}), a layer domain (\cite{Miy94}, \cite{Far}), an infinite cylinder \cite{Far}, an aperture domain \cite{FaSo}, a perturbed half space \cite{SiSo} as well as the half space.
However, it is also known that there exist unbounded smooth domains which do not admit the $L^r$-Helmholtz decomposition; see e.g. \cite{Bogo}, \cite{Gal}, \cite{MaBo}.
Even if the domain $\Omega$ admits the $L^r$-Helmholtz decomposition, the dependence of constant $C(\Omega,r)$ with respect to $\Omega$ is unclear except the case where $r=2$.
In this decomposition, the gradient field $\nabla p$ must formally satisfy the homogeneous Neumann boundary condition $\partial p / \partial \mathbf{n} = 0$, where $\mathbf{n}$ denotes an exterior unit normal vector field of $\partial \Omega$.
By the way, the divergence free part $f_0$ can be further decomposed into
\[
f_0 = H + \operatorname{curl} \omega,
\]
where $H$ is a harmonic vector field at least when $\Omega$ is a three-dimensional bounded domain.
This decomposition is unique if we impose the boundary condition $H \cdot \mathbf{n} = 0$.
It is often called the $L^r$-Helmholtz-Weyl decomposition and projection operators to each corresponding subspace are bounded for a bounded domain in $\mathbf{R}^n$ ($n = 2,3$) as proved in \cite{KY}.
Recently, this decomposition is studied extensively. The $L^r$-Helmholtz-Weyl decomposition is extended to a three-dimensional exterior domain by \cite{HKSSY21b}, \cite{HKSSY22} and a two-dimensional exterior domain by \cite{HKSSY20}, \cite{HKSSY21a}. In a three-dimensional bounded domain, the space of harmonic vector fields represents the $L^r$-de Rham cohomology group (of degree $2$) if one identifies the space of vector fields with the space of two-forms on $\Omega$ \cite{KY}.

Let us go back to the Helmholtz decomposition. If we consider the space
\begin{equation*} 
	\widetilde{L}^r(\Omega) := \left \{
\begin{array}{lcr}
	L^r(\Omega)^n \cap L^2(\Omega)^n, \quad 2 \leq r < \infty, \\
	L^r(\Omega)^n + L^2(\Omega)^n, \quad 1<r<2
\end{array}
	\right.
\end{equation*}
instead, then for any uniformly $C^1$ domain $\Omega$, the Helmholtz decomposition of $\widetilde{L}^r(\Omega)$ holds for all $r \in (1,\infty)$. We have that
\[
\widetilde{L}^r(\Omega) = \widetilde{L}_\sigma^r(\Omega) \oplus \widetilde{G}^r(\Omega)
\]
where
\begin{equation*} 
	\widetilde{L}_\sigma^r(\Omega) := \left \{
\begin{array}{lcr}
	L_\sigma^r(\Omega) \cap L_\sigma^2(\Omega), \quad 2 \leq r < \infty, \\
	L_\sigma^r(\Omega) + L_\sigma^2(\Omega), \quad 1<r<2
\end{array}
	\right.
\end{equation*}
and
\begin{equation*} 
	\widetilde{G}^r(\Omega) := \left \{
\begin{array}{lcr}
	G^r(\Omega) \cap G^2(\Omega), \quad 2 \leq r < \infty, \\
	G^r(\Omega) + G^2(\Omega), \quad 1<r<2.
\end{array}
	\right.
\end{equation*}
Moreover, there exists a constant $C>0$ such that for any $f \in \widetilde{L}^r(\Omega)$, the Helmholtz decomposition of $f$ satisfies the estimate
\begin{align} \label{HDLT}
\| f_0 \|_{\widetilde{L}^r(\Omega)} + \| \nabla p \|_{\widetilde{L}^r(\Omega)} \leq C \| f \|_{\widetilde{L}^r(\Omega)},
\end{align}
see \cite{FKS}, \cite{FKSH}. We let $\Gamma = \partial \Omega := \overline{\Omega} \setminus \Omega$ to denote the boundary of $\Omega$. In this case, we know that the constant $C$ in estimate (\ref{HDLT}) depends only on $r$ and $\alpha, \beta, K$ where $\alpha, \beta, K$ are parameters that characterize the $C^1$ regularity of $\Gamma$. Here we would like to direct the readers to Section \ref{sub:LT} for the precise definition of a uniformly $C^k$ domain of type $(\alpha, \beta, K)$.

It is impossible to consider the Helmholtz decomposition for $L^\infty$ even if $\Omega=\mathbf{R}^n$ since the projection $f \mapsto \nabla p$ is a composite of the Riesz operators which is not bounded in $L^\infty$.
We have to replace $L^\infty$ with a class of functions with bounded mean oscillation ($BMO$ for short).
In order to understand previous results on the Helmholtz decomposition for $BMO$ vector fields defined in a domain, let us recall some definitions.
We firstly recall the $BMO^\mu$-seminorm for $\mu\in(0,\infty]$.
For a locally integrable function $u$, i.e., $u \in L^1_{loc}(\Omega)$, we define
\[
[u]_{BMO^\mu(\Omega)} := \sup \left\{ \frac{1}{\left|B_r(x)\right|} \int_{B_r(x)} \left| u(y) - u_{B_r(x)} \right| \, dy \biggm| B_r(x) \subset \Omega,\ r < \mu \right\},
\]
where $u_{B_r(x)}$ denotes the average over $B_r(x)$, i.e.,
\[
u_{B_r(x)} := \frac{1}{|B_r(x)|} \int_{B_r(x)} u(y) \, dy
\]
and $B_r(x)$ denotes the open ball of radius $r$ centered at $x$ and $|B_r(x)|$ denotes the Lebesgue measure of $B_r(x)$.
The space $BMO^\mu(\Omega)$ is defined as
\[
BMO^\mu(\Omega) := \left\{ u \in L^1_{loc}(\Omega) \bigm| [u]_{BMO^\mu(\Omega)} < \infty \right\}.
\]
This space may not agree with the space of restrictions of $f \in BMO^\mu(\mathbf{R}^n)$ in $\Omega$.
As in \cite{BG}, \cite{BGMST}, \cite{BGS}, \cite{BGST}, we introduce a seminorm which controls the boundary behavior.
For $\nu \in (0,\infty]$ and $M \subseteq \Gamma$, we set
\[
[u]_{b^\nu(M)} := \sup \left\{ r^{-n} \int_{\Omega\cap B_r(x)} \left| u(y) \right| \, dy \biggm| x \in M, \ 0<r<\nu \right\}.
\]
In these papers, the $BMO$ space 
\[
BMO^{\mu,\nu}_b(\Omega) := \left\{ u \in BMO^\mu(\Omega) \bigm| [u]_{b^\nu(\Gamma)} < \infty \right\}
\]
is considered.
Unfortunately, it turns out that such a boundary control for all components of a vector field is too strict to have the Helmholtz decomposition. 
Hence, we introduced a space of $BMO$ vector fields which separate tangential and normal components.
For $x \in \Omega$, let $d_\Gamma(x)$ denote the distance from the boundary $\Gamma$, i.e., $d_\Gamma(x) := \inf \left\{ |x - y|,\ y \in \Gamma \right\}$.
We consider
\[
vBMO^{\mu,\nu}(\Omega) := \left\{ f \in BMO^\mu(\Omega)^n \bigm| [\nabla d_\Gamma \cdot f]_{b^\nu(\Gamma)} < \infty \right\},
\]
where $\cdot$ denotes the standard inner product in $\mathbf{R}^n$.
The quantity $(\nabla d_\Gamma \cdot f)\nabla d_\Gamma$ on $\Gamma$ is the component of $f$ that is normal to the boundary $\Gamma$.
We set
\[
[f]_{vBMO^{\mu,\nu}(\Omega)} := [f]_{BMO^\mu(\Omega)} + [\nabla d_\Gamma \cdot f]_{b^\nu(\Gamma)}.
\]

If $\Omega = \mathbf{R}^n$ is the whole space, the Helmholtz decomposition of $BMO(\mathbf{R}^n)^n$ holds \cite{Miyak} as the Helmholtz projection in this case can be directly constructed by Riesz transformations.
If $\Omega = \mathbf{R}_+^n := \{ (x', x_n) \in \mathbf{R}^n \bigm| x_n > 0 \}$ is the half space, then by considering even extension for the tangential component and odd extension for the normal component for $f \in vBMO^{\infty,\infty}(\mathbf{R}_+^n)$, we can make use of the Helmholtz projection from the whole space case to prove that $vBMO^{\infty,\infty}(\mathbf{R}_+^n)$ admits the Helmholtz decomposition \cite{GigaGuAMSA}.
If $\Omega$ is a bounded $C^3$ domain, as the boundary $\Gamma$ has a fully curved part in the sense of \cite[Definition 7]{GigaGuPA}, we have that $vBMO^{\mu,\nu}(\Omega) = vBMO^{\infty,\infty}(\Omega)$ for any finite $\mu,\nu>0$ and $vBMO^{\infty,\infty}(\Omega) \subset L^1(\Omega)^n$. In this case, we can omit $\mu,\nu$ from the superscript and simply denote $vBMO^{\infty,\infty}(\Omega)$ by $vBMO(\Omega)$ without causing any ambiguity. Moreover, the multiplication by a H$\ddot{\text{o}}$lder continuous function is bounded in $vBMO(\Omega)$ since $vBMO(\Omega) \subset L^1(\Omega)^n$. As a result, the Helmholtz decomposition for $vBMO(\Omega)$ can be established through a potential theoretical approach \cite{GigaGuMA}.
If $\Omega = \mathbf{R}_h^n := \{ (x', x_n) \in \mathbf{R}^n \bigm| x_n > h(x') \}$ is a perturbed $C^3$ half space that has small perturbation, then in the case when the dimension $n \geq 3$, we could establish the Helmholtz decomposition for the space $vBMOL^2(\mathbf{R}_h^n) := vBMO^{\infty,\infty}(\mathbf{R}_h^n) \cap L^2(\mathbf{R}_h^n)^n$, see \cite{GigaGuP}.
The reason why we consider the intersection of $vBMO^{\infty,\infty}(\mathbf{R}_h^n)$ and $L^2(\mathbf{R}_h^n)^n$ for Helmholtz decomposition is because within this intersection, cut-offs by multiplication become valid \cite{Gu} and thus the potential theoretical approach in \cite{GigaGuMA} can be applied.
Here we would like to direct the readers to Section \ref{sub:ER} for the precise definition for a perturbed $C^3$ half space to have small perturbation.

Analogously, for general unbounded uniformly $C^3$ domain $\Omega \subset \mathbf{R}^n$, we consider the intersection space $vBMOL^2(\Omega) := vBMO^{\infty,\infty}(\Omega) \cap L^2(\Omega)^n$ for Helmholtz decomposition.
Let us note that $vBMOL^2(\Omega) = vBMO^{\mu_0,\nu_0}(\Omega) \cap L^2(\Omega)^n$ for any finite $\mu_0,\nu_0>0$ since estimates
\[
\frac{1}{|B_r(x)|} \int_{B_r(x)} | f - f_{B_r(x)} | \, dy \leq \frac{C(n)}{\mu_0^{\frac{n}{2}}} \| f \|_{L^2(\Omega)}, \quad r^{-n} \int_{B_r(z_0) \cap \Omega} |\nabla d_\Gamma \cdot f| \, dy \leq \frac{C(n)}{\nu_0^{\frac{n}{2}}} \| f \|_{L^2(\Omega)}
\]
hold for any $x \in \Omega$, $z_0 \in \Gamma$ and $r \geq \mu_0,\nu_0$. The purpose of this paper is to establish the Helmholtz decomposition for $vBMOL^2(\Omega)$ for arbitrary uniformly $C^3$ domain $\Omega \subset \mathbf{R}^n$ in the case when $n \geq 3$.
The main theorem reads as follows.

\begin{theorem} \label{MT}
Let $\Omega \subset \mathbf{R}^n$ be a uniformly $C^3$ domain of type $(\alpha, \beta, K)$ with $n \geq 3$.
Then for each $f \in vBMOL^2(\Omega)$, there exists a unique decomposition $f = f_0 + \nabla p$ with 
\begin{align*}
f_0 \in vBMOL^2_\sigma(\Omega) &:= vBMO^{\infty,\infty}(\Omega) \cap L^2_\sigma(\Omega), \\
\nabla p \in GvBMOL^2(\Omega) &:= \left\{ \nabla p \in vBMOL^2(\Omega) \bigm| p \in L^1_\mathrm{loc}(\Omega) \right\}
\end{align*}
satisfying the estimate
\begin{align} \label{MTE}
\| f_0 \|_{vBMOL^2(\Omega)} + \| \nabla p \|_{vBMOL^2(\Omega)} \leq C(\alpha, \beta, K) \| f \|_{vBMOL^2(\Omega)},
\end{align}
where $C(\alpha, \beta, K)>0$ is a constant that depends only on $\alpha, \beta, K$.
In particular, the Helmholtz projection $P_{vBMOL^2}$, which is defined by $P_{vBMOL^2}(f) = f_0$, is a bounded linear mapping on $vBMOL^2(\Omega)$ with range $vBMOL^2_\sigma(\Omega)$ and kernel $GvBMOL^2(\Omega)$.
\end{theorem}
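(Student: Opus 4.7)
The proof plan is as follows. Uniqueness is free: any decomposition of $f \in vBMOL^2(\Omega) \subset L^2(\Omega)^n$ into a divergence-free $L^2$ vector field plus an $L^2$-gradient must coincide with the $L^2$-orthogonal Helmholtz decomposition $f = f_0 + \nabla p$, which exists in every domain since $L^2_\sigma(\Omega)$ and $G^2(\Omega)$ are mutually orthogonal closed subspaces of $L^2(\Omega)^n$. The content of the theorem is therefore to upgrade the orthogonal components $f_0$ and $\nabla p$ to $vBMO^{\infty,\infty}(\Omega)$ and to prove the quantitative bound (\ref{MTE}). Because $vBMOL^2(\Omega) = vBMO^{\mu_0,\nu_0}(\Omega) \cap L^2(\Omega)^n$ for any finite $\mu_0, \nu_0$ as noted in the introduction, it suffices to control the local seminorms $[\cdot]_{BMO^{\mu_0}(\Omega)}$ and $[\nabla d_\Gamma \cdot \cdot]_{b^{\nu_0}(\Gamma)}$ of $f_0$ and $\nabla p$ for small $\mu_0, \nu_0$ chosen in terms of $(\alpha, \beta, K)$.

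The strategy is to reduce to the two local model cases in which the $BMO$-type Helmholtz decomposition is already available: the whole space \cite{Miyak} and the perturbed $C^3$ half space with small perturbation \cite{GigaGuP}. Using the uniform chart structure of Section \ref{sub:LT}, I would fix a scale $r_0 = r_0(\alpha, \beta, K)$ so small that for each boundary ball $B_{r_0}(x_0)$ with $x_0 \in \Gamma$, the local straightening $C^3$ diffeomorphism identifies $\Omega \cap B_{r_0}(x_0)$ with a neighborhood of the origin in a perturbed $C^3$ half space whose perturbation is admissible in the sense of Section \ref{sub:ER}. Fix a locally finite cover of $\Omega$ by interior and such boundary balls of radius $\sim r_0$, with a subordinate smooth partition of unity $\{\varphi_i\}$. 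By \cite{Gu}, multiplication by smooth cutoffs is admissible in $vBMOL^2$, so each $\varphi_i f$ lies in the $vBMOL^2$ space of its local model with seminorms bounded by $\|f\|_{vBMOL^2(\Omega)}$.

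Apply the local Helmholtz decomposition to each $\varphi_i f$ (whole-space theory on interior pieces, \cite{GigaGuP} on boundary pieces after straightening and then transported back to $\Omega$), obtaining local candidates $(f_0^{(i)}, \nabla p^{(i)})$ whose local $vBMO^{\mu_0,\nu_0}$ seminorms are controlled by $\|f\|_{vBMOL^2(\Omega)}$. Reassembling, the sums $\sum_i f_0^{(i)}$ and $\sum_i \nabla p^{(i)}$ differ from the true global $L^2$-components by a discrepancy whose distributional divergence and curl are generated only by commutator terms $f \cdot \nabla \varphi_i$, which are compactly supported on the overlap set and bounded in $L^2$ by $\|f\|_{L^2(\Omega)}$. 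Running this correction through the same local reduction (its compactly supported $L^2$ data yields $vBMO^{\mu_0,\nu_0}$ control via the local Helmholtz projection) identifies the reassembled decomposition with the global $L^2$-decomposition up to a term whose seminorms are dominated by $\|f\|_{vBMOL^2(\Omega)}$, and finite overlap of the cover closes the estimate (\ref{MTE}).

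The step I expect to be the main obstacle is faithful propagation of the normal-component seminorm $[\nabla d_\Gamma \cdot \cdot]_{b^{\nu_0}(\Gamma)}$ through the boundary straightening and the reassembly. In the flattened chart the natural normal direction is the $n$-th coordinate, while the intrinsic $\nabla d_\Gamma$ on $\Omega$ agrees with it only up to a $C^2$-error controlled by $(\alpha, \beta, K)$, and $\nabla d_\Gamma$ itself is smooth only in a tubular neighborhood of $\Gamma$ whose width is governed by $\beta$. Verifying that the local perturbed half space $b^{\nu_0}$-bound transfers to the intrinsic $b^{\nu_0}(\Gamma)$-bound with a constant depending only on $(\alpha, \beta, K)$, and that the correction term's contribution to the $b^{\nu_0}$-seminorm is absorbed into its compactly supported $L^2$ norm, is the delicate geometric bookkeeping on which the whole argument rests.
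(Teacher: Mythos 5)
There is a genuine gap, and it lies exactly where your plan replaces the paper's key idea with a ``local decompose then glue'' scheme. You propose to Helmholtz-decompose each localized piece $\varphi_i f = f_0^{(i)} + \nabla p^{(i)}$ in its model domain and then form $\sum_i f_0^{(i)}$, $\sum_i \nabla p^{(i)}$. But the local Helmholtz projection in a ball or in a perturbed half space does \emph{not} preserve compact support: $\varphi_i f$ is compactly supported in $D_i$, yet $f_0^{(i)}$ and $\nabla p^{(i)}$ generally are not. The sums therefore need not converge, and even formally $\sum_i f_0^{(i)}$ need not have vanishing normal trace on $\Gamma$ (it only has vanishing normal trace on each $\partial D_i$, which for boundary charts includes artificial internal boundaries, not just $\Gamma \cap D_i$). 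The claim that the ``discrepancy'' is generated only by commutator terms $f \cdot \nabla\varphi_i$ and can be absorbed by running it through the local reduction again is not supported and I do not see how to make it close; this is the heart of the argument and cannot be left heuristic.

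The paper proceeds in the opposite direction, and you may want to compare. It first uses a $BMO$--$L^r$ interpolation (Proposition \ref{ITP}, via the extension results of \cite{Gu} and Proposition \ref{ETBL}) to show $f \in \widetilde{L}^{2n}(\Omega)$, so that the \emph{global} decomposition $f = f_0 + \nabla p$ already exists via \cite{FKS}, \cite{FKSH} and agrees with the $L^2$-orthogonal one. It then localizes the already-decomposed $f_0$ and $\nabla p$, not $f$. Multiplying $f_0$ by a cutoff $\varphi$ destroys the divergence-free condition, so a Bogovski\u{\i} correction $\omega := B_{2n}(\nabla\varphi\cdot f_0)$ is introduced; then $\varphi f_0 - \omega$ is solenoidal in the local model with the correct boundary condition, $\varphi(p - M)$ is an admissible potential, and the pair is \emph{identified} (via the uniqueness of the $\widetilde{L}^{2n}$-Helmholtz decomposition and Proposition \ref{ITP}) as the local Helmholtz decomposition of the modified input $\varphi f + \nabla\varphi(p - M) - \omega$. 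Applying the local $vBMOL^2$-Helmholtz estimates (\cite{GigaGuMA} on balls, \cite{GigaGuP} on perturbed half spaces) to this identity, and then bounding the extra terms $\nabla\varphi(p-M)$ and $\omega$ by Poincar\'e, Morrey, and the Bogovski\u{\i} bound (\ref{BGOE}), gives the $BMO$ and $b$-seminorm bounds on $f_0$ and $\nabla p$ pointwise over all cutoff centers, with no summation or partition-of-unity reassembly needed at all. Your proposal is missing both the Bogovski\u{\i} correction, which is essential to make the cut-off field solenoidal, and the step that identifies the cut-off decomposition as \emph{the} local Helmholtz decomposition, which is what lets one invoke the local boundedness estimate. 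Your concern about transporting $[\nabla d_\Gamma \cdot {}]_{b^\nu}$ through the straightening is legitimate but is handled in the paper by constructing the boundary cutoff $\varphi_{z_0}$ so that $\nabla d \cdot \nabla\varphi_{z_0} = 0$ in a tubular neighborhood of $\Gamma$ (Proposition \ref{CFI}), so the commutator carries no normal component.
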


From now on, let us assume that $\Omega$ is a uniformly $C^3$ domain of type $(\alpha, \beta, K)$ with reach $R_\ast$. 
Here the exact meaning of reach $R_\ast$ of $\Gamma = \partial \Omega$ is defined in Section \ref{sub:LT}.
The key idea of the potential theoretical approach in \cite{GigaGuMA}, \cite{GigaGuP} is to solve the equation
\[
\Delta p = \operatorname{div} f \; \; \; \text{in} \; \; \; \Omega, \quad \frac{\partial p}{\partial \mathbf{n}} = f \cdot \mathbf{n} \; \; \; \text{on} \; \; \; \Gamma
\]
strongly. Although there exist a linear operator $f \mapsto p_1$ from $vBMOL^2(\Omega)$ to $L^\infty(\Omega)$ and a constant $C(\alpha,\beta,K,R_\ast)>0$ such that $- \Delta p_1 = \operatorname{div} f$ in $\Omega$ and
\[
\| \nabla p_1 \|_{vBMOL^2(\Omega)} \leq C(\alpha,\beta,K,R_\ast) \| f \|_{vBMOL^2(\Omega)},
\]
see \cite[Theorem 2]{GigaGuP}, it is generally hard to solve the Neumann problem
\begin{align} \label{NP}
\Delta u = 0 \; \; \; \text{in} \; \; \; \Omega, \quad \frac{\partial u}{\partial \mathbf{n}} = g \; \; \; \text{on} \; \; \; \Gamma
\end{align}
in the strong sense. In the case where $\Omega$ is bounded \cite{GigaGuMA}, the Neumann problem \eqref{NP} can be solved strongly for $g \in L^\infty(\Gamma)$ since the boundary $\Gamma$ is compact. Whereas in the case that $\Omega = \mathbf{R}_h^n$ $(n \geq 3)$ is a perturbed $C^3$ half space which has small perturbation \cite{GigaGuP}, by decomposing the boundary $\Gamma$ into the straight part and a curved part, we can treat the straight part as part of the boundary of the half space and the curved part as part of the boundary of some bounded domain.
The Neumann problem \eqref{NP} can therefore be solved strongly for $g \in L^\infty(\Gamma) \cap \dot{H}^{-\frac{1}{2}}(\Gamma)$ where $\dot{H}^{-\frac{1}{2}}(\Gamma)$ denotes the dual space of the homogeneous fractional Sobolev space $\dot{H}^{\frac{1}{2}}(\Gamma)$. 
The restriction $n \geq 3$ is mainly due to the fact that $\dot{H}^{\frac{1}{2}}(\Gamma)$ cannot be viewed as a subspace of distributions.
However, for general unbounded domain $\Omega$, the solvability for the Neumann problem \eqref{NP} is unclear. Hence, our strategy to prove Theorem \ref{MT} follows from \cite{FKS}, \cite{FKSH}.

For simplicity of notations, for any domain $\Omega \subseteq \mathbf{R}^n$ and $1<r<\infty$, we denote the intersection space $BMO^\infty(\Omega) \cap L^r(\Omega)$ by $BMOL^r(\Omega)$.
There exists a bounded linear extension for $BMOL^r(\Omega)$ with any $1<r<\infty$ \cite{Gu}, i.e., there exists a constant $C(\alpha,\beta,K)>0$ such that for any $u \in BMOL^r(\Omega)$, there exists $\overline{u} \in BMOL^r(\mathbf{R}^n)$ such that $\overline{u} \bigm|_{\Omega} = u$ and 
\[
\| \overline{u} \|_{BMOL^r(\mathbf{R}^n)} \leq C(\alpha,\beta,K) \| u \|_{BMOL^r(\Omega)}.
\]
Together with the $BMO-L^r$ interpolation inequality in the whole space $\mathbf{R}^n$ \cite{KW}, we can establish a $BMO-L^r$ interpolation inequality in domain $\Omega$ which guarantees that if $f \in vBMOL^2(\Omega)$, then $f \in \widetilde{L}^q(\Omega)$ for any $2 \leq q < \infty$.
We consider the case where $q = 2n$.
Since $\widetilde{L}^{2n}(\Omega)$ admits the Helmholtz decomposition, there exist unique $f_0 \in \widetilde{L}_\sigma^{2n}(\Omega)$ and $\nabla p \in \widetilde{G}^{2n}(\Omega)$ such that $f = f_0 + \nabla p$.
In the case when the dimension $n \geq 3$, we prove that this decomposition of $f$ is indeed the Helmholtz decomposition of $f$ in $vBMOL^2(\Omega)$.

Here is how we implement the strategy from \cite{FKS}, \cite{FKSH}.
Let $\varepsilon>0$ be sufficiently small. 
We require $\varepsilon$ to be smaller than a specific constant that depends on $\alpha, \beta, K$ which will be determined later in Chapter \ref{sec:3}.
In order to prove that $f_0, \nabla p \in vBMOL^2(\Omega)$, we consider two types of cut-off functions. 
For $x \in \Omega$ such that $d_\Gamma(x) \geq 3 \varepsilon$, we firstly consider $\varphi_x \in C_{\mathrm{c}}^\infty\big( B_{2 \varepsilon}(x) \big)$ with $\varphi_x = 1$ in $B_\varepsilon(x)$ and $\operatorname{supp} \varphi_x \subseteq \overline{B_{\frac{3 \varepsilon}{2}}(x)}$. Let $B_{2n}^x : L_0^{2n}\big( B_{2 \varepsilon}(x) \big) \to W^{1,2n}\big( B_{2 \varepsilon}(x) \big)^n$ be the Bogovski$\breve{\i}$ operator which solves the divergence problem
\[
\operatorname{div} u = g, \quad u \bigm|_{\partial B_{2 \varepsilon}(x)} = 0
\]
for $g \in L^{2n}\big( B_{2 \varepsilon}(x) \big)$ with integral zero in $B_{2 \varepsilon}(x)$. We set $\omega_x := B_{2n}^x\big( \nabla \varphi_x \cdot f_0 \big)$. Note that $\varphi_x f = \varphi_x f_0 + \varphi_x \nabla p$ implies that
\begin{align} \label{HDLBx}
\varphi_x f + \nabla \varphi_x (p - M_x) - \omega_x = \big( \varphi_x f_0 - \omega_x \big) + \nabla \big( \varphi_x (p - M_x) \big)
\end{align}
where $M_x$ denotes the average value of $p$ in $B_{2 \varepsilon}(x)$. We next show that $\varphi_x f + \nabla \varphi_x (p - M_x) - \omega_x$ belongs to $vBMOL^2\big( B_{2 \varepsilon}(x) \big)$. Since $B_{2 \varepsilon}(x)$ is certainly a smooth bounded domain that admits the Helmholtz decomposition \cite{GigaGuMA}, equality (\ref{HDLBx}) is indeed the Helmholtz decomposition of $\varphi_x f + \nabla \varphi_x (p - M_x) - \omega_x$ in $vBMOL^2\big( B_{2 \varepsilon}(x) \big)$.
Hence, there is a constant $C>0$, independent of $x$, such that the estimate
\begin{align*}
&\| \varphi_x f_0 - \omega_x \|_{vBMOL^2\big( B_{2 \varepsilon}(x) \big)} + \| \nabla \big( \varphi_x (p - M_x) \big) \|_{vBMOL^2\big( B_{2 \varepsilon}(x) \big)} \\
&\ \ \leq C \| \varphi_x f + \nabla \varphi_x (p - M_x) - \omega_x \|_{vBMOL^2\big( B_{2 \varepsilon}(x) \big)}
\end{align*}
holds for any $x \in \Omega$ with $d_\Gamma(x) \geq 3 \varepsilon$. 
By using Morrey's inequality and the boundedness of the Bogovski$\breve{\i}$ operator $B_{2n}^x$, we can estimate $\| \omega_x \|_{vBMOL^2\big( B_{2 \varepsilon}(x) \big)}$ by $\| f_0 \|_{L^{2n}(\Omega)}$, which can further be controlled by $\| f \|_{vBMOL^2(\Omega)}$.
Since the Sobolev space $W^{1,n}$ is continuously emdedded into $BMO$, by the Poincar$\acute{\text{e}}$ inequality we can estimate $\| \nabla \varphi_x (p - M_x) \|_{vBMOL^2\big( B_{2 \varepsilon}(x) \big)}$ by $\| \nabla p \|_{\widetilde{L}^{2n}\big( B_{2 \varepsilon}(x) \big)}$, which can also be further controlled by $\| f \|_{vBMOL^2(\Omega)}$. Therefore, the interior $BMO$ estimate for $f_0, \nabla p$ reads as
\[
[f_0]_{BMO^\varepsilon\big( \Omega_{2 \varepsilon} \big)} + [\nabla p]_{BMO^\varepsilon\big( \Omega_{2 \varepsilon} \big)} \leq C(\alpha, \beta, K) \| f \|_{vBMOL^2(\Omega)}
\]
where $\Omega_{2 \varepsilon} := \{ x \in \Omega \bigm| d_\Gamma(x) > 2 \varepsilon \}$.

The second type of cut-off function we consider is supported within a small neighborhood of the boundary.
For $z_0 \in \Gamma$ and $\rho>0$ sufficiently small, we let the intersection between $\Omega$ and the open ball $B_\rho(z_0)$ be denoted by $B^\Omega_\rho(z_0)$ and we consider an open neighborhood of $z_0$ named $U_\rho(z_0)$ that is of a special form which will be explicitly defined in Section \ref{sub:LT}; see Figure \ref{Fnb}.
\begin{figure}[htb]
\centering
\includegraphics[width=7.5cm]{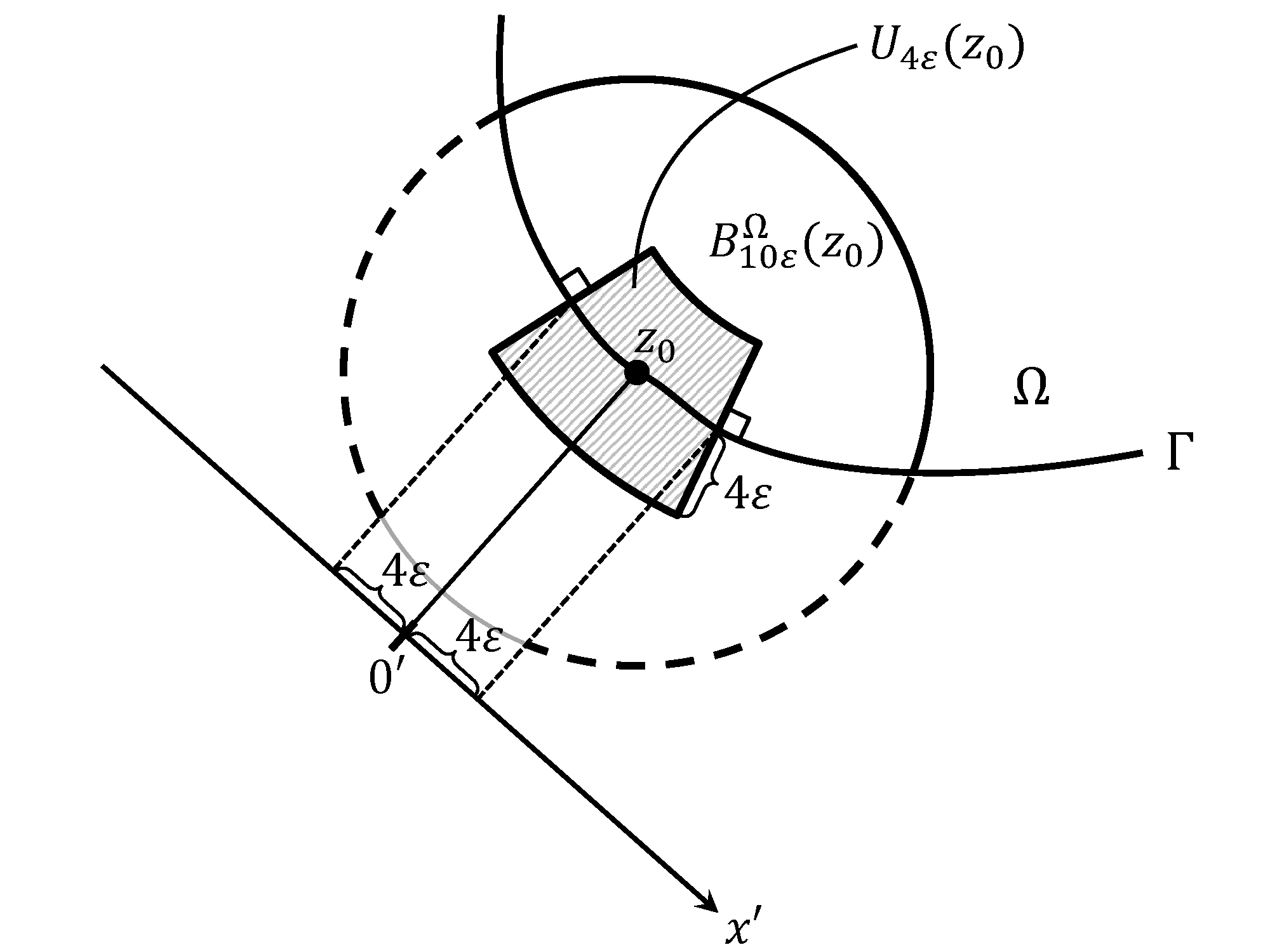}
\caption{$U_{4\varepsilon}(z_0)$ and $B_{10\varepsilon}^\Omega(z_0)$} \label{Fnb}
\end{figure}
We construct $\varphi_{z_0} \in C^2\big( B^\Omega_{12 \varepsilon}(z_0) \big)$ such that $\operatorname{supp} \varphi_{z_0} \subseteq \overline{U_{4 \varepsilon}(z_0)} \subset \overline{B^\Omega_{10 \varepsilon}(z_0)}$, $\varphi_{z_0} = 1$ in $U_{3 \varepsilon}(z_0)$ and $\big( \nabla d_\Gamma \cdot \nabla \varphi_{z_0} \big) (x) = 0$ for any $x \in \Omega$ such that $d_\Gamma(x) < 3 \varepsilon$.
Again, we follow the idea in \cite{FKS}, \cite{FKSH} to set $\omega_{z_0} := B_{2n}^{z_0}\big( \nabla \varphi_{z_0} \cdot f_0 \big)$ where $B_{2n}^{z_0} : L_0^{2n}\big( B^\Omega_{12 \varepsilon}(z_0) \big) \to W^{1,2n}\big( B^\Omega_{12 \varepsilon}(z_0) \big)^n$ is the Bogovski$\breve{\i}$ operator which solves the divergence problem 
\[
\operatorname{div} u = g, \quad u \bigm|_{\partial B^\Omega_{12 \varepsilon}(z_0)} = 0
\]
for $g \in L^{2n}\big( B^\Omega_{12 \varepsilon}(z_0) \big)$ with integral zero in $B^\Omega_{12 \varepsilon}(z_0)$. By multiplying the cut-off function $\varphi_{z_0}$ to $f = f_0 + \nabla p$, we have that in this case
\begin{align} \label{HDPHz}
\varphi_{z_0} f + \nabla \varphi_{z_0} (p - M_{z_0}) - \omega_{z_0} = \big( \varphi_{z_0} f_0 - \omega_{z_0} \big) + \nabla \big( \varphi_{z_0} (p - M_{z_0}) \big)
\end{align}
where $M_{z_0}$ denotes the average value of $p$ in $B^\Omega_{12 \varepsilon}(z_0)$.
Different from the interior case where equality (\ref{HDLBx}) is treated as being defined in a ball, in this case we treat equality (\ref{HDPHz}) as being defined in a perturbed $C^3$ half space $\mathbf{R}_{h_{z_0}^\ast}^n$ which has small perturbation, i.e., we view $B^\Omega_{12 \varepsilon}(z_0)$ as being contained in a perturbed $C^3$ half space $\mathbf{R}_{h_{z_0}^\ast}^n$ which has small perturbation.
Such concern is the key idea of \cite{FKS}, \cite{FKSH}.
We then show that $\varphi_{z_0} f + \nabla \varphi_{z_0} (p - M_{z_0}) - \omega_{z_0}$ belongs to $vBMOL^2\big( \mathbf{R}_{h_{z_0}^\ast}^n \big)$, which admits the Helmholtz decomposition \cite{GigaGuP}. As a result, equality (\ref{HDPHz}) is indeed the Helmholtz decomposition of $\varphi_{z_0} f + \nabla \varphi_{z_0} (p - M_{z_0}) - \omega_{z_0}$ in $vBMOL^2\big( \mathbf{R}_{h_{z_0}^\ast}^n \big)$ and there exists a constant $C(\alpha,\beta,K)>0$ such that the estimate
\begin{align*}
&\| \varphi_{z_0} f_0 - \omega_{z_0} \|_{vBMOL^2\big( \mathbf{R}_{h_{z_0}^\ast}^n \big)} + \| \nabla \big( \varphi_{z_0} (p - M_{z_0}) \big) \|_{vBMOL^2\big( \mathbf{R}_{h_{z_0}^\ast}^n \big)} \\
&\ \ \leq C(\alpha,\beta,K) \| \varphi_{z_0} f + \nabla \varphi_{z_0} (p - M_{z_0}) - \omega_{z_0} \|_{vBMOL^2\big( \mathbf{R}_{h_{z_0}^\ast}^n \big)}
\end{align*}
holds for any $z_0 \in \Gamma$. 
Then, we perform similar estimates as in the interior case. 
We use Morrey's inequality and the boundedness of the Bogovski$\breve{\i}$ operator $B_{2n}^{z_0}$ to estimate $\| \omega_{z_0} \|_{vBMOL^2\big( \mathbf{R}_{h_{z_0}^\ast}^n \big)}$ by $\| f \|_{vBMOL^2(\Omega)}$.
Since the Sobolev space $W^{1,n}$ is continuously embedded in $BMO$, we use the Poincar$\acute{\text{e}}$ inequality to estimate $\| \nabla \varphi_{z_0} (p - M_{z_0}) \|_{vBMOL^2\big( \mathbf{R}_{h_{z_0}^\ast}^n \big)}$ by $\| f \|_{vBMOL^2(\Omega)}$. Therefore, the $vBMO$ estimate for $f_0, \nabla p$ up to the boundary reads as
\[
[f_0]_{BMO^{\frac{\varepsilon}{2}}\big( \Gamma_{3 \varepsilon}^{\mathbf{R}^n} \big)} + [\nabla d \cdot f_0]_{b^\varepsilon(\Gamma)} + [\nabla p]_{BMO^{\frac{\varepsilon}{2}}\big( \Gamma_{3 \varepsilon}^{\mathbf{R}^n} \big)} + [\nabla d \cdot \nabla p]_{b^\varepsilon(\Gamma)} \leq C(\alpha,\beta,K) \| f \|_{vBMOL^2(\Omega)}.
\]
Together with the interior $BMO$ estimate for $f_0, \nabla p$, we obtain Theorem \ref{MT}.

As we have discussed before, the $BMO-L^2$ interpolation inequality implies the continuous embedding of $vBMOL^2(\Omega)$ in $\widetilde{L}^{2n}(\Omega)$. As a result, the existence of the decomposition for $f$ in $vBMOL^2(\Omega)$ into the sum of a solenoidal vector field $f_0$ and a gradient field $\nabla p$ is already guaranteed by the Helmholtz decomposition of $f$ in $\widetilde{L}^{2n}(\Omega)$. In order to establish the Helmholtz decomposition for $f \in vBMOL^2(\Omega)$, it is sufficient to establish estimate (\ref{MTE}) with the constant depending only on the regularity of $\Gamma = \partial \Omega$.
Since the $L^2$ part of estimate (\ref{MTE}) is also guaranteed by the Helmholtz decomposition of $f$ in $\widetilde{L}^{2n}(\Omega)$, we only need to estimate the $vBMO^{\mu,\nu}$-seminorm of $f_0$ and $\nabla p$ for finite $\mu,\nu$. Thus, it is natural to follow the strategy from \cite{FKS}, \cite{FKSH} to cut-off $f$ in order to make use of existing results on the Helmholtz decomposition of the space $vBMOL^2$. 
For the interior part that is far away from the boundary $\Gamma$, we could locally make use of the Helmholtz decomposition of $vBMOL^2$ in either an open ball (which is a bounded domain) or the half space. Both choices would ensure that the constant in the estimate is explicitly determined as a fixed number.
For the region that is near the boundary $\Gamma$, although we can still locally make use of the Helmholtz decomposition of $vBMOL^2$ in a bounded domain, but after the cut-off, part of the boundary $\Gamma$ is involved and in this case, we have an estimate with a constant whose dependency on $\Gamma$ is unclear. 
Therefore, for the region that is near $\Gamma$, the only choice we can consider is to locally make use of the Helmholtz decomposition of $vBMOL^2$ in a slightly perturbed half space.
Since the Helmholtz decomposition of $vBMOL^2$ in a slightly perturbed half space is only established in the case when the dimension $n \geq 3$ \cite{GigaGuP}, our main theorem also needs to require the dimension $n \geq 3$.
If the Helmholtz decomposition of $vBMOL^2$ in a slightly perturbed half space can be established in the case when $n=2$, then the statement of Theorem \ref{MT} is also valid for the case $n=2$.

The organization of this paper is as follows.
Main contents in Chapter \ref{sec:2} are to ensure that the boundedness of the Bogovski$\breve{\i}$ operator, Morrey's inequality and the Poincar$\acute{\text{e}}$ inequality holds for both domains $B_\rho(x)$ and $B^\Omega_\rho(z_0)$ with explicit dependency on constants in estimates. We construct our desired cut-off functions $\varphi_x$ and $\varphi_{z_0}$ in Section \ref{sub:LT}. In Section \ref{sub:GPI}, we prove that the domain $B^\Omega_\rho(z_0)$ with sufficiently small $\rho$ is bounded Lipschitz and star-like.
For a perturbed $C^3$ half space $\mathbf{R}_h^n$ that has small perturbation, the constant in the estimate of the Helmholtz decomposition of $vBMOL^2\big( \mathbf{R}_h^n \big)$ depends on both the boundary regularity and the reach of boundary $\partial \mathbf{R}_{h_0}^n$. 
Hence in Section \ref{sub:ER}, we show that the reach of the boundary of a uniformly $C^3$ domain depends on its boundary regularity and the domain $B^\Omega_\rho(z_0)$ can be viewed as being contained in a perturbed $C^3$ half space with small perturbation whose boundary regularity is determined by the regularity of $\Gamma$. In Section \ref{sub:BMP}, we establish necessary inequalities with explicit constant dependency for domains $B_\rho(x)$ and $B^\Omega_\rho(z_0)$. Chapter \ref{sec:3} gives the proof to Theorem \ref{MT}. In Section \ref{sub:IHT}, we reprove an extension theorem for $BMOL^r$ in a uniform domain and use it to establish the domain version $BMO-L^r$ interpolation inequality. In Section \ref{sub:IE} and \ref{sub:EUB}, we establish the interior $BMO$ estimate and the $vBMO$ estimate up to the boundary for $f_0, \nabla p$. In Section \ref{sub:CSoGr}, we give characterizations to intersections of $vBMO^{\infty,\infty}(\Omega)$ with $\widetilde{L}^{2n}_\sigma(\Omega)$ and $\widetilde{G}^{2n}(\Omega)$.
%%%%%%

%%%%%%
\section{Domain dependency of some basic inequalities} % Section 2
\label{sec:2}
\subsection{Localization tool} \label{sub:LT}
Throughout this paper, we denote $x' := (x_1, x_2, ... , x_{n-1})$ for $x \in \mathbf{R}^n$ and $\nabla' := (\partial_1, \partial_2, ... , \partial_{n-1})$.
Let us firstly recall some well-known details about a uniformly $C^k$ domain $\Omega \subset \mathbf{R}^n$ for $k \in \mathbf{N}$, see e.g. \cite[Section I.3.2]{HSo}.
That is to say, there exist constants $\alpha, \beta, K>0$ such that for each $z_0 \in \Gamma$, up to translations and rotations, there exists a function $h_{z_0} \in C^k\big( B_\alpha(0') \big)$, where $B_\alpha(0')$ denotes the open ball $B_\alpha(0')$ in $\mathbf{R}^{n-1}$ of radius $\alpha$ with center $0'$, satisfying the following properties:
\begin{enumerate} 
\item[(i)] 
\[
\sup_{0 \leq s \leq k} \big\| (\nabla')^s h_{z_0} \big\|_{L^\infty\big( B_\alpha(0') \big)} \leq K; \quad \big( \nabla' h_{z_0} \big) (0')=0', \quad h_{z_0}(0')=0,
\]
\item[(i\hspace{-1pt}i)] $\Omega \cap U_{\alpha, \beta, h_{z_0}}(z_0)=\left\{ (x',x_n)\in\mathbf{R}^n \bigm| h_{z_0}(x')<x_n<h_{z_0}(x')+\beta,\ |x'|<\alpha \right\}$ where
\[
	U_{\alpha, \beta, h_{z_0}}(z_0) := \left\{ (x',x_n)\in\mathbf{R}^n \bigm| h_{z_0}(x')-\beta < x_n < h_{z_0}(x')+\beta,\ |x'|<\alpha \right\},
\]
\item[(i\hspace{-1pt}i\hspace{-1pt}i)] $\Gamma\cap U_{\alpha, \beta, h_{z_0}}(z_0)=\left\{ (x',x_n)\in\mathbf{R}^n \bigm| x_n = h_{z_0}(x'),\ |x'|<\alpha \right\}$.
\end{enumerate}
We say that $\Omega$ is of type $(\alpha, \beta, K)$; see Figure \ref{Ftyp}. 
Since requiring $\beta$ to be sufficiently small does not affect values of $\alpha$ and $K$, without loss of generality, within this paper we may assume that $\beta< \mathrm{min} \{ \alpha, \frac{2}{nK} \}$.
\begin{figure}[htb]
\centering
\includegraphics[width=7.5cm]{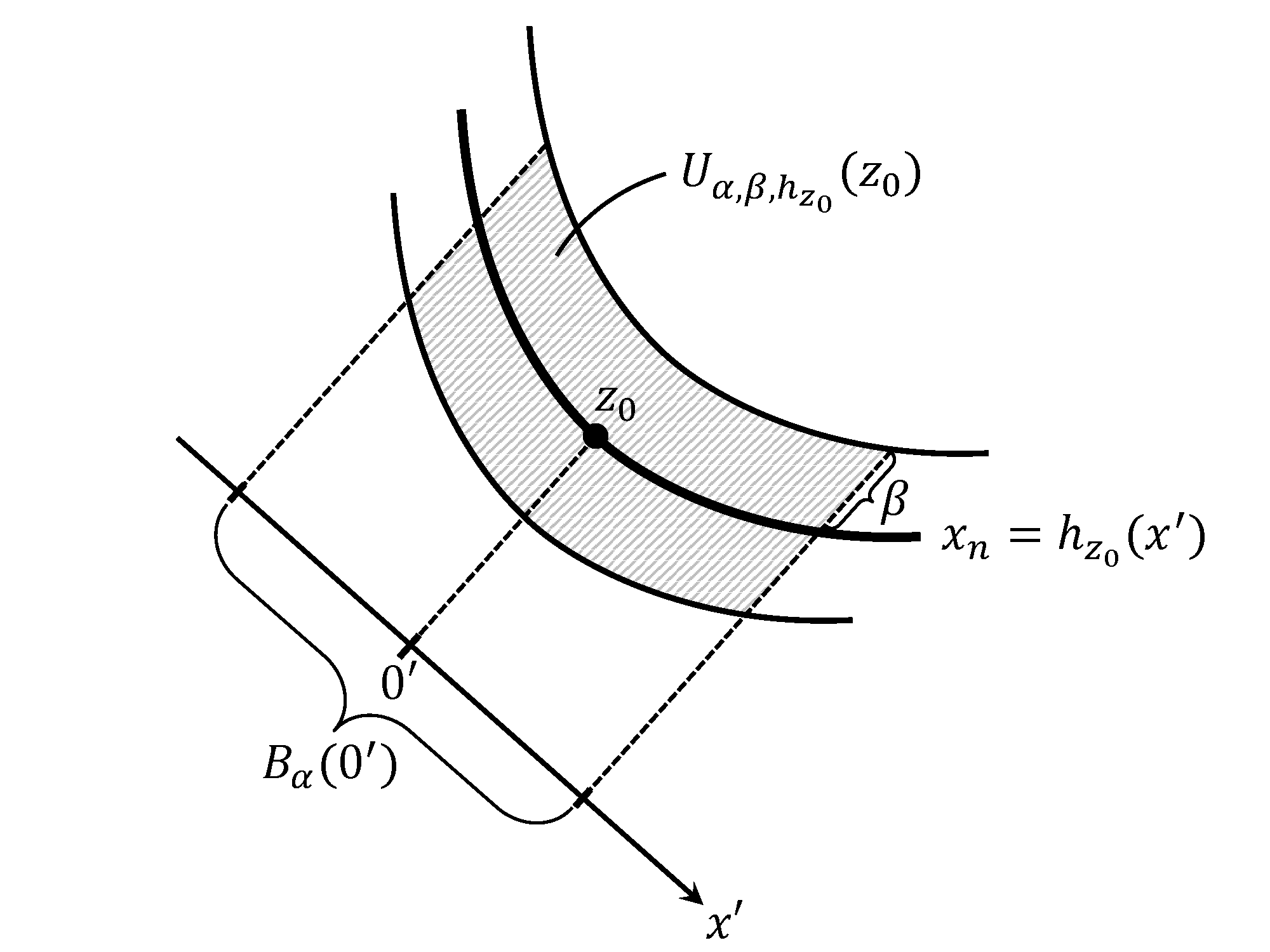}
\caption{$U_{\alpha,\beta,h_{z_0}}(z_0)$} \label{Ftyp}
\end{figure}

Let $d$ denote the signed distance function from $\Gamma$ which is defined by
\begin{equation} % (4)
	d(x) := \left \{
\begin{array}{r}
	\displaystyle \inf_{y\in\Gamma}|x-y| \quad\text{for}\quad x\in\Omega, \\
	\displaystyle -\inf_{y\in\Gamma}|x-y| \quad\text{for}\quad x\notin\Omega 
\end{array}
	\right.
\end{equation}
%
% 原稿2-2 / 10
so that $d(x)=d_\Gamma(x)$ for $x\in\Omega$.
For a uniformly $C^k$ domain $\Omega$, there is $R^\Omega>0$ such that for $x \in \Omega$ with $\left|d(x)\right|<R^\Omega$, there is a unique point $\pi x \in \Gamma$ such that $|x-\pi x|=\left|d(x)\right|$.
The supremum of such $R^\Omega$ is called the reach of $\Gamma$ in $\Omega$, we denote this supremum by $R_\ast^\Omega$.
Let $\Omega^{\mathrm{c}}$ be the complement of $\Omega$ in $\mathbf{R}^n$. Similarly, there is $R^{\Omega^{\mathrm{c}}}>0$ such that for $x \in \Omega^{\mathrm{c}}$ with $\left|d(x)\right| < R^{\Omega^{\mathrm{c}}}$, we can also find a unique point $\pi x \in \Gamma$ such that $|x-\pi x|=\left|d(x)\right|$.
The supremum of such $R^{\Omega^{\mathrm{c}}}$ is called the reach of $\Gamma$ in $\Omega^{\mathrm{c}}$, we denote this supremum by $R_\ast^{\Omega^{\mathrm{c}}}$. We then define 
\[
R_* := \min \left( R^\Omega_*, R^{\Omega^{\mathrm{c}}}_* \right),
\]
which we call it the reach of $\Gamma$.
Moreover, $d$ is $C^k$ in the $\rho$-neighborhood of $\Gamma$ for any $\rho \in (0,R_\ast)$, i.e., $d \in C^k\left(\Gamma^{\mathbf{R}^n}_\rho\right)$ for any $\rho \in (0,R_\ast)$ with
\[
\Gamma^{\mathbf{R}^n}_\rho := \left\{ x \in \mathbf{R}^n \bigm| \left| d(x) \right| < \rho \right\};
\]
see e.g. \cite[Chap.\ 14, Appendix]{GT}, \cite[\S 4.4]{KP}.

We next consider $\Omega$ as a uniformly $C^k$ domain with $k \geq 3$.
For $z_0 \in \Gamma$ and $\rho \in (0, R_\ast)$, we set
\[
U_\rho(z_0) := \left\{ x \in \Omega \bigm| (\pi x)' \in B_\rho(0'), d_\Gamma(x) < \rho \right\},
\]
where $\pi x$ denotes the projection of $x$ on $\Gamma$; see Figure \ref{Fnb}.
Let $F_0: V_\rho := B_\rho(0') \times (0,\rho) \to U_\rho(z_0)$
\begin{eqnarray} \label{NCC}
x = F_0(\eta) = \left\{
\begin{array}{lcl}
\eta' + \eta_n \nabla'd ( \eta', h_{z_0}(\eta') ); \\
h_{z_0}(\eta') + \eta_n \partial_{x_n} d ( \eta', h_{z_0}(\eta') )
\end{array}
\right.
\end{eqnarray}
be the normal coordinate change in $U_\rho(z_0)$. For any $\varepsilon \in (0,1)$, there is a constant $c_\varepsilon^\Omega$ which depends on $\Omega$ and $\varepsilon$ only, such that for any $\rho \in (0, c_\varepsilon^\Omega)$ and $z_0 \in \Gamma$, the estimates
\begin{equation} \label{UCG}
\begin{split}
\| \nabla F_0 - I \|_{L^\infty(V_\rho)} &< \varepsilon, \\
\| \nabla F_0^{-1} - I \|_{L^\infty\big( U_\rho(z_0) \big)} &< \varepsilon
\end{split}
\end{equation}
hold simultaneously \cite[Proposition 3]{Gu}.
Therefore, by Hadamard's inverse function theorem, see e.g. \cite[\S 6.2]{KP}, the $C^{k-1}$ mapping $F_0: V_\rho \to U_\rho(z_0)$ is indeed a $C^2$ diffeomorphism. Note that $\nabla F_0^{-1}$ is actually $\frac{1}{\mathrm{det}(\nabla F_0 )} \cdot \mathrm{adj(\nabla F_0)}$ where $\mathrm{adj}(\nabla F_0)$ denotes the adjugate of $\nabla F_0$. By considering the relation
\[
\big( \partial_{x_i} \nabla F_0^{-1} \big)(x) = \sum_{j=1}^n \big( \nabla F_0^{-1} \big)_{ij}(x) \cdot \left\{ \frac{\partial}{\partial \eta_j} \left( \frac{\mathrm{adj(\nabla F_0)} }{\mathrm{det}(\nabla F_0 )} \right) \right\}(\eta),
\]
by estimates (\ref{UCG}) we can deduce that there exists a constant $C=C(K)>0$ such that
\begin{align} \label{C2EN}
\| F_0 \|_{C^2(V_\rho)} + \| F_0^{-1} \|_{C^2\big( U_\rho(z_0) \big)} \leq C.
\end{align}

For $\rho \in (0,R_\ast)$, we set $\Omega_\rho := \Omega \setminus \overline{\Gamma_\rho^{\mathbf{R}^n}}$.
In order to localize the problem, we introduce two types of cut-off functions. one type are supported in a small ball away from the boundary whereas the other type are supported in a small neighborhood of the boundary.

\begin{proposition} \label{CFB}
Let $\Omega \subset \mathbf{R}^n$ be a uniformly $C^1$ domain and $R_\ast$ be the reach of boundary $\Gamma = \partial \Omega$.
Let $\varepsilon \in (0, \frac{R_\ast}{3})$ and $x \in \overline{\Omega_{3 \varepsilon}}$. There exists $\varphi_x \in C_{\mathrm{c}}^\infty(\mathbf{R}^n)$ which satisfies the following conditions:
\begin{itemize}
\item $0 \leq \varphi_x \leq 1$,

\item $\operatorname{supp} \varphi_x \subseteq \overline{B_{\frac{3 \varepsilon}{2}}(x)}$,

\item $\varphi_x = 1$ in $B_\varepsilon(x)$,

\item $\| \varphi_x \|_{C^2(\mathbf{R}^n)} \leq C \varepsilon^{-2}$ where $C$ is a constant independent of $x \in \overline{\Omega_{3 \varepsilon}}$.
\end{itemize}
\end{proposition}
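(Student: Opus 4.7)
The plan is to produce $\varphi_x$ by rescaling and translating a single fixed bump function; no geometric information about $\Omega$ enters the construction, since $\varphi_x$ is required merely to live in $C_c^\infty(\mathbf{R}^n)$. The role of the hypothesis $x\in\overline{\Omega_{3\varepsilon}}$ is only to guarantee that the resulting support $\overline{B_{3\varepsilon/2}(x)}$ sits inside $\Omega$, so that $\varphi_x$ will be a legitimate interior localizer for $f_0$ and $\nabla p$ in later sections; it plays no role in the existence proof itself.

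Concretely, I would first fix, once and for all, a radially symmetric function $\psi\in C_c^\infty(\mathbf{R}^n)$ with $0\le\psi\le 1$, $\psi\equiv 1$ on $B_1(0)$, and $\operatorname{supp}\psi\subseteq\overline{B_{3/2}(0)}$. Such a $\psi$ is standard: take, for instance, $\psi(y)=\eta(|y|)$ for a smooth one-variable bump $\eta$ equal to $1$ on $[0,1]$ and vanishing on $[3/2,\infty)$, or mollify $\mathbf{1}_{B_{5/4}(0)}$ with a mollifier of radius $1/4$. Set $C_0:=\|\psi\|_{C^2(\mathbf{R}^n)}$, which is a fixed universal constant.

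Next, I would define
\[
\varphi_x(y) := \psi\bigl(\varepsilon^{-1}(y-x)\bigr), \qquad y\in\mathbf{R}^n.
\]
The four required properties are then immediate. The two-sided bound $0\le\varphi_x\le 1$ transfers from $\psi$. The affine change of variables $y\mapsto\varepsilon^{-1}(y-x)$ maps $B_{3\varepsilon/2}(x)$ to $B_{3/2}(0)$ and $B_\varepsilon(x)$ to $B_1(0)$, so the support and value-one conditions follow. For the $C^2$-estimate, the chain rule gives
\[
\partial_i\varphi_x(y)=\varepsilon^{-1}(\partial_i\psi)\bigl(\varepsilon^{-1}(y-x)\bigr),\qquad \partial_i\partial_j\varphi_x(y)=\varepsilon^{-2}(\partial_i\partial_j\psi)\bigl(\varepsilon^{-1}(y-x)\bigr),
\]
whence $\|\varphi_x\|_{L^\infty(\mathbf{R}^n)}\le 1$, $\|\nabla\varphi_x\|_{L^\infty(\mathbf{R}^n)}\le C_0\varepsilon^{-1}$, and $\|\nabla^2\varphi_x\|_{L^\infty(\mathbf{R}^n)}\le C_0\varepsilon^{-2}$. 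Summing these three contributions and using the upper bound $\varepsilon<R_\ast/3$ to absorb the lower-order terms, one obtains $\|\varphi_x\|_{C^2(\mathbf{R}^n)}\le C\varepsilon^{-2}$ with a constant $C$ depending only on $C_0$ and $R_\ast$, and in particular independent of $x$.

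There is no genuine obstacle here: the construction is a one-line affine rescaling and the content is purely bookkeeping. The only mild subtlety is that, strictly speaking, to fold the $L^\infty$ and first-derivative contributions into a bound of the form $C\varepsilon^{-2}$ one needs an upper bound on $\varepsilon$, which is furnished by the standing assumption $\varepsilon\in(0,R_\ast/3)$. The interesting cut-off in this section will be $\varphi_{z_0}$ in the boundary case, whose construction must in addition arrange $\nabla d\cdot\nabla\varphi_{z_0}=0$ for points with $d_\Gamma<3\varepsilon$ and so will rely on the normal coordinate change $F_0$ from \eqref{NCC}; no such constraint is imposed in the interior, and the plain Euclidean rescaling above suffices.
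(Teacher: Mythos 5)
Your proposal is correct and takes essentially the same route as the paper: the paper also fixes a one-dimensional bump $\phi$ (built from $\zeta(t)=\exp(-1/t)$ truncated at $0$) with $\phi\equiv 1$ on $[-1,1]$ and $\operatorname{supp}\phi\subseteq[-3/2,3/2]$, then sets $\varphi_x(y)=\phi(|y-x|/\varepsilon)$ and reads off the $C^2$ bound from the chain rule, exactly as you do with your $\psi$. The only detail worth flagging is that your remark about needing $\varepsilon<R_\ast/3$ to fold the zeroth- and first-order contributions into $C\varepsilon^{-2}$ is a genuine (if small) point that the paper glosses over with ``a direct calculation shows''; in the paper's actual applications $\varepsilon$ is bounded above by an explicit quantity $<1$ fixed at the start of Chapter~3, so the absorption is harmless, and in any case the proposition only asserts independence of the constant from $x$, which both arguments deliver.
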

\begin{proof}
Let us recall a well-known construction of a $1$ dimensional cut-off function which is widely used in various contents, see e.g. \cite[Lemma 2.20 and Lemma 2.21]{Lee}. We consider
\begin{align*} 
\phi(t) := \frac{\zeta(\frac{9}{4}-t^2)}{\zeta(t^2-1) + \zeta(\frac{9}{4}-t^2)}, \quad t \in \mathbf{R}
\end{align*}
with
\begin{eqnarray} \label{CFZ}
\zeta(t) :=
\left\{
\begin{array}{lcl}
\mathrm{exp} ( - \frac{1}{t} ) \quad \text{for} \quad t > 0, \\
0 \quad \quad \quad \quad \, \text{for} \quad t \leq 0.
\end{array}
\right.
\end{eqnarray}
In our case, an easy check tells us that $\phi \in C_{\mathrm{c}}^\infty(\mathbf{R})$ which satisfies $\phi(t) = 1$ for $|t| \leq 1$ and $\phi(t) = 0$ for $|t| \geq 3/2$. For $x \in \overline{\Omega_{3 \varepsilon}}$, we let 
\[
\varphi_x(y) := \phi \big( \frac{|y-x|}{\varepsilon} \big), \quad y \in \mathbf{R}^n.
\]
It is obvious that $0 \leq \varphi_x \leq 1$, $\operatorname{supp} \varphi_x \subseteq \overline{B_{\frac{3 \varepsilon}{2}}(x)}$ and $\varphi_x =1$ in $B_\varepsilon(x)$. A direct calculation shows that
\[
\| \varphi_x \|_{C^2(\mathbf{R}^n)} \leq C(n) \| \phi \|_{C^2(\mathbf{R})} \cdot \varepsilon^{-2}.
\]
\end{proof}
\begin{proposition} \label{CFI}
Let $\Omega \subset \mathbf{R}^n$ be a uniformly $C^3$ domain and $R_\ast$ be the reach of boundary $\Gamma = \partial \Omega$.
Let $0<\varepsilon< \mathrm{min} \{ \frac{R_\ast}{5}, \frac{c_{1/2}^\Omega}{5} \}$ and $z_0 \in \Gamma$. There exists $\varphi_{z_0} \in C^2(\Omega)$ which satisfies the following conditions:
\begin{itemize}
\item $0 \leq \varphi_{z_0} \leq 1$,

\item $\operatorname{supp} \varphi_{z_0} \subseteq \overline{U_{4 \varepsilon}(z_0)}$,

\item $\varphi_{z_0} = 1$ in $U_{3 \varepsilon}(z_0)$,

\item $\| \varphi_{z_0} \|_{C^2(\overline{\Omega})} \leq C \varepsilon^{-2}$ where $C$ is a constant independent of $z_0 \in \Gamma$,

\item $\nabla d \cdot \nabla \varphi_{z_0} = 0$ in $\Gamma_{3 \varepsilon}^{\mathbf{R}^n}$.
\end{itemize}
\end{proposition}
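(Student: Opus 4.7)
The plan is to build $\varphi_{z_0}$ as a separable cut-off in the straightened $\eta = F_0^{-1}(x)$ coordinates introduced in Section \ref{sub:LT}, and then pull it back to $\Omega$. The hypothesis $\varepsilon<\min\{R_\ast/5,\, c_{1/2}^\Omega/5\}$ ensures that $F_0 : V_{5\varepsilon}\to U_{5\varepsilon}(z_0)$ is a $C^2$-diffeomorphism satisfying (\ref{UCG}) (with parameter $1/2$) and the uniform bound (\ref{C2EN}), so computations may be transported freely between the two coordinate systems.

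The structural observation behind the last property is that in the normal chart the signed distance function is simply $d(F_0(\eta))=\eta_n$. Differentiating gives $\nabla d(F_0(\eta))\cdot\partial_{\eta_j}F_0(\eta)=\delta_{jn}$, so $\partial_{\eta_n}F_0=\nabla d\circ F_0$. For any $\tilde\varphi(\eta)$, setting $\varphi_{z_0}:=\tilde\varphi\circ F_0^{-1}$, the chain rule yields
\[
\partial_{\eta_n}\tilde\varphi(\eta)=\nabla d(x)\cdot\nabla\varphi_{z_0}(x)\qquad\text{at }x=F_0(\eta),
\]
so the constraint $\nabla d\cdot\nabla\varphi_{z_0}=0$ on $\Gamma^{\mathbf{R}^n}_{3\varepsilon}\cap\Omega$ reduces to forcing $\tilde\varphi$ to be independent of $\eta_n$ on the slab $\{\eta_n<3\varepsilon\}$.

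I would then take the one-dimensional cut-off $\phi\in C^\infty_{\mathrm{c}}(\mathbf{R})$ from the proof of Proposition \ref{CFB} (so $\phi\equiv 1$ on $[-3,3]$ and $\operatorname{supp}\phi\subset(-4,4)$), define
\[
\tilde\varphi(\eta):=\phi\!\left(\tfrac{|\eta'|}{\varepsilon}\right)\phi\!\left(\tfrac{\eta_n}{\varepsilon}\right)\quad\text{on }V_{5\varepsilon},
\]
and set $\varphi_{z_0}(x):=\tilde\varphi(F_0^{-1}(x))$ on $U_{5\varepsilon}(z_0)$, extended by zero to all of $\Omega$. Because $\operatorname{supp}\tilde\varphi$ lies compactly inside $V_{5\varepsilon}$ and touches $\partial V_{5\varepsilon}$ only along $\{\eta_n=0\}$ (which corresponds to $\Gamma$), the zero extension is $C^2$. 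The conditions $0\le\varphi_{z_0}\le 1$, $\operatorname{supp}\varphi_{z_0}\subset\overline{U_{4\varepsilon}(z_0)}$, and $\varphi_{z_0}\equiv 1$ on $U_{3\varepsilon}(z_0)$ are immediate from the choice of $\phi$, and the orthogonality to $\nabla d$ holds because $\phi(\eta_n/\varepsilon)\equiv 1$ whenever $\eta_n\le 3\varepsilon$, so $\partial_{\eta_n}\tilde\varphi\equiv 0$ there; outside $U_{5\varepsilon}(z_0)$ the function vanishes identically and the condition is trivial.

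The only quantitative step is the $C^2$-bound: direct computation gives $\|\tilde\varphi\|_{C^2(V_{5\varepsilon})}\le C(n)\|\phi\|_{C^2(\mathbf{R})}\varepsilon^{-2}$, and combining with $\|F_0^{-1}\|_{C^2(U_{5\varepsilon}(z_0))}\le C(K)$ from (\ref{C2EN}) via the chain rule yields $\|\varphi_{z_0}\|_{C^2(\overline{\Omega})}\le C(n,K)\varepsilon^{-2}$, with a constant independent of $z_0\in\Gamma$. The only non-routine ingredient is the identification $\partial_{\eta_n}F_0=\nabla d\circ F_0$, which is exactly what converts the separable cut-off in the flattened coordinates into the required geometric condition in the curvilinear setting; once this is in hand everything else reduces to bookkeeping with (\ref{C2EN}) and the standard $1$-D cut-off of Proposition \ref{CFB}.
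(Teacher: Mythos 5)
Your construction is essentially the same as the paper's: straighten coordinates with $F_0$, take a separable product cut-off in $(\eta',\eta_n)$ that is independent of $\eta_n$ for $\eta_n<3\varepsilon$, pull back by $F_0^{-1}$, and invoke the uniform $C^2$-bound \eqref{C2EN}. You also correctly identify the mechanism behind the last bullet ($d\circ F_0=\eta_n$ so $\partial_{\eta_n}F_0=\nabla d\circ F_0$, whence $\partial_{\eta_n}\tilde\varphi=(\nabla d\cdot\nabla\varphi_{z_0})\circ F_0$); the paper cites the same identity from \cite[Proof of Lemma 5 (i)]{GigaGuMA} rather than rederiving it.

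One small inaccuracy: the bump $\phi$ from Proposition~\ref{CFB} is $1$ on $[-1,1]$ and supported in $[-3/2,3/2]$, not $1$ on $[-3,3]$ with support in $(-4,4)$ as you assert, so $\phi(|\eta'|/\varepsilon)\phi(\eta_n/\varepsilon)$ as written does not have the claimed $3\varepsilon/4\varepsilon$ support structure. You need either to rescale $\phi$ (replace $\phi(\cdot/\varepsilon)$ by $\phi(\cdot/(3\varepsilon))$, or compose with an affine change) or to define a fresh one-dimensional cut-off adapted to the $[3,4]$ transition, which is exactly the $\psi$ the paper introduces. Once that is corrected, the argument is sound; the product structure, the chain-rule identity, and the $C^2$ bound all go through as you describe.
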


\begin{proof}
Similar to the construction of $\phi$ in the proof of Proposition \ref{CFB}, in this case we consider
\begin{align*}
\psi(t) := \frac{\zeta(4-t)}{\zeta(t-3) + \zeta(4-t)}, \quad t \in \mathbf{R}
\end{align*}
where $\zeta$ is the function defined by expression (\ref{CFZ}).
Obviously, $\psi \in C_{\mathrm{c}}^\infty(\mathbf{R})$ which satisfies $\operatorname{supp} \psi \subseteq \overline{B_4(0)}$, $0 \leq \psi \leq 1$ and $\psi = 1$ in $B_3(0)$. 
Let $\mathbf{R}_+^n := \{ (\eta',\eta_n) \in \mathbf{R}^n \bigm| \eta_n > 0 \}$ be the upper half space.
We then define
\[
\varphi_V(\eta',\eta_n) := \psi \bigg( \frac{\eta_n}{\varepsilon} \bigg), \quad \varphi_H(\eta',\eta_n) := \psi \bigg( \frac{|\eta'|}{\varepsilon} \bigg)
\]
for any $(\eta',\eta_n) \in \mathbf{R}_+^n$. Note that for $0< \eta_n, \xi_n < 3 \varepsilon$ with $\eta_n \neq \xi_n$, we have that $\varphi_V(\eta',\eta_n) = \varphi_V(\eta',\xi_n) = 1$ and for any $|\eta'|<3 \varepsilon$, we have that $\varphi_H(\eta',\eta_n) = 1$.
By setting
\[
\varphi_\ast(\eta',\eta_n) := \varphi_V(\eta',\eta_n) \cdot \varphi_H(\eta',\eta_n)
\]
for any $(\eta',\eta_n) \in \mathbf{R}_+^n$, our desired $\varphi_{z_0}$ can be constructed as 
\[
\varphi_{z_0}(x) := \big( \varphi_\ast \circ F_0^{-1} \big)(x), \quad x \in U_{4 \varepsilon}(z_0).
\]
It is obvious that $0 \leq \varphi_{z_0} \leq 1$, $\operatorname{supp} \varphi_{z_0} \subseteq \overline{U_{4 \varepsilon}(z_0)}$ and $\varphi_{z_0} = 1$ in $U_{3 \varepsilon}(z_0)$. Since estimate (\ref{C2EN}) provide a uniform control on $\| F_0^{-1} \|_{C^2\big( U_{4 \varepsilon}(z_0) \big)}$, we have that
\[
\| \varphi_{z_0} \|_{C^2\big( U_{4 \varepsilon}(z_0) \big)} \leq C(K,n) \varepsilon^{-2}.
\]
Since by our construction we have that $\varphi_\ast(\eta',\eta_n) = \varphi_\ast(\eta',\xi_n)$ for any $0<\eta_n, \xi_n < 3 \varepsilon$ such that $\eta_n \neq \xi_n$, the fact that
\[
(\nabla_x d) \circ F_0 \cdot (\nabla_x \varphi_{z_0}) \circ F_0 = \partial_{\eta_n} \big( \varphi_{z_0} \circ F_0 \big);
\]
see e.g. \cite[Proof of Lemma 5 (i)]{GigaGuMA}, would imply that $\nabla d \cdot \nabla \varphi_{z_0} = 0$ in $\Gamma_{3 \varepsilon}^{\mathbf{R}^n}$.
\end{proof}
%%%

%%%
\subsection{Geometric properties of the intersection of $\Omega$ with a ball centered on $\Gamma$}
\label{sub:GPI}

In this section, let $\Omega \subset \mathbf{R}^n$ be a uniformly $C^2$ domain of type $(\alpha, \beta, K)$ with boundary $\Gamma = \partial \Omega$. Let $R_\ast$ be the reach of $\Gamma$.
For $z_0 \in \Gamma$ and $\rho \in (0,R_\ast)$, we denote the intersection of $\Omega$ with the ball $B_\rho(z_0)$ by $B^\Omega_\rho(z_0)$, i.e., $B^\Omega_\rho(z_0) := B_\rho(z_0) \cap \Omega$.

Let us further recall that a bounded domain $D \subset \mathbf{R}^n$ is said to be a Lipschitz domain if there exist constants $\alpha_0, \beta_0, K_0 > 0$ such that for each $w_0 \in \partial D := \overline{D} \setminus D$, up to translations and rotations, there exists a Lipschitz function $h_{w_0} \in C^{0,1}\big( B_{\alpha_0}(0') \big)$ satisfying
\begin{enumerate} 
\item[(i)] 
\[
\| h_{w_0} \|_{C^{0,1}\big( B_{\alpha_0}(0') \big)} \leq K_0, \quad h_{w_0}(0')=0,
\]
\item[(i\hspace{-1pt}i)] $D \cap U_{\alpha_0, \beta_0, h_{w_0}}(w_0)=\left\{ (x',x_n) \in \mathbf{R}^n \bigm| h_{w_0}(x')<x_n<h_{w_0}(x')+\beta_0,\ |x'|<\alpha_0 \right\}$ where
\[
U_{\alpha_0, \beta_0, h_{w_0}}(w_0) := \left\{ (x',x_n) \in \mathbf{R}^n \bigm| h_{w_0}(x')-\beta_0 < x_n < h_{w_0}(x')+\beta_0,\ |x'|<\alpha_0 \right\},
\]
\item[(i\hspace{-1pt}i\hspace{-1pt}i)] $\partial D \cap U_{\alpha_0, \beta_0, h_{w_0}}(w_0)=\left\{ (x',x_n) \in \mathbf{R}^n \bigm| x_n = h_{w_0}(x'),\ |x'|<\alpha_0 \right\}$;
\end{enumerate}
see e.g. \cite[Section I.3.2]{HSo}. We call the constant $K_0$ to be the Lipschitz constant for boundary $\partial D$.

\begin{lemma} \label{BOL}
Let $z_0 \in \Gamma$ and $0<\rho<\mathrm{min} \{(96nK+4)^{-1}, \alpha, \beta, R_\ast\}$. Then $B^\Omega_\rho(z_0)$ is a bounded Lipschitz domain with Lipschitz constant $L=L(K)>0$.
\end{lemma}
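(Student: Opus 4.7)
The plan is to work in the graph chart provided by the uniform $C^2$ structure of $\Omega$ at $z_0$ and verify the Lipschitz property directly by a case analysis on the type of boundary point. After translating and rotating so that $z_0=0$, write $h:=h_{z_0}$, so that $h(0')=0$, $\nabla' h(0')=0'$, and the $C^2$-norm of $h$ on $B_\alpha(0')$ is bounded by $K$; Taylor's theorem then gives $|h(x')|\leq C(n)K|x'|^2$ and $|\nabla' h(x')|\leq C(n)K|x'|$ on $B_\rho(0')$. The bounds $\rho<\min\{\alpha,\beta\}$ ensure $B_\rho(0)\subset U_{\alpha,\beta,h}(0)$, so
\[
B^\Omega_\rho(0)=\{x\in\mathbf{R}^n : |x|^2<\rho^2,\ x_n>h(x')\},
\]
whose boundary splits into $\Gamma_1=\{|x|\leq\rho,\ x_n=h(x')\}$ (a piece of $\Gamma$) and $\Gamma_2=\{|x|=\rho,\ x_n\geq h(x')\}$ (a spherical cap) meeting along the seam $\Sigma=\Gamma_1\cap\Gamma_2$.

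Away from the seam the Lipschitz structure is essentially free: a point of $\Gamma_1\setminus\Sigma$ uses the graph $x_n=h(x')$ directly, with Lipschitz constant at most $K$, while a point of $\Gamma_2\setminus\Sigma$ admits a smooth local chart on the sphere with constants depending only on the ambient dimension. The main obstacle is to build a Lipschitz chart at a seam point $w_0=(y',h(y'))$ with $|y'|^2+h(y')^2=\rho^2$, where the two boundary pieces meet. The two outward unit normals of $\partial B^\Omega_\rho(0)$ there are
\[
n_1=\frac{(\nabla' h(y'),-1)}{\sqrt{1+|\nabla' h(y')|^2}}, \qquad n_2=\frac{w_0}{\rho},
\]
and a direct estimate using the Taylor bounds gives
\[
|n_1\cdot n_2|\leq\frac{|y'\cdot\nabla' h(y')|+|h(y')|}{\rho}\leq C(n)K\rho.
\]
The assumption $\rho<(96nK+4)^{-1}$ is precisely what forces this inner product to be strictly smaller than an explicit absolute constant less than $1$, expressing a quantitative transversality of the two boundary pieces at $w_0$ with a lower bound on the angle depending only on $K$.

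Given this transversality, the unit bisector $e:=-(n_1+n_2)/|n_1+n_2|$ is well defined and satisfies $e\cdot n_1=e\cdot n_2\leq-c_0(K)$ for an explicit $c_0(K)>0$. After a further rotation sending $e$ to $-e_n$, both $\Gamma_1$ and $\Gamma_2$ can be expressed on a common neighborhood of $w_0$ as graphs $x_n=g_1(x')$ and $x_n=g_2(x')$ with Lipschitz constants controlled by $\sqrt{1-c_0^2}/c_0$; on that neighborhood $B^\Omega_\rho(0)$ coincides with the subgraph $\{x_n<\min(g_1,g_2)\}$. Since the pointwise minimum of Lipschitz functions is Lipschitz with the same constant, this yields a Lipschitz chart at $w_0$ with constant depending only on $K$. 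Combining the three types of charts by a finite covering (whose chart radii can also be made to depend only on $K$) produces the required uniform Lipschitz representation of $\partial B^\Omega_\rho(z_0)$, completing the proof with $L=L(K)$. The genuinely nontrivial ingredient is the transversality estimate at the seam and the resulting local graph construction; everything else is routine.
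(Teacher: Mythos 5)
Your proposal is correct, and it shares with the paper the crucial geometric idea: at a seam point $w_0$ where $\Gamma$ meets $\partial B_\rho(z_0)$, the reference hyperplane for the Lipschitz chart should be the one orthogonal to the bisector of the two unit normals, and the hypothesis $\rho<(96nK+4)^{-1}$ guarantees via $|n_1\cdot n_2|\le C(n)K\rho$ that this bisector is uniformly transversal to both boundary pieces. From there, however, the two arguments diverge. You argue structurally: rotate so the bisector is vertical, write $\Omega$ and $B_\rho$ separately as Lipschitz sub-epigraphs $\{x_n<g_1\}$ and $\{x_n<g_2\}$ (each Lipschitz constant bounded by $\sqrt{1-c_0^2}/c_0$), and observe that their intersection is the sub-epigraph of $\min(g_1,g_2)$, which is automatically Lipschitz with the same constant. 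The paper instead proves directly that the orthogonal projection $\pi_I$ onto the bisector hyperplane is injective on a small piece of $\partial B^\Omega_\rho(z_0)$ (a case analysis over pairs of boundary points on $\Gamma$, on $\partial B_\rho$, or one on each), then derives explicit Lipschitz bounds (estimates (\ref{LEG}) and (\ref{LEB})) for each piece separately, and finally combines them for mixed pairs via an intermediate seam point lying on the projected segment. Your min-of-graphs argument subsumes both the injectivity check and the intermediate-point argument, so it is shorter, and it also yields a chart radius of order $\rho$ rather than the $\rho^2$ obtained in the paper. One small inaccuracy in your write-up: the parenthetical claim that the chart radii can be made to depend only on $K$ is not correct — since $\partial B_\rho(z_0)$ has curvature $1/\rho$, the graph chart radius at a point of the spherical cap (and at a seam point) necessarily scales like $\rho$; this does not affect the conclusion, because the lemma asserts only that the Lipschitz constant $L$ depends on $K$ alone, not that $\alpha_0,\beta_0$ do.
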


Lemma \ref{BOL} is intuitively clear at least qualitatively. However, we give a detailed proof since it seems that there is no explicit proof in the literature.
Let $z_0 \in \Gamma$ and $\rho<\mathrm{min} \, \{ \alpha, \beta, R_\ast\}$.
Before we start to prove anything, we would like to firstly clarify some concepts that are necessary to understand the notations in this proof.
Let $w_0 \in \Gamma \cap \partial B_\rho(z_0)$.
Here we mean that $w_0 = \big( w_0', (w_0)_n \big)$ is the coordinate of the point $w_0$ in the local coordinate system in $z_0$.
Without loss of generality, we assume that the coordinate system in $z_0$ is generated by the base of unit normal vectors $\{e_1, e_2, ... , e_n \}$ where the $j$-th component of $e_i$ is $\delta_{ij}$ for all $1 \leq i,j \leq n$.
Let $(e_n)^\Omega$ be the unit normal vector at $w_0$ with respect to boundary $\Gamma$, i.e., we have that in this case
\begin{align*} 
(e_n)^\Omega = \left( - \frac{\nabla' h_{z_0}(w_0')}{\big( 1 + \big| \nabla' h_{z_0}(w_0') \big|^2 \big)^{\frac{1}{2}}}, \, \frac{1}{\big( 1 + \big| \nabla' h_{z_0}(w_0') \big|^2 \big)^{\frac{1}{2}}} \right).
\end{align*}
Let $(e_n)^B$ be the unit normal vector at $w_0$ with respect to boundary $\partial B_\rho(z_0)$, i.e., we have that in this case
\begin{align*} 
(e_n)^B = - \left( \frac{w_0'}{\rho}, \, \frac{(w_0)_n}{\rho} \right) = - \left( \frac{w_0'}{\rho}, \, \frac{h_{z_0}(w_0')}{\rho} \right).
\end{align*}
We then set 
\[
(e_n)^I := \frac{(e_n)^\Omega + (e_n)^B}{\big| (e_n)^\Omega + (e_n)^B \big|}
\]
and pick a base of orthonormal vectors $\{(e_1)^I, (e_2)^I, ... , (e_{n-1})^I\}$ which generates the tangent plane $\mathrm{P}_I(w_0)$ at $w_0$ with respect to normal $(e_n)^I$; see Figure \ref{Ftan}.
We shall prove that with $(e_n)^I$ representing the direction of the $n$-axis, this base $\{(e_1)^I, (e_2)^I, ... , (e_n)^I\}$ is indeed the coordinate system in $w_0$ that we are seeking. 
\begin{figure}[htb]
\centering
\includegraphics[width=8.0cm]{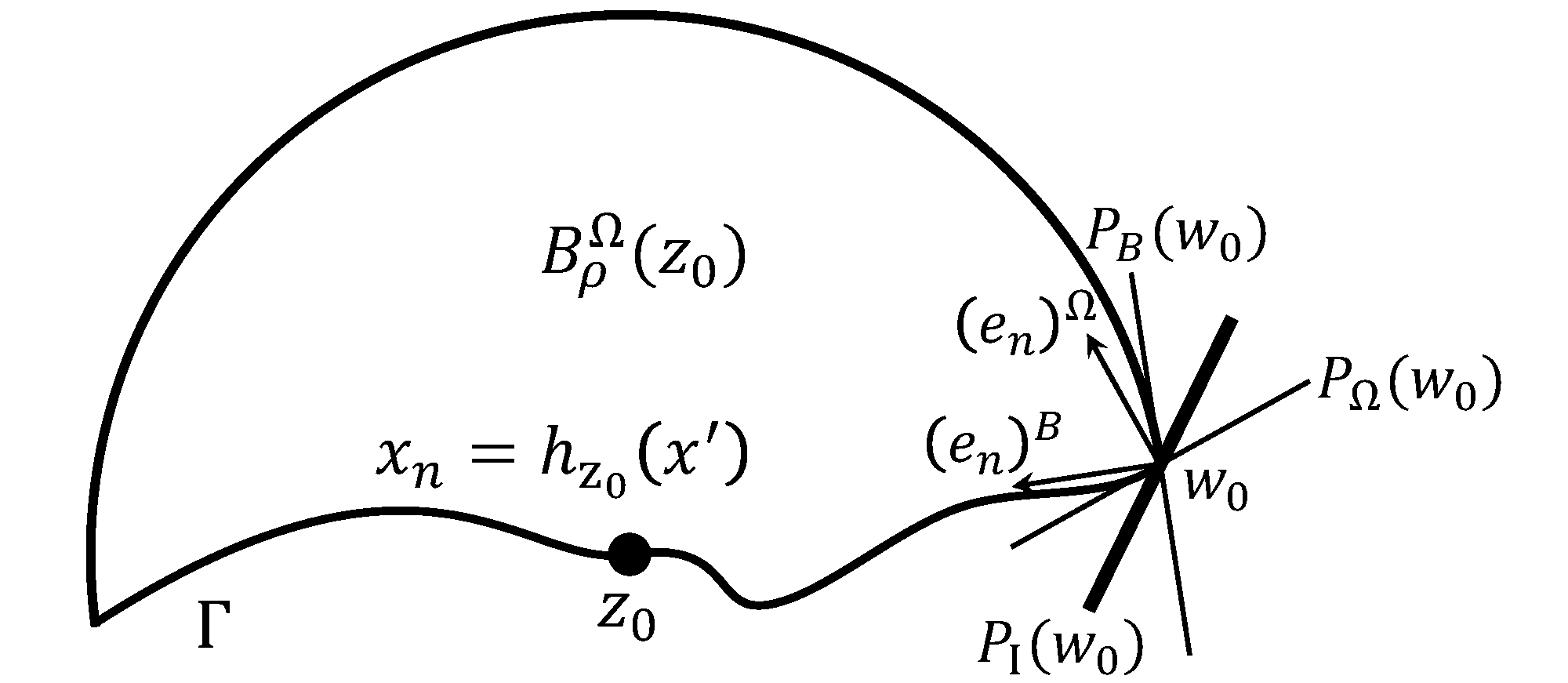}
\caption{$P_I(w_0)$} \label{Ftan}
\end{figure}

In addition, we pick two more bases of orthonormal vectors, say $\{(e_1)^\Omega, (e_2)^\Omega, ... , (e_{n-1})^\Omega\}$ and $\{(e_1)^B, (e_2)^B, ... , (e_{n-1})^B\}$, which generates the tangent plane $\mathrm{P}_\Omega(w_0)$ at $w_0$ with respect to normal $(e_n)^\Omega$ and the tangent plane $\mathrm{P}_B(w_0)$ at $w_0$ with respect to normal $(e_n)^B$ respectively. Within this proof, for a point $u \in \mathbf{R}^n$, we say that $u = (u_1,u_2, ... , u_n)$ is the coordinate of $u$ in the coordinate system in $z_0$, $u^\Omega = (u^\Omega_1, u^\Omega_2, ... , u^\Omega_n)$ is the coordinate of $u$ in the coordinate system in $w_0$ where $(e_n)^\Omega$ represents the direction of the $n$-axis, $u^B=(u^B_1,u^B_2, ... ,u^B_n)$ is the coordinate of $u$ in the coordinate system in $w_0$ where $(e_n)^B$ represents the direction of the $n$-axis and $u^I=(u^I_1,u^I_2, ... ,u^I_n)$ is the coordinate of $u$ in the coordinate system in $w_0$ where $(e_n)^I$ represents the direction of the $n$-axis. 
Since $u$ is nothing but the same point in different coordinate system, we have that
\begin{align} \label{CUD}
u = (u_1,u_2, ... , u_n) = w_0 + \sum_{i=1}^n u^\Omega_i \cdot (e_i)^\Omega = w_0 + \sum_{i=1}^n u^B_i \cdot (e_i)^B = w_0 + \sum_{i=1}^n u^I_i \cdot (e_i)^I.
\end{align}
\begin{proof}[Proof of Lemma \ref{BOL}]
Let $\pi_I : \mathbf{R}^n \to P_I(w_0)$ be the projection of points in $\mathbf{R}^n$ onto the tangent plane $P_I(w_0)$, i.e., for any $u \in \mathbf{R}^n$ we have that either $u - \pi_I(u) = |u-\pi_I(u)| (e_n)^I$ or $u - \pi_I(u) = - |u-\pi_I(u)| (e_n)^I$.
Since $\big( \pi_I(u) \big)^I = (u^{I,'},0)$ where $u^{I,'} := (u^I_1, u^I_2, ... , u^I_{n-1})$, for any $z,y \in \mathbf{R}^n$ in the coordinate system in $z_0$, coordinate equivalence relation (\ref{CUD}) tells us that
\[
|\pi_I(z) - \pi_I(y)| = |z^{I,'} - y^{I,'}| \leq |z-y|,
\]
which means that $\pi_I : \mathbf{R}^n \to P_I(w_0)$ is a Lipschitz continuous map.

We claim that $\pi_I : B^\partial_{\rho^2}(w_0) \setminus \Gamma \to \pi_I\big( B^\partial_{\rho^2}(w_0) \setminus \Gamma \big)$ is injective where $B^\partial_{\rho^2}(w_0) := \{ u \in \partial B^\Omega_\rho(z_0) \bigm| |u-w_0|<\rho^2 \}$.
Let us consider the equation $\big| y + \ell (e_n)^I \big| = \rho$ for $y \in B^\partial_{\rho^2}(w_0) \setminus \Gamma$ and $\ell \in (-\infty,\infty)$. Then, we have that
\[
|y + \ell (e_n)^I|^2 = |y|^2 + 2 \ell \big( y \cdot (e_n)^I \big) + \ell^2 = \rho^2,
\]
which further implies that either $\ell = 0$ or 
\[
\ell = - \frac{2 y \cdot \big( (e_n)^\Omega + (e_n)^B \big)}{\big| (e_n)^\Omega + (e_n)^B \big|} 
\]
as $|y| = \rho$.
The mean value theorem tells us that $|\nabla' h_{z_0} (w_0')| \leq nK |w_0'|$ and $| h_{z_0}(w_0')| \leq nK |w_0'|^2$.
Hence, by making use of the explicit expressions for $(e_n)^\Omega$ and $(e_n)^B$, we can deduce that
\begin{align} \label{EOB}
\big| (e_n)^\Omega + (e_n)^B \big|^2 \leq 2 + \frac{2 \big| \nabla' h_{z_0}(w_0') \big| \cdot |w_0'|}{\rho \big( 1+\big| \nabla' h_{z_0}(w_0') \big|^2 \big)^{1/2}} + \frac{2\big| h_{z_0}(w_0') \big|}{\rho \big( 1+\big| \nabla' h_{z_0}(w_0') \big|^2 \big)^{1/2}} \leq 2 + 4nK \rho.
\end{align}
On the other hand, 
\[
y \cdot \big( (e_n)^\Omega + (e_n)^B \big) = (y-w_0) \cdot \big( (e_n)^\Omega + (e_n)^B \big) + w_0 \cdot (e_n)^\Omega + w_0 \cdot (e_n)^B.
\]
Similar to the derivation of estimate (\ref{EOB}), we can deduce that $|w_0 \cdot (e_n)^\Omega| \leq 2nK \rho^2$. Since $w_0 \cdot (e_n)^B = - \rho$, $\rho < (4nK+4)^{-1}$ and $|y-w_0| < \rho^2$, we then obtain that
\begin{align} \label{EPL}
- y \cdot \big( (e_n)^\Omega + (e_n)^B \big) \geq \rho - (2nK+2)\rho^2 \geq \frac{\rho}{2},
\end{align}
which leads to the fact that $\ell \geq \frac{\rho}{\sqrt{3}}$. That is to say, $\pi_I : B^\partial_{\rho^2}(w_0) \setminus \Gamma \to \pi_I\big( B^\partial_{\rho^2}(w_0) \setminus \Gamma \big)$ is indeed injective since for any $y, y_\ast \in B^\partial_{\rho^2}(w_0) \setminus \Gamma$, we have that $|y - y_\ast| < 2\rho^2 < \frac{\rho}{\sqrt{3}}$.

In the meantime, $\pi_I : B^\partial_{\rho^2}(w_0) \cap B_\rho(z_0) \to \pi_I\big( B^\partial_{\rho^2}(w_0) \cap B_\rho(z_0) \big)$ is also injective. Indeed as otherwise, there exist $z, z_\ast \in B^\partial_{\rho^2}(w_0) \cap B_\rho(z_0)$ and $\ell \in (0,\infty)$ such that $z + \ell (e_n)^I = z_\ast$. If we focus on the $n$-th component of this vector equation, we see that
\[
z_n + \ell (e_n)^I_n = h_{z_0}(z') + \ell (e_n)^I_n = h_{z_0}(z_\ast') = (z_\ast)_n.
\]
Since $\ell = |z_\ast - z| \geq |z_\ast' - z'|$, by the mean value theorem, there exists $t \in (0,1)$ such that
\[
\big| \nabla' h_{z_0}\big( (1-t) z' + t z_\ast' \big) \big| \geq \frac{| h_{z_0}(z_\ast') - h_{z_0}(z') |}{| z_\ast' - z' |} = \frac{\ell (e_n)^I_n}{|z_\ast' - z'|} \geq (e_n)^I_n.
\]
By a similar derivation as for estimate (\ref{EOB}), we can deduce that
\begin{equation} \label{ENIN}
\begin{split}
(e_n)^I_n &\geq \frac{1}{\big| (e_n)^\Omega + (e_n)^B \big|} \cdot \left( \frac{1}{\big( 1+ \big| \nabla' h_{z_0}(w_0') \big|^2 \big)^{\frac{1}{2}}} - \bigg| \frac{| h_{z_0}(w_0') |}{\rho} \bigg| \right) \\
&\geq \frac{1}{\big( 2 + 4nK \rho \big)^{\frac{1}{2}}} \cdot \left( \frac{1}{\big( 1+ n^2 K^2 \rho^2 \big)^{\frac{1}{2}}} - nK \rho \right) > \frac{2}{5}
\end{split}
\end{equation}
for any $\rho<(4nK)^{-1}$. On the other hand, since both $|z_\ast'|$ and $|z'|$ are less than $\rho$ and the open ball $B_\rho(0')$ is convex, the line segment joining $z_\ast'$ and $z'$ lies completely in $B_\rho(0')$, i.e., we have that $\big| (1-t) z' + t z_\ast' \big| < \rho$. Hence, $\big| \nabla' h_{z_0}\big( (1-t)z' + t z_\ast' \big) \big|$ can be estimated by $nK \rho$, which is less than $1/4$ as $\rho<(4nK)^{-1}$. We obtain a contradiction. The mapping $\pi_I : B^\partial_{\rho^2}(w_0) \cap B_\rho(z_0) \to \pi_I\big( B^\partial_{\rho^2}(w_0) \cap B_\rho(z_0) \big)$ is indeed injective. 

Moreover, we could also prove that for any $z \in B^\partial_{\rho^2}(w_0) \cap B_\rho(z_0)$ and $y \in B^\partial_{\rho^2}(w_0) \setminus \Gamma$, we have that $\pi_I(z) \neq \pi_I(y)$. Suppose that there exist $\ell \in (0,\infty)$, $z \in B^\partial_{\rho^2}(w_0) \cap B_\rho(z_0)$ and $y \in B^\partial_{\rho^2}(w_0) \setminus \Gamma$ such that $y + \ell (e_n)^I = z$. Since $\rho < \mathrm{min} \{ \alpha, \beta, R_\ast\}$, the $n$-th component of this vector equation tells us that
\[
h_{z_0}(z') = \ell (e_n)^I_n + y_n \geq \ell (e_n)^I_n + h_{z_0}(y')
\]
as $y \in B^\partial_{\rho^2}(w_0) \setminus \Gamma$ lies above $\Gamma$. Then same argument in the above paragraph would lead us to the same contradiction. Suppose that there exist $\ell_\ast \in (0,\infty)$, $z \in B^\partial_{\rho^2}(w_0) \cap B_\rho(z_0)$ and $y \in B^\partial_{\rho^2}(w_0) \setminus \Gamma$ such that $z + \ell_\ast (e_n)^I = y$. Note that a straight line would intersect a sphere at two points at most. Let us consider the straight line $L_y := \{ y + \ell (e_n)^I \bigm| \ell \in (-\infty,\infty) \}$. The equation $z + \ell_\ast (e_n)^I = y$ means that the straight line $L_y$ actually has intersection with the sphere $\partial B_\rho(z_0)$. Clearly $L_y$ does not lie in the tangent plane at $y$ with respect to the sphere $\partial B_\rho(z_0)$. Indeed, since the normal at $y$ that points inside the ball $B_\rho(z_0)$ is $-\frac{y}{\rho}$ and
\[
\bigg| -\frac{y}{\rho} \cdot (e_n)^I \bigg| = \bigg| \frac{w_0 - y}{\rho} \cdot (e_n)^I + (e_n)^B \cdot (e_n)^I \bigg| \geq \frac{1 - 2nK \rho}{\big( 2 + 4nK \rho \big)^{\frac{1}{2}}} - \rho > \frac{1}{2 \sqrt{3}} - \frac{1}{4} > 0
\]
as $\rho < (4nK + 4)^{-1}$. Thus, the straight line $L_y$ must intersects $\partial B_\rho(z_0)$ at two points. One point is $y$, the other point must be of the form $y - \ell_1 (e_n)^I$ for some $\ell_1 > 0$ as the ray $L_y^- := \{ y - \ell (e_n)^I \bigm| \ell > 0 \}$, which starts from $y$, enters the open ball $B_\rho(z_0)$. This is due to the fact that the point $z = y - \ell_\ast (e_n)^I$ actually lies inside $B_\rho(z_0)$. Hence, the ray $L_y^-$ must cross the sphere $\partial B_\rho(z_0)$ again in order to get out of $B_\rho(z_0)$. On the other hand, since $y \in B^\partial_{\rho^2}(w_0) \setminus \Gamma$ and $L_y$ must intersects $\partial B_\rho(z_0)$ at two points, estimate (\ref{EPL}) and (\ref{EOB}) implies the fact that the ray $L_y^+ := \{ y + \ell (e_n)^I \bigm| \ell > 0 \}$ starting from $y$ would intersect $\partial B_\rho(z_0)$ again for some $\ell_2 > \frac{\rho}{\sqrt{3}}>0$. Now, we arrived at the fact that the straight line $L_y$ intersects the sphere $\partial B_\rho(z_0)$ at three different points; i.e., $y - \ell_1 (e_n)^I$, $y$ and $y + \ell_2 (e_n)^I$ with $\ell_1,\ell_2 > 0$, which is clearly a contradiction.
Therefore, together with the fact that $\pi_I : B^\partial_{\rho^2}(w_0) \setminus \Gamma \to \pi_I\big( B^\partial_{\rho^2}(w_0) \setminus \Gamma \big)$ and $\pi_I : B^\partial_{\rho^2}(w_0) \cap B_\rho(z_0) \to \pi_I\big( B^\partial_{\rho^2}(w_0) \cap B_\rho(z_0) \big)$ are both injective, we conclude that the mapping $\pi_I : B^\partial_{\rho^2}(w_0) \to \pi_I\big( B^\partial_{\rho^2}(w_0) \big)$ is indeed injective.

For $x,z \in \Gamma \cap \overline{B_\rho(z_0)}$, the coordinate equivalence relation (\ref{CUD}) tells us that $|z-x|=|z^I - x^I|$. By the triangle inequality, we see that
\begin{align} \label{TIEC}
|z^I_n - x^I_n| \leq |z^{I,'} - x^{I,'}| + |x-z|.
\end{align}
For $1 \leq j \leq n-1$, note that
\[
z_j-x_j = \sum_{i=1}^{n-1} ( z^I_i - x^I_i ) (e_i)^I_j + ( z^I_n - x^I_n ) (e_n)^I_j
\]
where the notation $(e_i)^I_k$ denotes the $k$-th component of unit vector $(e_i)^I$ for $1 \leq i,k \leq n$. Similar to the derivation of estimate (\ref{EOB}), we can deduce that
\begin{align*}
\big| (e_n)^I_j \big| &\leq \frac{1}{|(e_n)^\Omega + (e_n)^B|} \left( \frac{\big| \partial_j h_{z_0} (w_0') \big|}{\big( 1 + \big| \nabla' h_{z_0} (w_0') \big|^2 \big)^{1/2}} + \frac{\big| (w_0)_j \big|}{\rho} \right) \\
&\leq \frac{1}{(2-4nK \rho)^{\frac{1}{2}}} \bigg( K |w_0'| + \frac{\big| (w_0)_j \big|}{\rho} \bigg).
\end{align*}
Note that by the Cauchy-Schwarz inequality, we have the estimate
\begin{align*}
&\left| \sum_{j=1}^{n-1} \bigg( \sum_{i=1}^{n-1} (z^I_i - x^I_i) (e_i)^I_j \bigg) (z^I_n - x^I_n)(e_n)^I_j \right| \\
&\ \ \leq \left( \sum_{j=1}^{n-1} \bigg( \sum_{i=1}^{n-1} (z^I_i - x^I_i) \cdot (e_i)^I_j \bigg)^2 \right)^{\frac{1}{2}} \big| z^I_n - x^I_n \big| \bigg( \sum_{j=1}^{n-1} \big| (e_n)^I_j \big|^2 \bigg)^{\frac{1}{2}}.
\end{align*}
Hence, by summing up all $j$ from $1$ to $n-1$, we obtain that
\begin{align*}
|z'-x'| &\leq \left( \sum_{j=1}^{n-1} \bigg( \sum_{i=1}^{n-1} (z^I_i - x^I_i) \cdot (e_i)^I_j \bigg)^2 \right)^{\frac{1}{2}} + |z^I_n - x^I_n| \cdot \bigg( \sum_{j=1}^{n-1} \big| (e_n)^I_j \big|^2 \bigg)^{\frac{1}{2}} \\
&\leq n | z^{I,'} - x^{I,'} | + \bigg( \frac{nK^2 \rho^2 + 2nK \rho +1}{2-4nK \rho} \bigg)^{\frac{1}{2}} |z^I_n - x^I_n|.
\end{align*}
On the other hand, by the mean value theorem we see that
\[
|z_n - x_n| = \big| h_{z_0}(z') - h_{z_0}(x') \big| \leq nK \rho |z' - x'|,
\]
which further implies that
\begin{equation} \label{EZMX}
\begin{split}
|z-x| &\leq (nK \rho +1) |z'-x'| \\ 
&\leq (n^2 K \rho + n) |z^{I,'} - x^{I,'}| + (nK \rho +1) \bigg( \frac{nK^2 \rho^2 + 2nK \rho +1}{2-4nK \rho} \bigg)^{\frac{1}{2}} |z^I_n - x^I_n|.
\end{split}
\end{equation}
Since $\rho<(12nK)^{-1}$, then
\[
(nK \rho +1) \bigg( \frac{nK^2 \rho^2 + 2nK \rho +1}{2-4nK \rho} \bigg)^{\frac{1}{2}} < \frac{169}{48 \sqrt{15}} < \frac{169}{185}.
\]
Therefore, by substituting estimate (\ref{EZMX}) back into estimate (\ref{TIEC}) and then considering the absorption principle, we obtain the estimate
\begin{align} \label{LEG}
|z^I_n - x^I_n| \leq 13n |z^{I,'} - x^{I,'}|.
\end{align}

We next consider $x,y \in B_{\rho^2}^\partial (w_0) \setminus \big( \Gamma \cap B_\rho(z_0) \big)$.
Let $g_{w_0}(u') := \rho - \big( \rho^2 - |u'|^2 \big)^{1/2}$ for $u' \in B_{\rho^2}(0')$. Within the local coordinate system in $w_0$ represented by the orthonormal base $\{(e_1)^B, (e_2)^B, ... , (e_n)^B\}$, we have that $B^\partial_{\rho^2}(w_0) \setminus \Gamma \subset g_{w_0} \big( B_{\rho^2}(0') \big)$,
i.e., $B^\partial_{\rho^2}(p_0) \setminus \Gamma$ can be viewed as the graph of $g_{w_0}$ on some connected open subset of $B_{\rho^2}(0')$. 
Obviously both $x$ and $y$ belong to $g_{w_0}\big( B_{\rho^2}(0') \big)$, which means that $x^B_n = g_{w_0}\big( x^{B,'} \big)$ and $y^B_n = g_{w_0}\big( y^{B,'} \big)$.
By coordinate equivalence condition (\ref{CUD}), we see that $|y^I - x^I| = |y^B - x^B|$. The triangle inequality implies that
\begin{align} \label{TPEC}
|y^I_n - x^I_n| \leq |y^B - x^B| + |y^{I,'} - x^{I,'}|.
\end{align}
By the mean value theorem, we have that
\[
\big| y^B_n - x^B_n \big| \leq \big| g_{w_0}\big( y^{B,'} \big) - g_{w_0}\big( x^{B,'} \big) \big| \leq \frac{1}{3} \big| y^{B,'} - x^{B,'} \big|
\]
as $\rho<1/4$. There exist rotation matrices $R_{z_0}^I$ and $R_{z_0}^B$ that satisfies $R_{z_0}^I e_i = (e_i)^I$ and $R_{z_0}^B e_i = (e_i)^B$ for all $1 \leq i \leq n$. The coordinate equivalence condition (\ref{CUD}) also says that
\[
y-x = R_{z_0}^B \cdot (y^B - x^B) = R_{z_0}^I \cdot (y^I - x^I),
\]
which further implies that
\begin{align} \label{EYMX}
y^B - x^B = \big( R_{z_0}^B \big)^{\mathrm{T}} \cdot R_{z_0}^I \cdot (y^I - x^I),
\end{align}
where $\big( R_{z_0}^B \big)^{\mathrm{T}}$ is the transpose of $R_{z_0}^B$. Note that the $n$-th column of matrix $\big( R_{z_0}^B \big)^{\mathrm{T}} \cdot R_{z_0}^I$ is indeed $\big( (e_1)^B \cdot (e_n)^I, \, (e_2)^B \cdot (e_n)^I, ... \, , \, (e_n)^B \cdot (e_n)^I \big)$. As we have already derived it in estimate (\ref{EOB}), 
\[
\big| (e_n)^B \cdot (e_n)^I \big| = \frac{\big| 1 + (e_n)^B \cdot (e_n)^\Omega \big|}{\big| (e_n)^\Omega + (e_n)^B \big|} \geq \frac{1 - 2nK \rho}{\sqrt{2 + 4nK \rho}}.
\]
Since $\big( R_{z_0}^B \big)^{\mathrm{T}} \cdot R_{z_0}^I$ is also an orthogonal matrix, we have that
\[
\sum_{i=1}^{n-1} \big| (e_i)^B \cdot (e_n)^I \big|^2 = 1 - \big| (e_n)^B \cdot (e_n)^I \big|^2 \leq \frac{1 + 8nK \rho - 4n^2 K^2 \rho^2}{2 + 4nK \rho}.
\]
From the coordinate equivalence relation (\ref{EYMX}), we deduce the estimate
\[
\big| y^B_i - x^B_i \big| \leq (n-1) \big| y^{I,'} - x^{I,'} \big| + \big| (e_i)^B \cdot (e_n)^I \big| \cdot \big| y^I_n - x^I_n \big|
\]
which holds for any $1 \leq i \leq n-1$. 
As the Cauchy-Schwarz inequality guarantees that
\[
\sum_{i=1}^{n-1} \big| (e_i)^B \cdot (e_n)^I \big| \leq (n-1)^{\frac{1}{2}} \bigg( \sum_{i=1}^{n-1} \big| (e_i)^B \cdot (e_n)^I \big|^2 \bigg)^{\frac{1}{2}},
\]
by summing up every $i$ from $1$ to $n-1$, we deduce that
\begin{align*}
\big| y^{B,'} - x^{B,'} \big| \leq (n-1)^{\frac{3}{2}} \big| y^{I,'} - x^{I,'} \big| + \bigg( \frac{1 + 8nK\rho - 4n^2 K^2 \rho^2}{2 + 4nK \rho} \bigg)^{\frac{1}{2}} \big| y^I_n - x^I_n \big|.
\end{align*}
Substitute the estimate for $\big| y^{B,'} - x^{B,'} \big|$ and $\big| y^B_n - x^B_n \big|$ back into estimate (\ref{TPEC}), we have that
\[
\big| y^I_n - x^I_n \big| \leq \bigg( \frac{4}{3} n^{\frac{3}{2}} +1 \bigg) \big| y^{I,'} - x^{I,'} \big| + \frac{4}{3} \cdot \bigg( \frac{1 + 8nK\rho - 4n^2 K^2 \rho^2}{2 + 4nK \rho} \bigg)^{\frac{1}{2}} \big| y^I_n - x^I_n \big|.
\]
Since $\rho< (96nK)^{-1}$, then 
\[
\frac{4}{3} \cdot \bigg( \frac{1 + 8nK\rho - 4n^2 K^2 \rho^2}{2 + 4nK \rho} \bigg)^{\frac{1}{2}} < \frac{4 \sqrt{5}}{9} < \frac{995}{1000}.
\]
Therefore, by absorption principle we obtain that
\begin{align} \label{LEB}
\big| y^I_n - x^I_n \big| \leq 600 n^{\frac{3}{2}} \big| y^{I,'} - x^{I,'} \big|.
\end{align}

Since $\pi_I : B^\partial_{\rho^2}(w_0) \to \pi_I\big( B^\partial_{\rho^2}(w_0) \big)$ is continuous and injective, by the theorem of invariance of domain, see e.g. \cite[Corollary 19.9]{Bre}, $\pi_I$ is a homeomorphism between $B^\partial_{\rho^2}(w_0)$ and $\pi_I\big( B^\partial_{\rho^2}(w_0) \big)$ and it is an open map. Thus, $\pi_I\big( B^\partial_{\rho^2}(w_0) \big)$ is open as $B^\partial_{\rho^2}(w_0)$ is open. There exists $\varepsilon_0 > 0$ such that $B_{\varepsilon_0}(0') \subset \pi_I\big( B^\partial_{\rho^2}(w_0) \big)$. We let $\alpha_0 = \mathrm{min} \, \{ \varepsilon_0/2, \rho^2/(600 n^{\frac{3}{2}}) \}$. The mapping $\pi_I^{-1} : B_{\alpha_0}(0') \to \pi_I^{-1}\big( B_{\alpha_0}(0') \big)$ is continuous and bijective.  
For $z \in \pi_I^{-1}\big( B_{\alpha_0}(0') \big) \cap B_\rho(z_0)$ and $y \in \pi_I^{-1}\big( B_{\alpha_0}(0') \big) \setminus \Gamma$, there exists $x \in \Gamma \cap \partial B_\rho(z_0) \cap B^\partial_{\rho^2}(w_0)$ such that $\pi_I(x) = \big( x^{I,'}, 0 \big)$ lies exactly on the line segment joining $\pi_I(z) = \big( z^{I,'}, 0 \big)$ and $\pi_I(y) = \big( y^{I,'}, 0 \big)$ in the hyperplane $P_I(w_0)$. Indeed, as the line segment joining $\big( z^{I,'}, 0 \big)$ and $\big( y^{I,'}, 0 \big)$ in $P_I(w_0)$ are of the form $\big( z^{I,'}, 0 \big) + t \cdot \big( y^{I,'} - z^{I,'}, 0 \big)$ where $0 \leq t \leq 1$. Let $0< t_\ast < 1$ be the smallest $t$ such that 
\[
\pi_I^{-1}\big( \big( z^{I,'}, 0 \big) + t \cdot \big( y^{I,'} - z^{I,'}, 0 \big) \big) \notin \pi_I^{-1}\big( B_{\alpha_0}(0') \big) \cap B_\rho(z_0)
\]
for the first time. Let $x = \pi_I^{-1}\big( \big( z^{I,'}, 0 \big) + t_\ast \cdot \big( y^{I,'} - z^{I,'}, 0 \big) \big)$. We must have that $x \in \Gamma \cap \partial B_\rho(z_0)$ since $\pi_I^{-1}\big( B_{\alpha_0}(0') \big) \setminus \Gamma$ is open. Therefore, by estimate (\ref{LEG}) and (\ref{LEB}) we deduce that
\begin{align*}
\big| z^I_n - y^I_n \big| &\leq \big| z^I_n - x^I_n \big| + \big| y^I_n - x^I_n \big| \leq 600 n^{\frac{3}{2}} \big( \big| z^{I,'} - x^{I,'} \big| + \big| y^{I,'} - x^{I,'} \big| \big) = 600 n^{\frac{3}{2}} \big| z^{I,'} - y^{I,'} \big|.
\end{align*}
Take $\beta_0 = \alpha_0$ and $\gamma_0(u') := \big( \pi_I^{-1}(u') \big)^I_n$ for $u' \in B_{\alpha_0}(0')$, then we are done.
\end{proof}

Next, let us recall the concept for a bounded domain $D$ to be star-shaped (or star-like). We say that $D$ is star-shaped (or star-like) with respect to a point $\overline{x} \in D$ if every ray starting from $\overline{x}$ intersects $\partial D$ at one and only one point, and that $D$ is star-shaped (or star-like) with respect to an open ball $B \subset D$ if $D$ is star-shaped (or star-like) with respect to every point $\overline{x} \in B$; see e.g. \cite[Section II.1.4]{Gal}. 

\begin{lemma} \label{BOS}
Let $z_0 \in \Gamma$ and $0<\rho< \mathrm{min} \{ (32nK)^{-1}, \alpha, \beta, R_\ast\}$. Then $B^\Omega_\rho(z_0)$ is star-like with respect to $B_{\frac{\rho}{4}}(x_0) \subset B^\Omega_\rho(z_0)$ where $x_0$ is the point whose projection on $\Gamma$ is $z_0$ and $d(x_0) = \frac{\rho}{2}$.
\end{lemma}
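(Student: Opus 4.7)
The plan is to use the local graph representation of $\Gamma$ inside $B_\rho(z_0)$ to reduce star-likeness to a one-variable monotonicity question along each ray emanating from $\overline{x} \in B_{\rho/4}(x_0)$.

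First I would verify that for $\rho < \min\{(32nK)^{-1}, \alpha, \beta, R_\ast\}$ the inclusion $B_\rho(z_0) \subset U_{\alpha,\beta,h_{z_0}}(z_0)$ holds. The conditions $(\nabla' h_{z_0})(0') = 0'$ and $\|(\nabla')^2 h_{z_0}\|_\infty \leq K$ give, via the mean value theorem, $|\nabla' h_{z_0}(x')| \leq nK|x'|$ and $|h_{z_0}(x')| \leq nK|x'|^2$, so for $|x'| < \rho$ we obtain $|h_{z_0}(x')| < \rho/32$ and thus $|x_n - h_{z_0}(x')| < \rho + \rho/32 < \beta$. Consequently
\[
B^\Omega_\rho(z_0) = \left\{ x = (x', x_n) \in B_\rho(z_0) \bigm| x_n > h_{z_0}(x') \right\}.
\]
In the local frame at $z_0$, the inward unit normal at $z_0$ is $e_n$, so $x_0 = (0', \rho/2)$, and an elementary estimate shows $B_{\rho/4}(x_0) \subset B^\Omega_\rho(z_0)$: any $\overline{x} = (y', \overline{x}_n)$ in this ball satisfies $|\overline{x}| < 3\rho/4 < \rho$ and $\overline{x}_n > \rho/4 > \rho/32 > h_{z_0}(y')$.

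Next I would fix $\overline{x} = (y', \overline{x}_n) \in B_{\rho/4}(x_0)$ and a unit vector $v = (v', v_n)$, and let $t_1 \in (0, 2\rho]$ be the unique exit time of the ray $t \mapsto \overline{x} + tv$ from the open convex ball $B_\rho(z_0)$. Introduce
\[
F(t) := \overline{x}_n + t v_n - h_{z_0}(y' + t v'), \qquad 0 \leq t \leq t_1,
\]
so that $\overline{x} + tv \in B^\Omega_\rho(z_0)$ precisely when $t < t_1$ and $F(t) > 0$. Since $F(0) > 0$ by the previous step, star-likeness amounts to proving that $F$ has at most one zero on $[0, t_1]$. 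The natural split is by the size of $v_n$, using the uniform bound $|\nabla' h_{z_0}(y' + tv')| \leq nK\rho$ on $[0, t_1]$, which yields $|F'(t) - v_n| \leq nK\rho$. If $v_n > nK\rho$, then $F' > 0$ everywhere, so $F$ is strictly increasing and never vanishes; the ray exits only through the spherical part. If $v_n < -nK\rho$, then $F' < 0$ everywhere, so $F$ is strictly decreasing and vanishes at most once, giving a single exit point (on either $\Gamma$ or the sphere).

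The remaining middle case $|v_n| \leq nK\rho$ is where $F'$ could change sign; here one exploits the specific location of $\overline{x}$. Combining $\overline{x}_n \geq \rho/4$, $|t v_n| \leq 2\rho \cdot nK\rho < \rho/16$, and $|h_{z_0}(y' + tv')| \leq nK\rho^2 < \rho/32$, one obtains $F(t) \geq 5\rho/32 > 0$ throughout $[0, t_1]$, so the ray never encounters $\Gamma$ and exits only through the sphere. In every case the ray meets $\partial B^\Omega_\rho(z_0)$ at exactly one point, which is precisely the star-likeness of $B^\Omega_\rho(z_0)$ with respect to $\overline{x}$. The main delicate point is simply verifying that the threshold $\rho < (32nK)^{-1}$ is tight enough to close the bound in the middle case while remaining compatible with $B_\rho(z_0) \subset U_{\alpha,\beta,h_{z_0}}(z_0)$; beyond this careful book-keeping of constants, nothing deeper than the mean value theorem is required.
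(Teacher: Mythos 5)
Your proof is correct, and it takes a genuinely different (and arguably cleaner) organizational route than the paper. The paper argues by contradiction, separately ruling out three failure modes for a ray emanating from $B_{\rho/4}(x_0)$: two intersections with $\Gamma \cap B_\rho(z_0)$; an intersection with $\Gamma$ followed by one with $\partial B^\Omega_\rho(z_0)\setminus\Gamma$; and the reverse order (which is dispatched via the observation that a line meets a sphere in at most two points). The quantitative engine in each case is that hitting $\Gamma$ from a point with $x_n>\rho/4$ forces a steep descent $-(e_v)_n>1/16$, which collides with $\|\nabla' h_{z_0}\|_{L^\infty(B_\rho(0'))}\leq nK\rho<1/32$. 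You fold all of this into the single scalar function $F(t)=(\overline{x}+tv)_n-h_{z_0}((\overline{x}+tv)')$ and a trichotomy on $v_n$: steep upward ($F$ strictly increasing, never zero), steep downward ($F$ strictly decreasing, at most one zero, after which the ray stays strictly below $\Gamma$ until it leaves the convex ball), and shallow ($F\geq 5\rho/32>0$ on all of $[0,t_1]$, so the ray never reaches $\Gamma$). The geometric inputs are identical to the paper's — the graph representation of $\Gamma$ inside $B_\rho(z_0)$, the mean-value bounds $|\nabla' h_{z_0}|\leq nK\rho$ and $|h_{z_0}|\leq nK\rho^2$, and the separation $\overline{x}_n>\rho/4$ — and the threshold $\rho<(32nK)^{-1}$ plays exactly the same role, but your monotonicity framing both subsumes and replaces the paper's sphere-intersection-count argument and makes the logical structure more transparent. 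One remark on bookkeeping, common to both proofs: the verification $\rho+\rho/32<\beta$ (to place $B_\rho(z_0)$ inside $U_{\alpha,\beta,h_{z_0}}(z_0)$) actually requires $\rho<\tfrac{32}{33}\beta$ rather than merely $\rho<\beta$; this is harmless in practice since the lemma is later invoked with $\rho=12\varepsilon<\beta/8$, but it is worth stating the hypothesis slightly more conservatively if one wants the lemma sharp.
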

\begin{proof}
For any $x \in B^\Omega_\rho(z_0)$ and $e_v \in \mathbf{S}^n$ where $\mathbf{S}^n$ denotes the $n$ dimensional unit sphere centered at $0$, the ray $L_x^+ := \{ x + \ell e_v \bigm| \ell>0 \}$ would only intersects $\partial B_\rho(z_0)$ once since otherwise, as we have discussed in the proof of Lemma \ref{BOL}, the straight line $L_x := \{ x + \ell e_v \bigm| \ell \in \mathbf{R} \}$ would have intersected $\partial B_\rho(z_0)$ at more than two different points, which is absurd. Hence, it is sufficient to prove that for any $x \in B_{\frac{\rho}{4}}(x_0)$, the ray $L_x$ not only cannot have two intersections on $B_\rho(z_0) \cap \Gamma$, but also cannot have one intersection on $\partial B^\Omega_\rho(z_0) \setminus \Gamma$ and another intersection on $B_\rho(z_0) \cap \Gamma$. We shall prove these two claims by assuming the contrary.

Note that the coordinate of $x_0$ in the local coordinate system in $z_0$ is indeed $(0', \frac{\rho}{2})$. Hence within this proof, for any $u \in B_\rho(z_0)$, we may assume that $u = (u_1, ... , u_n)$ is the coordinate of point $u$ in the local coordinate system in $z_0$. For $x \in B_{\frac{\rho}{4}}(x_0)$, the triangle inequality implies that
\[
x_n \geq \frac{\rho}{2} - \bigg| x_n - \frac{\rho}{2} \bigg| \geq \frac{\rho}{2} - |x - x_0| > \frac{\rho}{4}.
\]
Hence, for $x \in B_{\frac{\rho}{4}}(x_0)$ with $\ell>0$, $e_v \in \mathbf{S}^n$ and $z' \in B_\rho(0')$ such that $x + \ell e_v = \big( z', h_{z_0}(z') \big)$, we then must have
\[
- \ell (e_v)_n = x_n - h_{z_0}(z') > \frac{\rho}{4} - nK \rho^2 > \frac{\rho}{8} > 0,
\]
i.e., in this case the $n$-th component of $e_v$ must be negative. Since $x + \ell e_v = \big( z', h_{z_0}(z') \big) \in B_\rho(z_0)$, we must also have that $\ell < 2\rho$. Thus, we can deduce that $-(e_v)_n > \frac{1}{16}$.

Suppose that there exist $x \in B_{\frac{\rho}{4}}(x_0)$, $e_v \in \mathbf{S}^n$ and $0< \ell_1 < \ell_2$ such that the ray $L_x^+$ intersects $B_\rho(z_0) \cap \Gamma$ at points $x + \ell_1 e_v$ and $x + \ell_2 e_v$. That means that there exist $z_1', z_2' \in B_\rho(0')$ such that
\[
x + \ell_1 e_v = \big( z_1', h_{z_0}(z_1') \big), \quad x + \ell_2 e_v = \big( z_2', h_{z_0}(z_2') \big).
\]
By subtracting $x + \ell_1 e_v$ from $x + \ell_2 e_v$, we can easily deduce that
\[
\frac{\big| h_{z_0}(z_1') - h_{z_0}(z_2') \big|}{|z_1' - z_2'|} = - \frac{(\ell_2 - \ell_1) (e_v)_n}{|z_1' - z_2'|} > \frac{1}{16}.
\]
Hence, similar as in the proof of Lemma \ref{BOL}, by the mean value theorem we may deduce that
\[
\frac{1}{32} > nK \rho \geq \| \nabla' h_{z_0} \|_{L^\infty\big(B_\rho(0') \big)} > \frac{1}{16},
\]
which is a contradiction. Therefore, we conclude that for any $x \in B_{\frac{\rho}{4}}(x_0)$ and $e_v \in \mathbf{S}^n$, the ray $L_x^+$ only intersects $B_\rho(z_0) \cap \Gamma$ once.

Suppose that there exist $x \in B_{\frac{\rho}{4}}(x_0)$, $e_v \in \mathbf{S}^n$ and $0<\ell_1<\ell_2$ such that the ray $L_x^+$ intersects $B_\rho(z_0) \cap \Gamma$ at $x+\ell_1 e_v$ and then $\partial B^\Omega_\rho(z_0) \setminus \Gamma$ at $x + \ell_2 e_v$. Since the $n$-th component of $x + \ell_1 e_v$ is of the form $\big( z', h_{z_0}(z') \big)$ for some $z' \in B_\rho(0')$. Then we must have that
\[
h_{z_0}(z') + (\ell_2 - \ell_1) (e_v)_n = x_n + \ell_2 (e_v)_n > h_{z_0}\big( x' + \ell_2 (e_v)' \big),
\]
with $(e_v)' := \big( (e_v)_1, ... , (e_v)_{n-1} \big)$ since $x + \ell_2 e_v \in \partial B_\rho(z_0) \setminus \Gamma$ lies above $\Gamma$. Similar as above, by mean value theorem we deduce that
\[
\frac{1}{32} > \| \nabla' h_{z_0} \|_{L^\infty\big( B_\rho(0') \big)} \geq \frac{\big| h_{z_0}(z') - h_{z_0}\big( x' + \ell_2 (e_v)' \big) \big|}{\big|z' - x' - \ell_2 (e_v)' \big|} > \frac{\ell_1 - \ell_2}{\big| z' - x' - \ell_2 (e_v)' \big|} (e_v)_n \geq -(e_v)_n,
\]
which is a contradiction as $-(e_v)_n > \frac{1}{16}$. Therefore, we see that for any $x \in B_{\frac{\rho}{4}}(x_0)$ and $e_v \in \mathbf{S}^n$, if the ray $L_x^+$ intersects $B_\rho(z_0) \cap \Gamma$ at $x + \ell_\ast e_v$ for some $\ell_\ast>0$, then $L_x^+$ does not intersect $\partial B^\Omega_\rho(z_0)$ for any $\ell > \ell_\ast$.

Finally, for $x \in B_\rho(z_0)$, we suppose that there exist $e_v \in \mathbf{S}^n$ and $0<\ell_1<\ell_2$ such that the ray $L_x^+$ intersects $\partial B^\Omega_\rho(z_0) \setminus \Gamma$ at $x + \ell_1 e_v$ and then $B_\rho(z_0) \cap \Gamma$ at $x+\ell_2 e_v$. Since $x + \ell_2 e_v \in B_\rho(z_0)$, the ray $L_{x + \ell_2 e_v}^+ := \{ x + \ell e_v \bigm| \ell > \ell_2 \}$ must intersect again at $\partial B_\rho(z_0)$ for some $\ell_3>\ell_2$. On the other hand, since $x \in B_\rho(z_0)$, the ray $L_x^- := \{ x - \ell e_v \bigm| \ell>0 \}$ must intersects $\partial B_\rho(z_0)$ for some $\ell_0>0$. Hence, the straight line $L_x$ intersects $\partial B_\rho(z_0)$ at three different points, i.e., $x - \ell_0 e_v$, $x + \ell_2 e_v$ and $x + \ell_3 e_v$, which is a contradiction. Therefore, for any $x \in B_\rho(z_0)$ and $e_v \in \mathbf{S}^n$, if $L_x^+$ intersects $\partial B^\Omega_\rho(z_0) \setminus \Gamma$ at $x+\ell_\ast e_v$ for some $\ell_\ast>0$, then $L_x^+$ does not intersect $\partial B^\Omega_\rho(z_0)$ for any $\ell>\ell_\ast$.
\end{proof}
%%%

%%%
\subsection{Estimate for the reach $R_\ast$} \label{sub:ER}
We shall derive a lower bound estimate for the reach of $\Gamma$ in terms of the regularity of $\Gamma$. 
\begin{proposition} \label{DR}
Let $\Omega \subset \mathbf{R}^n$ be a uniformly $C^2$ domain of type $(\alpha,\beta,K)$ and $R_\ast$ be the reach of $\Gamma = \partial \Omega$. There exists a constant $C=C(\alpha,\beta,K)>0$ such that the lower bound estimate $R_\ast \geq C>0$ holds.
\end{proposition}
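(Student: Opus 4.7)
I plan to exhibit an explicit $\rho_0 = \rho_0(\alpha,\beta,K) > 0$ such that every $x \in \mathbf{R}^n$ with $|d(x)| < \rho_0$ has a unique nearest point on $\Gamma$; by definition this will force $R_\ast \geq \rho_0$. Since the arguments inside $\Omega$ and inside $\Omega^{\mathrm{c}}$ are symmetric, I focus on $x \in \Omega$ and argue by contradiction. Assume $z_0, z_1 \in \Gamma$ both realize $d_\Gamma(x) < \rho_0$. In the local chart at $z_0$ furnished by the uniformly $C^2$ structure, we may take $z_0 = 0$, $\Gamma \cap U_{\alpha,\beta,h_{z_0}}(z_0)$ to be the graph $x_n = h(x')$ with $h(0')=0$, $\nabla' h(0')=0'$, $\|h\|_{C^2} \leq K$, and $\Omega$ lying above the graph; minimality of $z_0$ then forces $x = (0', d_\Gamma(x))$.

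The first step is to confirm that $z_1$ actually sits inside the chart $U_{\alpha,\beta,h_{z_0}}(z_0)$, so that the graph description applies. From $|z_1 - z_0| \leq 2 d_\Gamma(x) < 2\rho_0$ one reads off $|z_1'| < 2\rho_0$, $|(z_1)_n| < 2\rho_0$, and $|h(z_1')| \leq 2nK\rho_0^2$ by a second-order Taylor expansion exploiting $h(0')=0$ and $\nabla' h(0')=0'$. Choosing $\rho_0$ a small enough multiple of $\min\{\alpha, \beta, 1/K\}$ places $z_1 \in U_{\alpha,\beta,h_{z_0}}(z_0)$, so that $(z_1)_n = h(z_1')$ in this chart. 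Parallelism of $x - z_1 = (-z_1', d_\Gamma(x) - h(z_1'))$ with the inward unit normal $(-\nabla' h(z_1'), 1)/(1+|\nabla' h(z_1')|^2)^{1/2}$ at $z_1$ then yields
\[
z_1' = t\, \nabla' h(z_1'), \qquad t = d_\Gamma(x) - h(z_1') > 0.
\]
Since $\nabla' h(0')=0'$ and $\|(\nabla')^2 h\|_{L^\infty} \leq K$, the mean value theorem gives $|\nabla' h(z_1')| \leq nK |z_1'|$, hence $|z_1'| \leq nK t\,|z_1'|$, with $t \leq C(K)\rho_0$. Shrinking $\rho_0$ once more, depending only on $n$ and $K$, forces $nKt < 1$, which in turn forces $z_1' = 0'$ and therefore $z_1 = z_0$, contradicting the assumption. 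The case $x \in \Omega^{\mathrm{c}}$ is handled identically after flipping the orientation of the inward normal.

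The only delicate point is the first step: the constants $\alpha$ and $\beta$ enter the threshold $\rho_0$ precisely because one must prevent a distant piece of $\Gamma$ from contributing a second minimizer outside the local chart at $z_0$. Once we are guaranteed $z_1 \in U_{\alpha,\beta,h_{z_0}}(z_0)$, the argument reduces to a one-line gradient comparison in the graph coordinates and uses only the $C^2$ bound $K$ together with the ambient dimension $n$.
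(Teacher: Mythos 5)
Your proof is correct, and it takes a genuinely different route from the paper's. The paper first shows local uniqueness of the projection inside the chart $U_{\alpha,\beta,h_{z_0}}(z_0)$, then invokes Federer's Theorem 4.8(7) on sets of positive reach to upgrade the local bound to a uniform lower bound on $\mathrm{reach}(\Gamma,z_0)$, splitting into two cases according to whether the graph ever rises above the tangent plane. You instead argue directly: if $x$ admits two projections $z_0,z_1$, then $|z_1-z_0|\leq 2 d_\Gamma(x)<2\rho_0$ forces $z_1$ into the chart and onto the graph, at which point the perpendicularity of $x-z_1$ to the tangent plane gives $z_1'=t\,\nabla'h(z_1')$ with $|t|\leq d_\Gamma(x)<\rho_0$, and $nK|t|<1$ forces $z_1'=0'$. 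This is entirely self-contained, avoiding Federer's result, and is arguably the cleaner argument since the chart-containment step you spell out is precisely what eliminates the need to treat "far" pieces of $\Gamma$ separately. The only cosmetic imprecision is the unproved claim $t>0$: this does hold (the open segment $(z_1,x]$ stays in $\Omega$, so $x-z_1$ points along the inward normal), but your concluding inequality only uses $|t|<1/(nK)$, so the sign plays no role. Both approaches yield a constant depending only on $\alpha,\beta,K,n$; the paper's final bound $\min\{1/(2nK),\beta/4\}$ nominally omits $\alpha$, but only because it has assumed $\beta<\alpha$ at the outset, while yours involves $\alpha$ explicitly through the chart-containment threshold — either is acceptable for the stated proposition.
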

\begin{proof}
Let $z_0 \in \Gamma$. There exists $\rho_0>0$ such that $B_{2 \rho_0}(z_0) \subset U_{\alpha,\beta,h_{z_0}}(z_0)$ as $U_{\alpha,\beta,h_{z_0}}(z_0)$ is open. Let $x_0 = z_0 - \rho \mathbf{n}(z_0)$ where $\rho \in (0, \rho_0)$ and $\mathbf{n}(z_0)$ denotes the outward unit normal at $z_0$ with respect to $\Gamma$. In the local coordinate system in $z_0$, the coordinate of $x_0$ is indeed $(0', \rho)$. Suppose that there exists $\big( z', h_{z_0}(z') \big) \in U_{\alpha,\beta,h_{z_0}}(z_0) \cap \Gamma$ such that $\big| \big( z', h_{z_0}(z') - \rho \big) \big| \leq \rho$ and $z' \neq 0'$. By the mean value theorem, we have that $\big| h_{z_0}(z') \big| \leq nK |z'|^2$. Hence, we deduce that
\[
|z'|^2 - n^2 K^2 |z'|^4 \leq |z'|^2 + h_{z_0}(z')^2 \leq 2\rho h_{z_0}(z') \leq 2nK \rho |z'|^2,
\]
which further implies that $1 - n^2 K^2 |z'|^2 \leq 2nK \rho$.
By the triangle inequality, we see that $\big| \big( z', h_{z_0}(z') \big) \big| \leq 2 \rho$. Hence, it holds that $1 \leq 2nK \rho + 4 n^2 K^2 \rho^2$, i.e., if $\rho< \mathrm{min} \big\{ \frac{1}{8nK}, \rho_0 \big\}$, for any $z \in U_{\alpha,\beta,h_{z_0}}(z_0) \cap \Gamma$ such that $z \neq z_0$, we must have that $|z - x_0| > \rho$.

Suppose that there exists $w \in U_{\alpha,\beta,h_{z_0}}(z_0) \cap \Gamma$ such that the coordinate of $w$ in the local coordinate system in $z_0$ is of the form $\big( w', h_{z_0}(w') - \nu \big)$ for some $\nu \in (0, \beta)$ and $\big| \big( w', h_{z_0}(w') - \nu - \rho \big) \big| \leq \rho$. Then, we have that
\begin{align} \label{NRHW}
|w'|^2 + h_{z_0}(w')^2 + \nu^2 + 2 \nu \rho \leq 2 (\nu + \rho) h_{z_0}(w').
\end{align}
Since the closed ball $\overline{B_\rho\big( (0',\rho) \big)}$ lies completely in the upper half space $\mathbf{R}_+^n := \{ (x',x_n) \in \mathbf{R}^n \bigm| x_n >0 \}$ except the boundary point $0$, we must have that $w' \neq 0'$ and $h_{z_0}(w') > \nu > 0$. Hence, estimate (\ref{NRHW}) implies that
\begin{align} \label{EWH}
|w'|^2 \leq h_{z_0}(w')^2 + 2 \rho h_{z_0}(w').
\end{align}
Since $\big| h_{z_0}(w') \big| \leq nK |w'|^2$, estimate (\ref{EWH}) implies that $1 \leq n^2 K^2 \rho^2 + 2 n K \rho$, which is a contradiction for $\rho< \mathrm{min} \big\{ \frac{1}{8nK}, \rho_0 \big\}$. Therefore, we show that for any $\rho< \mathrm{min} \big\{ \frac{1}{8nK}, \rho_0 \big\}$, $z_0$ is the unique point in $\Gamma$ such that $|x_0 - z_0 | = d_\Gamma(x_0) = \inf_{z \in \Gamma} \, |x_0 - z|$.

Next, we shall provide a uniform lower bound for $R_\ast$. For $z_0 \in \Gamma$, we define that
\[
\mathrm{reach}\big( \Gamma, z_0 \big) := \underset{\ell>0}{\sup} \, \{ \ell \bigm| z_0 - \ell \mathbf{n}(z_0) \; \, \text{has a unique projection on} \; \, \Gamma\}.
\]
We show in the above two paragraphs that for any $z_0 \in \Gamma$, there exists $\rho_0>0$ such that 
\[
\mathrm{reach}\big( \Gamma, z_0 \big) \geq \mathrm{min} \bigg\{ \frac{1}{8nK}, \rho_0 \bigg\} > 0.
\]
Hence, for any $x_0 = z - \ell \mathbf{n}(z_0)$ with $0<\ell<\mathrm{reach}\big( \Gamma, z_0 \big)$ and $y \in \Gamma$, there exists a lower bound for $\mathrm{reach}\big( \Gamma, z_0 \big)$, i.e.,
\begin{align} \label{RGz}
\mathrm{reach}\big( \Gamma, z_0 \big) \geq - \frac{|z_0-y|^2 |x_0-z_0|}{2 (x_0-z_0) \cdot (z_0-y)},
\end{align}
see e.g. \cite[Theorem 4.8.(7)]{Fed}. 
Suppose that there exists $|y'|<\alpha$ such that $h_{z_0}(y')>0$. Since $h_{z_0}(y') \leq n K |y'|^2$, then estimate (\ref{RGz}) would imply that
\[
\mathrm{reach}\big( \Gamma, z_0 \big) \geq \frac{1}{2} \left( \frac{|y'|^2}{h_{z_0}(y')} + h_{z_0}(y') \right) \geq \frac{1}{2nK}.
\]
Suppose that $h_{z_0}(y') \leq 0$ for any $|y'|< \alpha$, then we claim that $\mathrm{reach}\big( \Gamma, z_0 \big) \geq \frac{\beta}{4}$. Suppose that there exists $z_\ast \in \Gamma$ such that $z_\ast \neq z_0$ and $\big| (0',\ell) - z_\ast \big| \leq \ell$ for some $0<\ell<\frac{\beta}{4}$. 
Since $|z_\ast - z_0| \leq 2 \ell$ and the closed ball $\overline{B_\ell\big( (0',\ell) \big)}$ lies completely in $\mathbf{R}_+^n$ except $z_0$, we deduce that
\[
0<(z_\ast)_n< \frac{\beta}{2}, \quad \frac{\beta}{2} - h_{z_0}(z_\ast') \leq \frac{\beta}{2} + n K |z_\ast'|^2 \leq \frac{\beta}{2} + \frac{nK \beta^2}{4}.
\] 
Since we are assuming that $\beta<\frac{2}{nK}$, we have that $\frac{\beta}{2} \leq h_{z_0}(z_\ast') + \frac{\beta}{2}$, which implies that
\[
h_{z_0}(z_\ast') \leq 0 < (z_\ast)_n < \frac{\beta}{2} \leq h_{z_0}(z_\ast') + \frac{\beta}{2}.
\]
Hence, we must have that $z_\ast \in \Omega$ as 
\[
U_{\alpha,\beta,h_{z_0}}(z_0) \cap \Omega = \{ (z',z_n) \bigm| |z'|<\alpha, h_{z_0}(z') < z_n < h_{z_0}(z') + \beta \},
\]
which is a contradiction. For any $z_0 \in \Gamma$, we conclude that $\mathrm{reach}\big( \Gamma, z_0 \big) \geq \frac{\beta}{4}$ as $\beta< \frac{2}{nK}$. Note that $R_\ast \geq \inf_{z_0 \in \Gamma} \, \mathrm{reach}\big( \Gamma, z_0 \big)$, we are done.
\end{proof}

Let us recall the concept for a perturbed $C^k$ $(k \geq 2)$ half space $\mathbf{R}_h^n$ to have small perturbation.
We say that $\mathbf{R}_h^n$ is a perturbed $C^k$ half space if
\[
\mathbf{R}_h^n = \{ (x', x_n) \in \mathbf{R}^n \bigm| x_n > h(x') \}
\]
with $h \in C_{\mathrm{c}}^k(\mathbf{R}^{n-1})$. Let $R_h>0$ be such that $\operatorname{supp} h \subseteq \overline{B_{R_h}(0')}$. 
We say that the perturbed $C^k$ $(k \geq 2)$ half space $\mathbf{R}^n_h$ has small perturbation if 
\begin{align} \label{SC}
R_h^{\frac{2n-1}{2n}} < \frac{1}{2}, \quad
C_s(h)^{\frac{3n}{2} + 8} C_1(h) \left( C_{\ast,1}(h) + C_{\ast,2}(h) + R_h^{\frac{n}{2}} \right) < \frac{1}{2C^\ast(n)}
\end{align}
where $C^\ast(n)$ is a specific constant depending only on the space dimension $n$,
\begin{align*}
	&C_s(h) := 1 + \lVert h \rVert_{C^1(\mathbf{R}^{n-1})}, \quad 
	C_1(h) := 1 + R_h \left\lVert \nabla'^2 h \right\rVert_{L^\infty(\mathbf{R}^{n-1})}, \\
	&C_{\ast,1}(h) := C_1(h)^3 \left(1 + R_h^{\frac14}\right) \left( R_h^{\frac12} \lVert \nabla'^2 h \rVert_{L^\infty(\mathbf{R}^{n-1})} + R_h^{\frac52} \lVert \nabla'^2 h \rVert_{L^\infty(\mathbf{R}^{n-1})}^3 \right), \\
	&C_{\ast,2}(h) := \left( R_h + R_h^{\frac{1}{2h}} \right) \left\lVert \nabla'^2 h \right\rVert_{L^\infty(\mathbf{R}^{n-1})} + \left(R_h^{n-1} + 1\right) \lVert h \rVert_{C^1(\mathbf{R}^{n-1})}.
\end{align*}
In addition, we say a perturbed $C^k$ ($k\geq2$) half space $\mathbf{R}^n_h$ is of type $(K)$ if
\[
\sup_{x' \in \mathbf{R}^{n-1}} \left\lvert \nabla'^2 h (x') \right\rvert < K;
\]
see \cite{GigaGuP}.

\begin{remark} \label{PHS:abk}
If we want to say that a perturbed $C^k$ $(k \in \mathbf{N})$ half space $\mathbf{R}_h^n$ is of type $(\alpha,\beta,K)$ in the sense as other general domains, then in this case constants $\alpha$ and $\beta$ can be taken to be arbitrarily large.
\end{remark}

Let $\Omega \subset \mathbf{R}^n$ be a uniformly $C^2$ domain of type $(\alpha, \beta, K)$ and $R_\ast$ be the reach of boundary $\Gamma = \partial \Omega$.
For $z_0 \in \Gamma$ and $0<\rho<\mathrm{min} \, \big\{ \frac{\beta}{4}, \frac{R_\ast}{2} \big\}$, we have that $B^\Omega_{2 \rho}(z_0) \subset U_{\alpha,\beta,h_{z_0}}(z_0)$. Hence, within the coordinate system we choose in $z_0$, we see that $B_{2 \rho}(z_0) \cap \Gamma \subset h_{z_0}\big( B_{2 \rho}(0') \big)$. 
By considering the smooth function $\zeta$ defined by expression (\ref{CFZ}), we can construct $\theta \in C_{\mathrm{c}}^\infty(\mathbf{R}^{n-1})$ such that $\theta = 1$ in $B_1(0')$ and $\theta = 0$ in $B_2(0')^{\mathrm{c}}$. We then let $\theta_\rho(x') := \theta \big( \frac{x'}{\rho} \big)$ for $x' \in \mathbf{R}^{n-1}$ and $h_{z_0}^\ast := \theta_\rho \cdot h_{z_0}$. A direct calculation implies that
\begin{align*}
&\| \nabla' h_{z_0}^\ast \|_{L^\infty(\mathbf{R}^{n-1})} \leq C(n) \big( 1 + \| \theta \|_{C^1(\mathbf{R}^{n-1})} \big) \rho K, \\
&\| \nabla'^2 h_{z_0}^\ast \|_{L^\infty(\mathbf{R}^{n-1})} \leq C(n) \big( 1 + \| \theta \|_{C^2(\mathbf{R}^{n-1})} \big) K.
\end{align*}
Therefore, if $\rho$ is taken to be sufficiently small, then $h_{z_0}^\ast$ satisfies the smallness condition (\ref{SC}). As a result, we have the following lemma.

\begin{lemma} \label{EBP}
Let $\Omega \subset \mathbf{R}^n$ be a uniformly $C^2$ domain of type $(\alpha, \beta, K)$ and $R_\ast$ be the reach of $\Gamma = \partial \Omega$.
Let
\[
0<\rho< \mathrm{min} \bigg\{ \frac{\beta}{4}, \frac{R_\ast}{2}, c_0(n,K) \bigg\}
\]
where $c_0(n,K)$ is some specific constant depending only on $n$ and $K$.
Then $B^\Omega_\rho(z_0)$ can be viewed as being contained in the perturbed $C^3$ half space $\mathbf{R}_{h_{z_0}^\ast}^n$ of type $\big( \lambda(K,\rho) \big)$ with
\[
\lambda(K,\rho) := C(n) \big( \| \theta \|_{C^3(\mathbf{R}^{n-1})}+1 \big) \frac{K}{\rho}.
\]
Moreover, $\mathbf{R}_{h_{z_0}^\ast}^n$ has small perturbation and the reach of the boundary $\partial \mathbf{R}_{h_{z_0}^\ast}^n$ is greater than or equal to $\frac{1}{2n \lambda(K,\rho)}$.
\end{lemma}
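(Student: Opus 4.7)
My plan is to build on the construction already outlined in the discussion preceding the statement, namely $h_{z_0}^\ast := \theta_\rho \cdot h_{z_0}$, where $\theta_\rho(x') = \theta(x'/\rho)$ with $\theta \in C_{\mathrm{c}}^\infty(\mathbf{R}^{n-1})$ satisfying $\theta \equiv 1$ on $B_1(0')$ and $\operatorname{supp} \theta \subseteq \overline{B_2(0')}$. First I would verify the inclusion $B^\Omega_\rho(z_0) \subset \mathbf{R}_{h_{z_0}^\ast}^n$. Since the hypothesis forces $\rho < \beta/4 < \beta$ and $\rho < R_\ast/2$, we have $B_{2\rho}(z_0) \subset U_{\alpha,\beta,h_{z_0}}(z_0)$, and on $B_\rho(0')$ the cut-off satisfies $\theta_\rho \equiv 1$, so $h_{z_0}^\ast = h_{z_0}$ there. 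Consequently $\Gamma \cap B_\rho(z_0)$ coincides with $\partial \mathbf{R}_{h_{z_0}^\ast}^n \cap B_\rho(z_0)$, and since both $B^\Omega_\rho(z_0)$ and $\mathbf{R}_{h_{z_0}^\ast}^n \cap B_\rho(z_0)$ describe the points lying above this common graph, the inclusion follows.

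Next I would establish the type bound. Applying the mean value theorem to $\nabla' h_{z_0}(0') = 0'$ and $h_{z_0}(0')=0$ yields $|\nabla' h_{z_0}(x')| \leq nK|x'|$ and $|h_{z_0}(x')| \leq nK|x'|^2$, so on $\operatorname{supp} \theta_\rho \subseteq \overline{B_{2\rho}(0')}$ we have the pointwise controls $|h_{z_0}| \leq 4nK\rho^2$, $|\nabla' h_{z_0}| \leq 2nK\rho$, $|\nabla'^2 h_{z_0}| \leq K$, $|\nabla'^3 h_{z_0}| \leq K$. Combining with the scaling $\|\nabla'^k \theta_\rho\|_{L^\infty} \leq \|\theta\|_{C^k}\rho^{-k}$ and the Leibniz rule, each derivative term is absorbed either by a compensating power of $\rho$ or by the constant $K$, yielding
\begin{equation*}
\|\nabla'^2 h_{z_0}^\ast\|_{L^\infty(\mathbf{R}^{n-1})} \leq C(n)(1+\|\theta\|_{C^2})K \leq \lambda(K,\rho),
\end{equation*}
so $\mathbf{R}_{h_{z_0}^\ast}^n$ is of type $(\lambda(K,\rho))$, as desired.

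To obtain the small perturbation property I would take $R_h := 2\rho$ and check both parts of \eqref{SC}. The first condition $R_h^{(2n-1)/(2n)}<1/2$ holds as soon as $\rho$ is smaller than some $c_1(n)>0$. For the second condition, the derivative estimates from the previous step yield $C_s(h_{z_0}^\ast) \leq 1 + C(n)(1+\|\theta\|_{C^1})K\rho$ and $C_1(h_{z_0}^\ast) \leq 1 + C(n)K\rho$, both uniformly bounded for $\rho\leq 1$, while $C_{\ast,1}(h_{z_0}^\ast)$ contains the factor $R_h^{1/2}\|\nabla'^2 h_{z_0}^\ast\|_{L^\infty} + R_h^{5/2}\|\nabla'^2 h_{z_0}^\ast\|_{L^\infty}^3 = O(\rho^{1/2})$ and $C_{\ast,2}(h_{z_0}^\ast) = O(\rho^{\min(1,1/(2n))})$. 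Hence the left-hand side of the second inequality in \eqref{SC} is $O(\rho^{\min(1/2,1/(2n))})$ with constants depending only on $n$ and $K$, and is strictly less than $1/(2C^\ast(n))$ once $\rho < c_0(n,K)$ is sufficiently small; this is the main technical step of the proof but is otherwise a routine computation. Finally, the bound on the reach of $\partial \mathbf{R}_{h_{z_0}^\ast}^n$ follows by invoking Proposition \ref{DR}: by Remark \ref{PHS:abk} the parameters $\alpha,\beta$ may be taken arbitrarily large, and the proof of Proposition \ref{DR} shows that the reach admits a lower bound of the form $1/(2nK')$ with $K' = \lambda(K,\rho)$, giving $R_\ast(\partial \mathbf{R}_{h_{z_0}^\ast}^n) \geq 1/(2n\lambda(K,\rho))$.
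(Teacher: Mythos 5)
Your proof is correct and follows essentially the same route as the paper's own (quite terse) argument: verify the inclusion via $\theta_\rho \equiv 1$ on $B_\rho(0')$, bound the second derivative of $h_{z_0}^\ast$ by Leibniz and the Taylor-type estimates $|h_{z_0}(x')| \leq nK|x'|^2$, $|\nabla' h_{z_0}(x')| \leq nK|x'|$, observe that all the quantities in the smallness condition (\ref{SC}) go to zero as $\rho \to 0$ with $R_h = 2\rho$, and then read off the reach lower bound from Proposition \ref{DR} and Remark \ref{PHS:abk}. Your additional observation that $\|\nabla'^2 h_{z_0}^\ast\|_{L^\infty}$ is actually $O(K)$ (with no $\rho^{-1}$ factor) is accurate; the extra $\rho^{-1}$ in $\lambda(K,\rho)$ comes from the third derivative of $h_{z_0}^\ast$, which is why the paper's proof invokes $\|\nabla'^3\theta_\rho\|_{L^\infty} \leq \|\nabla'^3\theta\|_{L^\infty}/\rho^3$, but since the "type" only requires a second-derivative bound, your chain of inequalities is still valid.
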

\begin{proof}
It is obvious that $B^\Omega_\rho(z_0) \subset B^\Omega_{2 \rho}(z_0) \subset \mathbf{R}_{h_{z_0}^\ast}^n$. The estimate of $\lambda(K,\rho)$ can be easily deduced since we have that $\| \nabla'^3 \theta_\rho \|_{L^\infty(\mathbf{R}^{n-1})} \leq \| \nabla'^3 \theta \|_{L^\infty(\mathbf{R}^{n-1})} / \rho^3$ and $\big| h_{z_0}(x') \big| \leq nK |x'|^2 \leq 4nK \rho^2$ for $|x'|<2\rho$. 
There exists a constant $c_0(n,K)>0$, depending only on dimension $n$ and $K$, such that $\rho < c_0(n,K)$ implies that $h_{z_0}^\ast$ satisfies the smallness condition (\ref{SC}), i.e., the perturbed $C^3$ half space $\mathbf{R}_{h_{z_0}^\ast}^n$ would have small perturbation. 
Finally by Proposition (\ref{DR}) and Remark \ref{PHS:abk}, we see that the reach of $\partial \mathbf{R}_{h_{z_0}^\ast}^n$ must be greater than or equal to $\frac{1}{2n \lambda(K,\rho)}$.
\end{proof}
%%%

%%%
\subsection{Bogovski$\breve{\i}$ operator, Morrey's $\&$ Poincar$\acute{\text{e}}$ inequalities} \label{sub:BMP}
Let us recall the existence and boundedness of the Bogovski$\breve{\i}$ operator. Let $D \subset \mathbf{R}^n$ be a bounded domain and $L_0^q(D) := \{ g \in L^q(D) \bigm| \int_D g \, dx = 0 \}$ with $1<q<\infty$. We consider the divergence problem
\[
\operatorname{div} u = g, \quad u \bigm|_{\partial D} = 0
\]
for $g \in L_0^q(D)$. If there exist $x_0 \in D$ and $R>0$ such that $D$ is star-like with respect to $B_R(x_0)$ and $\overline{B_R(x_0)} \subset D$, then there exists a bounded linear operator $B_q: L_0^q(D) \to W_0^{1,q}(D)^n$ satisfying $\operatorname{div} B_q(g) = g$ and
\begin{align} \label{BGOE}
\| B_q(g) \|_{W^{1,q}(D)} \leq C(n,q) \left( \frac{\delta(D)}{R} \right)^n \left( 1+ \frac{\delta(D)}{R} \right) \| g \|_{L^q(D)}
\end{align}
where $\delta(D)$ denotes the diameter of $D$, i.e., $\delta(D) := \underset{x,y \in \overline{D}}{\sup} \, |x-y|$; see e.g. \cite[Lemma III.3.1]{Gal}.

Let $z_0 \in \Gamma$, $n<q \leq \infty$ and $\rho \in (0,R_\ast)$ be sufficiently small. Lemma \ref{BOL} guarantees that $B^\Omega_\rho(z_0)$ is indeed a bounded Lipschitz domain.
By Jones \cite[Theorem 1]{PJS}, we see that there exists a bounded linear extension operator $\Lambda_1^q : W^{1,q}\big( B^\Omega_\rho(z_0) \big) \to W^{1,q}(\mathbf{R}^n)$ which satisfies $\Lambda_1^q(g) \bigm|_{B^\Omega_\rho(z_0)} = g$ and
\[
\| \Lambda_1^q(g) \|_{W^{1,q}(\mathbf{R}^n)} \leq C(L,q,n) \| g \|_{W^{1,q} \big( B^\Omega_\rho(z_0) \big)}
\]
for any $g \in W^{1,q}\big( B^\Omega_\rho(z_0) \big)$, where $L$ is the Lipschitz constant for $B^\Omega_\rho(z_0)$. Let $\varphi_0 \in C_{\mathrm{c}}^\infty(\mathbf{R}^n)$ be such that $\varphi_0 = 1$ in $B_\rho(z_0)$ and $\varphi_0 = 0$ in $B_{2 \rho}(z_0)^{\mathrm{c}}$. A direct calculation implies that
\[
\| \varphi_0 \cdot \Lambda_1^q(g) \|_{W^{1,q}(\mathbf{R}^n)} \leq \frac{C(L,q,n)}{\rho} \| g \|_{W^{1,q} \big( B^\Omega_\rho(z_0) \big)}.
\]
By Morrey's inequality in $\mathbf{R}^n$, we have that
\[
\| \varphi_0 \cdot \Lambda_1^q(g) \|_{C^{0,\gamma}(\mathbf{R}^n)} \leq C(n,q) \| \varphi_0 \cdot \Lambda_1^q(g) \|_{W^{1,q}(\mathbf{R}^n)}
\]
with $\gamma = 1 - \frac{n}{q}$. As $\varphi_0 = 1$ in $B_\rho(z_0)$ and $\Lambda_1^q(g) \bigm|_{B^\Omega_\rho(z_0)} = g$ in $B^\Omega_\rho(z_0)$, we obtain Morrey's inequality in $B^\Omega_\rho(z_0)$, i.e.,
\begin{align} \label{MOR}
\| g \|_{C^{0,\gamma}\big( B^\Omega_\rho(z_0) \big)} \leq \frac{C(L,q,n)}{\rho} \| g \|_{W^{1,q}\big( B^\Omega_\rho(z_0) \big)}
\end{align}
where $\gamma = 1 - \frac{n}{q}$.

Let $z_0 \in \Gamma$ and $n \leq q < \infty$. We finally need the Poincar$\acute{\mathrm{e}}$ inequality in $B^\Omega_\rho(z_0)$ with detailed dependency of the Poincar$\acute{\mathrm{e}}$ constant. 
Let us recall that for $p \geq 1$, a domain $D$ is called an $L^p$-averaging domain if there exists a constant $a_D>0$ such that the estimate
\[
\left( \frac{1}{|D|} \int_D \big| u - u_D \big|^p \, dx \right)^{\frac{1}{p}} \leq a_D [u]_{BMO^\infty(D)}
\]
holds for any $u \in BMO^\infty(D)$; see e.g. \cite{Stap}.
On the other hand, a domain $D$ is called a John domain if there exist $0<\delta \leq 1$ and $x_0 \in D$ such that for any $x \in D$, there is a rectifiable path $\gamma : [0, 1] \to D$ which satisfies $\gamma(0)=x$, $\gamma(1)=x_0$ and for all $t \in [0,1]$,
\[
d\big( \gamma(t), \partial D \big) := \inf_{z \in \partial D} \big| \gamma(t) - z \big| \geq \delta \big| \gamma(t) - \gamma(s) \big|, \quad s \in [0,t];
\]
see e.g. \cite[Lemma 2.7]{MaSa}.
A bounded Lipschitz domain $D$ is a John domain with $\delta$ depending only on the Lipschitz regularity of $\partial D$; see e.g. \cite{Bom}, \cite{Stap}.
A John domain $D$ is indeed an $L^p$-averaging domain with constant $a_D$ depending on $\delta, p$ only; see \cite[Theorem 3.14]{Stap} and \cite[Lemma 2.1]{Bom}.
Hence, in the case where $D$ is a bounded Lipschitz domain, it is guaranteed by \cite[Theorem 3.4]{Stap} that for any $g \in W^{1,q}(D)$, the Poincar$\acute{\mathrm{e}}$ inequality
\begin{align} \label{POI}
\| g - g_D \|_{L^q(D)} \leq C(L_D, q) |D|^{\frac{1}{n}} \| \nabla g \|_{L^q(D)}
\end{align}
holds where 
\[
g_{D} := \frac{1}{|D|} \int_{D} g(y) \, dy
\]
denotes the average value of $g$ in $D$, $|D|$ denotes the Lebesgue measure of domain $D$ and $L_D$ represents the Lipschitz constant which characterizes the Lipschitz regularity of $\partial D$.

The Poincar$\acute{\text{e}}$ inequality for the case of an open ball is well-known. Let $1 \leq q \leq \infty$ and $B \subset \mathbf{R}^n$. For any $g \in W^{1,q}(B)$, we have that
\[
\| g - g_B \|_{L^q(B)} \leq C(n,q) r(B) \| \nabla g \|_{L^q(B)}, \quad g_B = \frac{1}{|B|} \int_B g(y) \, dy,
\]
where $r(B)$ denotes the radius of $B$; see e.g. \cite[\S 5.8, Th. 2]{Evans}.
As a result, for any open domain $D \subseteq \mathbf{R}^n$ and $g \in W^{1,n}(D)$, we have the estimate
\begin{align} \label{EWBMO}
[g]_{BMO^\infty(D)} \leq \sup_{B \subseteq D} \frac{1}{|B|^{\frac{1}{n}}} \| g - g_B \|_{L^n(B)} \leq C(n) \| \nabla g \|_{L^n(D)}.
\end{align}
Indeed, for any open ball $B \subseteq D$, by H$\ddot{\text{o}}$lder's inequality we see that
\[
\| g - g_B \|_{L^1(B)} \leq C(n) r(B)^{n-1} \| g - g_B \|_{L^n(B)}
\]
where $r(B)$ denotes the radius of $B$. Then the Poincar$\acute{\text{e}}$ inequality for an open ball implies that
\[
\frac{1}{|B|} \| g - g_B \|_{L^1(B)} \leq \frac{C(n)}{r(B)} \| g - g_B \|_{L^n(B)} \leq C(n) \| \nabla g \|_{L^n(B)}
\]
for any open ball $B \subseteq D$.
Therefore, estimate (\ref{EWBMO}) follows and we conclude that for any open domain $D \subseteq \mathbf{R}^n$, the Sobolev space $W^{1,n}(D)$ continuously embeds into $BMO^\infty(D)$.
%%%%%%

%%%%%%
\section{Boundedness of the Helmholtz projection in $vBMOL^2$} \label{sec:3}
Let $\Omega \subset \mathbf{R}^n$ be a uniformly $C^3$ domain of type $(\alpha,\beta,K)$. We consider $f \in vBMOL^2(\Omega)$. Within this whole chapter, we take
\[
\varepsilon < \mathrm{min} \bigg\{ \frac{\beta}{96}, \frac{c_{1/2}^\Omega}{7}, \frac{M_0}{12nK}, \frac{1}{1152nK + 48},  \frac{c_0(n,K)}{12} \bigg\},
\]
where the constant $M_0$ will be defined in the proof of Lemma \ref{FBL} in Section \ref{sub:EUB} and the constant $c_0(n,K)$ is defined in Lemma \ref{EBP}.
%%%
\subsection{The $BMO-L^r$ interpolation in domain} \label{sub:IHT}
Let us firstly recall a dyadic Whitney decomposition of an open domain $D$ in $\mathbf{R}^n$.
Let $\mathcal{D}=\{Q_j\}_{j\in\mathbf{N}}$ be a set of dyadic closed cubes with side length $\ell(Q_j)$ contained in $D$ satisfying following four conditions.
\begin{enumerate}
\item[(i)] $D = \cup_j Q_j$,  
\item[(i\hspace{-0.1em}i)] $\operatorname{int}Q_j \cap \operatorname{int}Q_k = \emptyset$ if $j=k$,  
\item[(i\hspace{-0.1em}i\hspace{-0.1em}i)] $\sqrt{n} \leq d(Q_j, \mathbf{R}^n \setminus D) / \ell(Q_j) \leq 4\sqrt{n}$ for all $j \in \mathbf{N}$,  
\item[(i\hspace{-0.1em}v)] $1/4 \leq \ell(Q_k) / \ell(Q_j) \leq 4$ if $Q_j \cap Q_k \neq \emptyset$.
\end{enumerate}
We say that $\mathcal{D}$ is called a Whitney decomposition of $D$.
Such a decomposition exists for any open domain $D$; see e.g. \cite[Appendix J]{GraC}.
 Here $d(A,B)$ for sets $A,B$ in $\mathbf{R}^n$ is defined as $d(A,B) = \operatorname{inf} \left\{ |x-y| \bigm| x \in A,\ y \in B \right\}$.

There are two important distance functions on $\mathcal{D}$.
For $Q_j,Q_k \in \mathcal{D}$, a family $\left\{ Q(\ell) \right\}^m_{\ell=0} \subset \mathcal{D}$ is called a Whitney chain of length $m$ if $Q(0)=Q_j$ and $Q(m)=Q_k$ such that $Q(\ell) \cap Q(\ell+1) \neq \emptyset$ for $\ell$ with $0 \leq \ell \leq m-1$.
Then the length of the shortest Whitney chain connecting $Q_j$ and $Q_k$ gives a distance on $\mathcal{D}$, which is denoted by $d_1(Q_j,Q_k)$.
The second distance for $Q_j,Q_k \in \mathcal{D}$ is defined as
\[
	d_2(Q_j,Q_k) := \log \left| \frac{\ell(Q_j)}{\ell(Q_k)} \right| + \log \left| \frac{\ell(Q_j,Q_k)}{\ell(Q_j) + \ell(Q_k)} + 1 \right|.
\]
Note that $d_1$ and $d_2$ are invariant under dilation as well as translation and rotation; see e.g. \cite{PJB}.

We then recall that a domain $D$ is called a uniform domain if there exists constants $a,b > 0$ such that for all $x,y \in D$, there exists a rectifiable curve $\gamma \subset D$ of length $\ell(\gamma) \leq a |x-y|$ with $\mathrm{min} \{ \ell\big( \gamma(x,z) \big), \ell\big( \gamma(y,z) \big) \} \leq b d(z, \partial D)$, where $\gamma(x,z)$ denotes the part of $\gamma$ between $x$ and $z$ and $\gamma(y,z)$ denotes the part between $y$ and $z$; see e.g. \cite{GO}, \cite{MaSa}. A domain $D$ is a uniform domain is equivalent to the existence of a constant $K_\ast>0$ such that
\begin{align} \label{CCUD}
d_1(Q_j,Q_k) \leq K_\ast d_2(Q_j,Q_k)
\end{align}
holds for any $Q_j, Q_k \in \mathcal{D}$ where $\mathcal{D}$ is a Whitney decomposition for $D$. Jones \cite{PJB} proves that there exists a bounded linear extension operator for $BMO^\infty(D)$ if and only if $D$ is a uniform domain. 
\begin{proposition} \label{ETBL}
Let $D$ be a uniform domain in $\mathbf{R}^n$. Let $1 \leq r < \infty$. For any $\delta>0$, there exists a constant $C=C(K_\ast, \delta)>0$ such that for any $g \in BMOL^r(D)$, there is a linear extension $\overline{g} \in BMOL^r(\mathbf{R}^n)$ such that
\[
\| \overline{g} \|_{BMOL^r(\mathbf{R}^n)} \leq C(K_\ast, \delta) \| g \|_{BMOL^r(D)}
\]
and $\operatorname{supp} \overline{g} \subseteq \overline{D_\delta}$ where $D_\delta := \{ x \in \mathbf{R}^n \bigm| d(x, D) < \delta\}$.
\end{proposition}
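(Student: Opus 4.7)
The plan is to adapt Jones' construction \cite{PJB} for the $BMO$ extension on a uniform domain, modified so as to preserve the $L^r$ piece of the norm and to enforce the support condition $\operatorname{supp}\overline{g}\subseteq\overline{D_\delta}$. First I would take dyadic Whitney decompositions $\mathcal{D}_1$ of $D$ and $\mathcal{D}_2$ of $\mathbf{R}^n\setminus\overline{D}$ together with a smooth partition of unity $\{\varphi_Q\}_{Q\in\mathcal{D}_2}$ subordinate to a mild dilation of each $Q\in\mathcal{D}_2$ with the usual bounded-overlap and derivative estimates. The uniform domain hypothesis gives, for every $Q\in\mathcal{D}_2$ of sufficiently small side length, a reflected cube $Q^{\ast}\in\mathcal{D}_1$ with $\ell(Q^{\ast})\asymp \ell(Q)$ and $d(Q,Q^{\ast})\le C_{n}\,\ell(Q)$, and the map $Q\mapsto Q^{\ast}$ has bounded multiplicity depending only on $n$. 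To obtain the support restriction I truncate this construction at scale $\delta$: choosing $c_{n}$ so that the Whitney property $d(Q,D)\le 4\sqrt{n}\,\ell(Q)$ forces $\overline{Q}\subset\overline{D_\delta}$ whenever $\ell(Q)\le c_{n}\delta$, set
\[
\overline{g}(x):=g(x)\text{ for }x\in D,\qquad \overline{g}(x):=\sum_{\substack{Q\in\mathcal{D}_2\\ \ell(Q)\le c_{n}\delta}}\varphi_Q(x)\,g_{Q^{\ast}}\text{ for }x\notin\overline{D}.
\]

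The $L^r$ bound is then straightforward. On each contributing cube $Q$, H\"older's inequality gives $|g_{Q^{\ast}}|\le |Q^{\ast}|^{-1/r}\|g\|_{L^r(Q^{\ast})}$, hence $\|\overline{g}\|_{L^r(Q)}^{r}\lesssim (|Q|/|Q^{\ast}|)\|g\|_{L^r(Q^{\ast})}^{r}$. Since $\ell(Q^{\ast})\asymp \ell(Q)$ and the pairing $Q\mapsto Q^{\ast}$ has bounded multiplicity, summing over $Q$ yields $\|\overline{g}\|_{L^r(\mathbf{R}^n)}\le C(n)\,\|g\|_{L^r(D)}$.

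The heart of the argument is the $BMO^{\infty}(\mathbf{R}^n)$ estimate, and this is precisely where the uniform-domain characterization (\ref{CCUD}) enters. For a ball $B\subset\mathbf{R}^n$ I would compare $\overline{g}$ on $B$ to a single constant of the form $g_{Q_0^{\ast}}$ for a well-chosen contributing cube $Q_0$ meeting $B$, and bound
\[
\frac{1}{|B|}\int_{B}\bigl|\overline{g}(y)-g_{Q_0^{\ast}}\bigr|\,dy\le C(K_\ast)\,[g]_{BMO^{\infty}(D)}.
\]
The contribution from $B\cap D$ is controlled directly by $[g]_{BMO^\infty(D)}$ via Jones' standard aggregation over Whitney cubes. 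On $B\setminus\overline{D}$ the triangle inequality reduces matters to bounding $|g_{Q^{\ast}}-g_{Q_0^{\ast}}|$ for cubes $Q\in\mathcal{D}_2$ whose support meets $B$; telescoping $g_{Q^{\ast}}-g_{Q_0^{\ast}}$ along the shortest Whitney chain in $\mathcal{D}_1$ between $Q^{\ast}$ and $Q_0^{\ast}$ gives $|g_{Q^{\ast}}-g_{Q_0^{\ast}}|\lesssim d_1(Q^{\ast},Q_0^{\ast})\,[g]_{BMO^{\infty}(D)}$, and (\ref{CCUD}) upgrades this to $K_\ast d_2(Q^{\ast},Q_0^{\ast})\,[g]_{BMO^\infty(D)}$. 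Since $d_2$ is logarithmic in the side-length ratios and displacements of the Whitney cubes, the sum against the partition of unity converges with a bound depending only on $K_\ast$ and $n$.

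The main obstacle, and the source of the $\delta$-dependence in the constant, is handling balls $B$ whose radius is comparable to or larger than $\delta$, since the truncation at scale $c_n\delta$ is not invisible at this scale: cubes $Q\in\mathcal{D}_2$ with $\ell(Q)>c_n\delta$ have been dropped, so the Whitney chains used in the previous paragraph may fail to reach $Q_0^\ast$ inside the region that is actually charged. For such $B$, I instead use that $\overline{g}$ vanishes on $B\setminus\overline{D_\delta}$, so
\[
\frac{1}{|B|}\int_{B}\bigl|\overline{g}-(\overline{g})_B\bigr|\,dy\le \frac{2}{|B|}\int_{B\cap D_\delta}|\overline{g}|\,dy\le \frac{2}{|B|}|B\cap D_\delta|^{1/r'}\|\overline{g}\|_{L^r(\mathbf{R}^n)},
\]
and combine with the $L^r$ bound from the second paragraph; the resulting estimate depends on $\delta$ through negative powers of $\delta$ arising from $|B\cap D_\delta|^{1/r'}/|B|$. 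Patching this large-scale estimate with the small-scale Jones estimate of the previous paragraph yields the claimed bound $\|\overline{g}\|_{BMOL^r(\mathbf{R}^n)}\le C(K_\ast,\delta)\|g\|_{BMOL^r(D)}$, and linearity of the construction in $g$ is transparent from the definition of $\overline{g}$.
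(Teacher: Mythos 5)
Your strategy is recognizably Jones' extension, but the mechanism you use to enforce the support condition is genuinely different from the paper's, and it introduces a gap that the paper's device avoids entirely.

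The paper does not truncate the extension. Instead it first subtracts from $g$ the piecewise-constant function $h^\ast:=\sum_{Q_i\in\mathcal{D}_\ast}g_{Q_i}\chi_{Q_i}$, where $\mathcal{D}_\ast$ is the family of Whitney cubes of $D$ with side length exceeding $2^{-k_\delta}\sim\delta$; $h^\ast$ is both $L^\infty$ and $L^r$ because the large side length and H\"older give $|g_{Q_i}|\lesssim\delta^{-n/r}\|g\|_{L^r(D)}$. The remainder $g^\ast=g-h^\ast$ has \emph{zero} average over every large Whitney cube of $D$, so when the standard Jones extension assigns $g^\ast_{Q_j}$ to a complementary Whitney cube $Q_j'$ (with $Q_j$ the matched cube in $\mathcal{D}$, $\ell(Q_j)\ge\ell(Q_j')$), the assigned value vanishes whenever $\ell(Q_j')>2^{-k_\delta}$. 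Thus $\widetilde{g^\ast}$ is the \emph{unmodified} Jones extension of a $BMO$ function and its $BMO(\mathbf{R}^n)$ bound is exactly Jones' theorem, while the support condition holds because small complementary Whitney cubes automatically sit in $D_\delta$. The extension is then $\overline g=\widetilde{g^\ast}+h^\ast$. The $L^r$ bound for $\widetilde{g^\ast}$ comes from the bounded multiplicity of the matching $Q_j'\mapsto Q_j$, with multiplicity $\lesssim K_\ast^{2n}$ (not merely a constant depending on $n$, as you asserted; Jones' bound $d(Q_j,Q_j')\le 65K_\ast^2\ell(Q_j')$ is what controls it).

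Your approach instead discards all complementary cubes of side $>c_n\delta$ from the sum defining $\overline g$. This creates an interface inside $D_\delta\setminus D$ across which $\overline g$ drops from values of size $|g_{Q^\ast}|$ to $0$, and this interface is visible to \emph{small} balls, not only to balls of radius $\gtrsim\delta$. Concretely, a ball $B$ of radius $r_B\ll\delta$ straddling a contributing cube $Q'$ (with $c_n\delta/4<\ell(Q')\le c_n\delta$) and an adjacent discarded cube $Q$ cannot be handled by your small-ball Jones chain argument (the chain would pass through discarded cubes whose values are $0$, not $g_{Q^\ast}$), and your large-ball $L^r$ estimate $\frac{2}{|B|^{1/r}}\|\overline g\|_{L^r}$ is useless as $r_B\to0$. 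The bound that rescues this case is an $L^\infty$ bound on $\overline g$ near the interface, $\|\overline g\|_{L^\infty}\lesssim\delta^{-n/r}\|g\|_{L^r(D)}$ there (because the only contributing $g_{Q^\ast}$ near the interface come from cubes with $\ell(Q^\ast)\gtrsim\delta$), which is exactly the role that $h^\ast$ plays in the paper's argument. Your proof can be repaired along these lines, but as written the dichotomy ``small balls: Jones chains / large balls: $L^r$'' is incomplete; the paper's $h^\ast$-preprocessing is precisely the device that dissolves this boundary case instead of having to patch it.
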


This proposition is analogous to \cite[Theorem 12]{GigaGuPA}, which proves that in the case where $D$ is a uniform domain, for any $g$ belongs to $bmo_\infty^\infty(D)$ and $\delta>0$, there is an linear extension $\overline{g} \in bmo(\mathbf{R}^n)$ such that
\[
\| \overline{g} \|_{bmo(\mathbf{R}^n)} \leq C(K_\ast,\delta) \| g \|_{bmo_\infty^\infty(D)}
\]
and $\operatorname{supp} \overline{g} \subseteq \overline{D_\delta}$.
Here $bmo_\infty^\infty(D) := BMO^\infty(D) \cap L_{\mathrm{ul}}^1(D)$ denotes the local $BMO$ space where
\[
L^1_{\mathrm{ul}}(D) := \left\{ g \in L^1_{\mathrm{loc}}(D) \biggm| \| g \|_{L^1_{\mathrm{ul}}(D)} := \sup_{x \in \mathbf{R}^n} \int_{B_1(x) \cap D} \bigl| g(y) \bigr| \, dy < \infty \right\}
\]
and $bmo(\mathbf{R}^n) := BMO(\mathbf{R}^n) \cap L_{\mathrm{ul}}^1(\mathbf{R}^n)$. Proposition \ref{ETBL} can be proved by following similar idea of the proof of \cite[Theorem 12]{GigaGuPA}, which is basically the idea of Jones extension \cite{PJB}. For the completeness of the story, we shall give a sketch of the proof of Proposition \ref{ETBL} here.

\begin{proof}[Proof of Proposition \ref{ETBL}]
Fix $\delta>0$ and $g \in BMOL^r(D)$. We set $k_\delta$ to be the smallest integer such that $2^{-k_\delta} < \frac{\delta}{5\sqrt{n}}$.
Let $\mathcal{D} = \{ Q_j \}_{j \in \mathbf{N}}$ be a Whitney decomposition of $D$ and $\mathcal{D}' = \{ Q_j' \}_{j \in \mathbf{N}}$ be a Whitney decomposition of $D^{\mathrm{c}}$.
Let $\mathcal{D}_\ast$ be the set of Whitney cubes in $\mathcal{D}$ whose side length is strictly greater than $2^{-k_\delta}$. For each $Q_i \in \mathcal{D}_\ast$, we define a function $g_i$ on $D$ by
\begin{eqnarray*}
	g_i(x) := \left \{
\begin{array}{ll}
	g_{Q_i} & \mathrm{if} \; \, x \in Q_i, \\
	0 & \mathrm{else}
\end{array}
\right.
\end{eqnarray*}
where $g_{Q_i} = \frac{1}{|Q_i|} \int_{Q_i} g(y) \, dy$ for each $Q_i \in \mathcal{D}_\ast$. 
We further define a function $h$ on $\mathbf{R}^n$ by $h^\ast := \sum_{Q_i \in \mathcal{D}_\ast} g_i$.
Note that for any $Q_i \in \mathcal{D}_\ast$, we have that $\ell(Q_i) > 2^{-k_\delta} \geq \frac{\delta}{10 \sqrt{n}}$. Hence, by H$\ddot{\text{o}}$lder's inequality we deduce that
\[
\| h^\ast \|_{L^\infty(\mathbf{R}^n)} \leq \sum_{Q_i \in \mathcal{D}_\ast} \big| g_{Q_i} \big| \leq \sum_{Q_i \in \mathcal{D}_\ast} \frac{1}{|Q_i|^{\frac{1}{r}}} \cdot \| g \|_{L^r(Q_i)} \leq \frac{C(r,n)}{\delta^{\frac{n}{r}}} \| g \|_{L^r(D)}
\]
and
\[
\| h^\ast \|_{L^r(\mathbf{R}^n)} \leq \sum_{Q_i \in \mathcal{D}_\ast} \| g_i \|_{L^r(Q_i)} \leq \sum_{Q_i \in \mathcal{D}_\ast} |Q_i|^{\frac{1}{r}} \cdot \big| g_{Q_i} \big| \leq \sum_{Q_i \in \mathcal{D}_\ast} \| g \|_{L^r(Q_i)} \leq \| g \|_{L^r(D)},
\]
i.e., we have that $h^\ast \in BMOL^r(\mathbf{R}^n)$.

Let $g^\ast := g - h^\ast \in BMOL^r(D)$. We do Jones extension to $g^\ast$. If $D$ is unbounded, for each $Q_j^{'} \in \mathcal{D}^{'}$, we find a nearset $Q_j \in \mathcal{D}$ satisfying $\ell(Q_j) \geq \ell(Q_j^{'})$. 
We define that $\widetilde{g^\ast} = g^\ast$ in $D$ and $\widetilde{g^\ast}(x) = g^\ast_{Q_j}$ for $x \in Q_j^{'}$. 
If $D$ is bounded, we pick $Q_0 \in \mathcal{D}$ such that $\ell(Q_0) = \underset{Q_j \in \mathcal{D}}{\mathrm{sup}} \ell(Q_j)$. 
We define that $\widetilde{g^\ast} = g^\ast$ in $D$, $\widetilde{g^\ast}(x) = g^\ast_{Q_j}$ for $x \in Q_j^{'}$ if $\ell(Q_j^{'}) \leq \ell(Q_0)$ where $Q_j$ in this case is the nearest Whitney cube in $\mathcal{D}$ to $Q_j'$ which satisfies $\ell(Q_j) \geq \ell(Q_j')$
and $\widetilde{g^\ast}(x) = g^\ast_{Q_0}$ for $x \in Q_j^{'}$ if $\ell(Q_j') > \ell(Q_0)$. By Jones \cite{PJB}, we have that $\widetilde{g^\ast} \in BMO(\mathbf{R}^n)$ and $[\widetilde{g^\ast}]_{BMO(\mathbf{R}^n)} \leq C(K_\ast)[g^\ast]_{BMO^\infty(D)}$.
By our construction, we can easily see that $\operatorname{supp} \widetilde{g^\ast} \subseteq \overline{D_\delta}$. It is sufficient to establish the $L^r$ estimate for $\widetilde{g^\ast}$.

For $Q_j \in \mathcal{D}$, we define $N'(Q_j) \in \mathbf{N}$ to be the number of $Q_j'$ in $\mathcal{D}'$ such that we can pick $Q_j$ as the nearest Whitney cube in $\mathcal{D}$ to these $Q_j'$ which satisfies $\ell(Q_j) \geq \ell(Q_j')$. We can show that
\[
\sup_{Q_j \in \mathcal{D}} N'(Q_j) < \infty.
\]
Let $Q_j' \in \mathcal{D}'$ and $Q_j$ be a nearest cube to $Q_j'$ in $\mathcal{D}$ which satisfies $\ell(Q_j) \geq \ell(Q_j')$. Let the center of $Q_j$ be denoted by $x_j$. 
For any cube $Q$, let the diameter of $Q$ be denoted by $\mathrm{diam}(Q)$, i.e., $\mathrm{diam}(Q) := \sup_{x,y \in \overline{Q}} \big| x-y \big|$.
A simple triangle inequality implies that $Q_j' \subset B_{\mathrm{diam}(Q_j) + \mathrm{diam}(Q_j') + d(Q_j,Q_j')}(x_j)$.
\cite[Lemma 2.10]{PJB} guarantees that if $Q_j' \in \mathcal{D}'$ and $Q_j$ is a nearest cube to $Q_j'$ in $\mathcal{D}$ which satisfies $\ell(Q_j) \geq \ell(Q_j')$, then we must have $d(Q_j,Q_j') \leq 65 K_\ast^2 \cdot \ell(Q_j')$. Hence, we may deduce that
\[
\mathrm{diam}(Q_j) + \mathrm{diam}(Q_j') + d(Q_j,Q_j') \leq 67 K_\ast^2 \cdot \ell(Q_j)
\]
as we may assume that $K_\ast > n$ without loss of generality.
Suppose that $Q_j'' \in \mathcal{D}'$ is another Whitney cube such that we can pick $Q_j$ as a nearest cube to $Q_j''$ in $\mathcal{D}$. Then, we must have that $Q_j' \cup Q_j'' \subset B_{67 K_\ast^2 \cdot \ell(Q_j)}(x_j)$.
Note that $Q_j$ is a nearest cube in $\mathcal{D}$ to both $Q_j'$ and $Q_j''$ implies that
\[
\ell(Q_j') \leq \ell(Q_j) \leq 2 \ell(Q_j'), \quad \ell(Q_j'') \leq \ell(Q_j) \leq 2 \ell(Q_j'').
\]
Since $\mathcal{D}$ and $\mathcal{D}'$ are both dyadic Whitney decompositions, $\ell(Q_j')$ and $\ell(Q_j'')$ can only be either $\ell(Q_j)/2$ or $\ell(Q_j)$.
Since $Q_j'$ and $Q_j''$ have disjoint interior, we deduce that
\[
N'(Q_j) \leq (2^n +1) \cdot \frac{\big| B_{67 K_\ast^2 \cdot \ell(Q_j)}(x_j) \big|}{\ell(Q_j)^n} \leq (2^n+1) 67^n K_\ast^{2n} < \infty.
\]

The $L^r$ estimate for $\widetilde{g^\ast}$ can be derived as follows. Since $\widetilde{g^\ast} = 0$ in $Q_j' \in \mathcal{D}'$ such that $\ell(Q_j') > 2^{-k_\delta}$, we have that
\[
\| \widetilde{g^\ast} \|_{L^r(\mathbf{R}^n)}^r = \sum_{Q_j \in \mathcal{D}} \int_{Q_j} \big| \widetilde{g^\ast} \big|^r \, dy + \sum_{Q_j' \in \mathcal{D}'} \int_{Q_j'} \big| \widetilde{g^\ast} \big|^r \, dy \leq \| g^\ast \|_{L^r(D)}^r + \sum_{\ell(Q_j') \leq 2^{-k_\delta}} \int_{Q_j'} \big| \widetilde{g^\ast} \big|^r \, dy.
\]
Since 
\[
\int_{Q_j'} \big| \widetilde{g^\ast} \big|^r \, dy = |Q_j'| \cdot \big| g^\ast_{Q_j} \big|^r \leq \frac{|Q_j'|}{|Q_j|} \cdot \| g^\ast \|_{L^r(Q_j)}^r \leq \| g^\ast \|_{L^r(Q_j)}^r
\]
where $Q_j$ is a nearest Whitney cube in $\mathcal{D}$ to $Q_j'$ that satisfies $\ell(Q_j) \geq \ell(Q_j')$, we have that
\begin{align} \label{L2Eg}
\sum_{\ell(Q_j') \leq 2^{-k_\delta}} \int_{Q_j'} \big| \widetilde{g^\ast} \big|^r \, dy \leq \sum_{\ell(Q_j') \leq 2^{-k_\delta}} \| g^\ast \|_{L^r(Q_j)}^r.
\end{align}
Since for any $Q_j \in \mathcal{D}$, there exist at most $(2^n+1) 67^n K_\ast^{2n}$ Whitney cubes $Q_j'$ in $\mathcal{D}'$ such that we can pick $Q_j$ as a nearest cube in $\mathcal{D}$ to $Q_j'$ which satisfies $\ell(Q_j) \geq \ell(Q_j')$, each $\| g^\ast \|_{L^r(Q_j)}^r$ appears in the right hand side of (\ref{L2Eg})
for at most $(2^n+1) 67^n K_\ast^{2n}$ times. Therefore,
\[
\sum_{\ell(Q_j') \leq 2^{-k_\delta}} \| g^\ast \|_{L^r(Q_j)}^r \leq (2^n+1) 67^n K_\ast^{2n} \sum_{\ell(Q_j) \leq 2^{-k_\delta+1}} \| g^\ast \|_{L^r(Q_j)}^r \leq (2^n+1) 67^n K_\ast^{2n} \| g^\ast \|_{L^r(D)}^r.
\]
Finally, by setting $\overline{g} = \widetilde{g^\ast} + h^\ast$, we obtain our desired extension for $g$.
\end{proof}
Now we are ready to establish the $BMO-L^r$ interpolation inequality for domain cases. 
\begin{proposition} \label{ITP}
Let $D$ be either a uniformly $C^2$ domain or a uniform domain in $\mathbf{R}^n$. There exists a constant $C>0$, independent of $u$, $q$ and $r$, such that the estimate
\begin{align} \label{ITPE}
\| u \|_{L^q(D)} \leq C q \| u \|_{BMOL^r(D)}
\end{align}
holds for all $u \in BMOL^r(D)$ with $1 \leq r < \infty$ and for all $q$ with $r \leq q < \infty$. In the case where $D$ is a uniformly $C^2$ domain, the constant $C=C(\alpha_D, \beta_D, K_D)$ depends only on the boundary regularity of $\partial D$. While in the case where $D$ is a uniform domain, the constant $C=C(K_\ast)$ depends only on the constant $K_\ast$.
\end{proposition}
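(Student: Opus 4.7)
The plan is to reduce the domain inequality to its known counterpart in the whole space $\mathbf{R}^n$ by means of a bounded linear extension operator for $BMOL^r$. Concretely, I would proceed in three stages.

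First, the extension step. If $D$ is a uniform domain, fix some $\delta>0$ (say $\delta=1$) and apply Proposition \ref{ETBL} to $u\in BMOL^r(D)$ to obtain $\overline{u}\in BMOL^r(\mathbf{R}^n)$ with $\overline{u}|_D=u$ and
\begin{equation*}
\|\overline{u}\|_{BMOL^r(\mathbf{R}^n)} \leq C(K_\ast)\|u\|_{BMOL^r(D)}.
\end{equation*}
If instead $D$ is a uniformly $C^2$ domain of type $(\alpha_D,\beta_D,K_D)$, I would invoke the bounded linear $BMOL^r$ extension operator from \cite{Gu} already cited in the introduction, whose operator norm depends only on $(\alpha_D,\beta_D,K_D)$; alternatively, one can verify directly, using the local graph representation and the standard Boman--Jones chain argument, that $D$ is a uniform domain with parameter $K_\ast=K_\ast(\alpha_D,\beta_D,K_D)$, and then appeal to Proposition \ref{ETBL}. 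Either route yields an extension with constant $C=C(\alpha_D,\beta_D,K_D)$.

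Second, the whole-space interpolation step. Apply the $BMO$-$L^r$ interpolation inequality of Kozono--Wadade \cite{KW} to $\overline{u}$: for $1\leq r<\infty$ and $r\leq q<\infty$,
\begin{equation*}
\|\overline{u}\|_{L^q(\mathbf{R}^n)} \leq C\, q^{1-r/q}\,\|\overline{u}\|_{L^r(\mathbf{R}^n)}^{r/q}\, [\overline{u}]_{BMO(\mathbf{R}^n)}^{1-r/q},
\end{equation*}
with $C$ independent of $u$ and $q$. Using the weighted AM-GM inequality $a^{\theta}b^{1-\theta}\leq \theta a+(1-\theta) b\leq a+b$ with $\theta=r/q$, and the trivial bound $q^{1-r/q}\leq q$, the right-hand side is dominated by $Cq\,\|\overline{u}\|_{BMOL^r(\mathbf{R}^n)}$.

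Third, the restriction step is immediate: since $\overline{u}|_D=u$, $\|u\|_{L^q(D)}\leq \|\overline{u}\|_{L^q(\mathbf{R}^n)}$. Chaining the three bounds produces
\begin{equation*}
\|u\|_{L^q(D)} \leq Cq\,\|\overline{u}\|_{BMOL^r(\mathbf{R}^n)} \leq C'q\,\|u\|_{BMOL^r(D)},
\end{equation*}
with $C'$ depending only on $(\alpha_D,\beta_D,K_D)$ in the $C^2$ case or on $K_\ast$ in the uniform-domain case, which is exactly \eqref{ITPE}. The only genuinely delicate point is ensuring that the extension constant in Step 1 is controlled by the advertised parameters; for uniform domains this is the content of Proposition \ref{ETBL}, while for uniformly $C^2$ domains it is covered by \cite{Gu}. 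Everything else is a routine combination of these inputs with Young's inequality.
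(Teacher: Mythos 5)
Your proposal matches the paper's proof essentially step for step: extend $u$ from $D$ to $\mathbf{R}^n$ via Proposition~\ref{ETBL} in the uniform-domain case and via the $BMOL^r$ extension theorem of \cite{Gu} in the uniformly $C^2$ case, apply the Kozono--Wadade whole-space $BMO$-$L^r$ interpolation inequality \cite{KW} to the extension, and restrict back to $D$. The small cosmetic difference (your factor $q^{1-r/q}$ versus the paper's $q$ directly from \cite[Theorem 2.2]{KW}) is harmless since $q^{1-r/q}\leq q$, and both arguments close the estimate with the same weighted AM--GM bound $a^{r/q}b^{1-r/q}\leq a+b$.
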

\begin{proof}
Let $1 \leq r < \infty$ and $r \leq q < \infty$. 
Suppose that $D$ is a uniformly $C^2$ domain of type $(\alpha_D, \beta_D, K_D)$. By \cite[Theorem 1]{Gu}, we see that there exists $c^\ast_D>0$ such that for any $\rho \in (0,c^\ast_D)$ and $u \in BMOL^r(D)$, there is an extension $\widetilde{u} \in BMOL^r(\mathbf{R}^n)$ such that
\[
\| \widetilde{u} \|_{BMOL^r(\mathbf{R}^n)} \leq C(\alpha_D, \beta_D, K_D, \rho) \| u \|_{BMOL^r(D)}.
\]
By the whole space $BMO-L^r$ interpolation inequality \cite[Theorem 2.2]{KW}, we deduce that
\begin{align*}
\| u \|_{L^q(D)} &\leq \| \widetilde{u} \|_{L^q(\mathbf{R}^n)} \leq C(n) q \| \widetilde{u} \|_{L^r(\mathbf{R}^n)}^\frac{r}{q} \| \widetilde{u} \|_{BMO(\mathbf{R}^n)}^{1 - \frac{r}{q}} \\ 
&\leq C(n)q \| \widetilde{u} \|_{BMOL^r(\mathbf{R}^n)} \leq C(\alpha_D, \beta_D, K_D) q \| u \|_{BMOL^r(D)}.
\end{align*}

Suppose that $D$ is a uniform domain. By Proposition \ref{ETBL}, we see that for any $u \in BMOL^r(D)$, there exists a linear extension $\overline{u} \in BMOL^r(\mathbf{R}^n)$ such that
\[
\| \overline{u} \|_{BMOL^r(\mathbf{R}^n)} \leq C(K_\ast) \| u \|_{BMOL^r(D)}
\]
where $K_\ast$ is the constant that characterize the uniform domain $D$. Again, by applying the whole space $BMO-L^r$ interpolation inequality \cite[Theorem 2.2]{KW}, we obtain that
\[
\| u \|_{L^q(D)} \leq C(n) q \| \overline{u} \|_{L^r(\mathbf{R}^n)}^\frac{r}{q} \| \overline{u} \|_{BMO(\mathbf{R}^n)}^{1 - \frac{r}{q}} \leq C(n) q \| \overline{u} \|_{BMOL^r(\mathbf{R}^n)} \leq C(K_\ast, n) q \| u \|_{BMOL^r(D)}.
\] 
\end{proof}

By Proposition \ref{ITP}, $f \in vBMOL^2(\Omega)$ implies that $f \in \widetilde{L}^{2n}(\Omega) := L^{2n}(\Omega)^n \cap L^2(\Omega)^n$. Since $\widetilde{L}^{2n}(\Omega)$ admits Helmholtz decomposition, see \cite[Theorem 2.1]{FKS} and \cite[Theorem 1.2]{FKSH}, $f$ admits the decomposition
\[
f = f_0 + \nabla p, \quad f_0 \in \widetilde{L}^{2n}_\sigma(\Omega), \quad \nabla p \in \widetilde{G}^{2n}(\Omega)
\]
with the estimate
\begin{align} \label{HE2N}
\| f_0 \|_{\widetilde{L}^{2n}(\Omega)} + \| \nabla p \|_{\widetilde{L}^{2n}(\Omega)} \leq C \| f \|_{\widetilde{L}^{2n}(\Omega)}, \quad C = C(\alpha,\beta,K)>0.
\end{align}
In the following sections, we shall prove that this Helmholtz decomposition of $f$ in $\widetilde{L}^{2n}(\Omega)$, i.e., $f = f_0 + \nabla p$, is indeed the Helmholtz decomposition of $f$ in $vBMOL^2(\Omega)$.
%%%

%%%
\subsection{Interior $BMO$ estimate} \label{sub:IE}
Let $x \in \overline{\Omega_{3 \varepsilon}}$ be a random point and $\varphi_x \in C_{\mathrm{c}}^\infty(\mathbf{R}^n)$ be the cut-off function defined in Proposition \ref{CFB}.
We then follow the idea of \cite{FKS} and \cite{FKSH} and consider the local equation
\[
\varphi_x f = \varphi_x f_0 + \nabla \big( \varphi_x \big( p - M_x) \big) - \nabla \varphi_x (p-M_x), \quad M_x := \frac{1}{|B_{2 \varepsilon}(x)|} \int_{B_{2 \varepsilon}(x)} p \, dy.
\]

Since $C_{\mathrm{c}}^\infty(\overline{\Omega}) \subset G^{2n}(\Omega)$, $f_0 \in L^{2n}_\sigma(\Omega)$ implies that $f_0 \in L^{2n}(\Omega)^n$ with $\operatorname{div} f_0 = 0$ in $\Omega$ and $f_0 \cdot \mathbf{n} = 0$ on $\Gamma$. Since $\operatorname{supp} \varphi_x \subseteq \overline{B_{\frac{3 \varepsilon}{2}}(x)}$, by considering Green's formula in $B_{2 \varepsilon}(x)$, see e.g. \cite[Sec. II.1.2]{HSo}, we see that $\nabla \varphi_x \cdot f_0 \in L_0^{2n}\big( B_{2 \varepsilon}(x) \big) := \{ g \in L^{2n}\big( B_{2 \varepsilon}(x) \big) \bigm| \int_{B_{2 \varepsilon}(x)} g \, dy = 0 \}$. 
Let $B_{2n}^x : L_0^{2n}\big( B_{2 \varepsilon}(x) \big) \to W_0^{1,2n}\big( B_{2 \varepsilon}(x) \big)^n$ be the Bogovski$\breve{\i}$ operator which solves the divergence problem 
\[
\operatorname{div} u = g, \quad u \bigm|_{\partial B_{2 \varepsilon}(x)} = 0
\]
for $g \in L_0^{2n}\big( B_{2 \varepsilon}(x) \big)$. We then set $\omega_x := B_{2n}^x\big( \nabla \varphi_x \cdot f_0 \big)$. Since the open ball $B_{2 \varepsilon}(x)$ is obviously star-like with respect to the open ball $B_\varepsilon(x)$, by estimate (\ref{BGOE}) we have that
\begin{align} \label{BGOx}
\| \omega_x \|_{W^{1,2n}\big( B_{2 \varepsilon}(x) \big)} \leq C(n) \| \nabla \varphi_x \cdot f_0 \|_{L^{2n}\big( B_{2 \varepsilon}(x) \big)}.
\end{align}

Since $\overline{B_{2 \varepsilon}(x)} \subset \Omega$, the definition of $\widetilde{G}^{2n}(\Omega)$ ensures that $p$ belongs to the Sobolev space $W^{1,2}\big( B_{2 \varepsilon}(x) \big) \cap W^{1,2n}\big( B_{2 \varepsilon}(x) \big)$. By considering the $L^q$-interpolation, we see that $p \in W^{1,n}\big( B_{2 \varepsilon}(x) \big)$. Since $W^{1,n}\big( B_{2 \varepsilon}(x) \big) \hookrightarrow BMO^\infty\big( B_{2 \varepsilon}(x) \big)$, we indeed have that $p \in BMOL^2\big( B_{2 \varepsilon}(x) \big)$.

\begin{lemma} \label{FIL}
Let $x \in \overline{\Omega_{3 \varepsilon}}$.
There exists a constant $C_B>0$, which is independent of $x \in \overline{\Omega_{3 \varepsilon}}$, such that
\begin{equation} \label{FILE}
\begin{split}
&\| \varphi_x f_0 \|_{vBMOL^2\big( B_{2 \varepsilon}(x) \big)} + \| \varphi_x \nabla p \|_{vBMOL^2\big( B_{2 \varepsilon}(x) \big)} \\
&\ \ \leq C_B \bigg( \| \varphi_x f \|_{vBMOL^2\big( B_{2 \varepsilon}(x) \big)} + 2 \| \nabla \varphi_x (p - M_x) \|_{vBMOL^2\big( B_{2 \varepsilon}(x) \big)} + 2 \| \omega_x \|_{vBMOL^2\big( B_{2 \varepsilon}(x) \big)} \bigg).
\end{split}
\end{equation}
\end{lemma}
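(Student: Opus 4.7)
The plan is to read the identity (\ref{HDLBx}) as the Helmholtz decomposition of the vector field $g_x := \varphi_x f + \nabla \varphi_x (p - M_x) - \omega_x$ inside the ball $B_{2\varepsilon}(x)$, and then invoke the $vBMOL^2$-Helmholtz decomposition on a bounded $C^3$ domain proved in \cite{GigaGuMA}, making sure the constant does not depend on the center $x$.

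First I would check the two ingredients: (i) the field $\varphi_x f_0 - \omega_x$ belongs to $vBMOL^2_\sigma(B_{2\varepsilon}(x))$; (ii) $\nabla(\varphi_x(p-M_x)) \in GvBMOL^2(B_{2\varepsilon}(x))$. For (i) a direct computation gives $\operatorname{div}(\varphi_x f_0 - \omega_x) = \nabla \varphi_x \cdot f_0 - \nabla \varphi_x \cdot f_0 = 0$, and both $\varphi_x f_0$ and $\omega_x$ vanish on $\partial B_{2\varepsilon}(x)$ (the former because $\operatorname{supp} \varphi_x \subset \overline{B_{3\varepsilon/2}(x)}$, the latter because $\omega_x \in W^{1,2n}_0(B_{2\varepsilon}(x))^n$), so the normal trace is zero. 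For (ii) it suffices that $p \in W^{1,n}(B_{2\varepsilon}(x))$, which is already noted in the text through $L^q$ interpolation between $W^{1,2}$ and $W^{1,2n}$ plus the embedding $W^{1,n} \hookrightarrow BMO^\infty$. The $L^2$-parts of both terms are clear from $f_0, \nabla p \in L^2(\Omega)^n$ and the $L^{2n}$-regularity of $\omega_x$ via (\ref{BGOx}).

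Second, I would apply the $vBMOL^2$-Helmholtz decomposition for the smooth bounded domain $B_{2\varepsilon}(x)$ established in \cite{GigaGuMA}. The crucial observation is that the estimate constant is translation-invariant: after the translation $y \mapsto y - x$, every ball $B_{2\varepsilon}(x)$ becomes the fixed ball $B_{2\varepsilon}(0)$, and the $vBMOL^2$ norm is invariant under translation. Therefore the Helmholtz constant for $B_{2\varepsilon}(x)$ depends only on $\varepsilon$ (which is fixed throughout Chapter \ref{sec:3}) and not on $x \in \overline{\Omega_{3\varepsilon}}$. This gives a constant $C_B > 0$, independent of $x$, such that
\[
\| \varphi_x f_0 - \omega_x \|_{vBMOL^2(B_{2\varepsilon}(x))} + \| \nabla(\varphi_x (p-M_x)) \|_{vBMOL^2(B_{2\varepsilon}(x))} \leq C_B \| g_x \|_{vBMOL^2(B_{2\varepsilon}(x))}.
\]

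Finally I would conclude by two applications of the triangle inequality: on the left, $\varphi_x f_0 = (\varphi_x f_0 - \omega_x) + \omega_x$ and $\varphi_x \nabla p = \nabla(\varphi_x(p-M_x)) - \nabla \varphi_x (p - M_x)$; on the right, $\| g_x \|_{vBMOL^2(B_{2\varepsilon}(x))} \leq \| \varphi_x f \|_{vBMOL^2(B_{2\varepsilon}(x))} + \| \nabla \varphi_x(p-M_x) \|_{vBMOL^2(B_{2\varepsilon}(x))} + \| \omega_x \|_{vBMOL^2(B_{2\varepsilon}(x))}$. Combining these with the Helmholtz estimate above yields (\ref{FILE}) after (possibly) enlarging $C_B$ by an absolute factor.

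The only subtle point—and what I would call the main obstacle—is the claim that the $vBMOL^2$-Helmholtz constant on $B_{2\varepsilon}(x)$ can be chosen independently of $x$. Once one invokes translation invariance of the $vBMOL^2$ seminorm this is immediate, but it must be stated explicitly since the general bounded-domain result of \cite{GigaGuMA} only a priori provides a constant depending on the domain. Everything else is either a formal computation ($\operatorname{div}$, boundary trace) or a direct quotation of the existing Helmholtz decomposition theorem.
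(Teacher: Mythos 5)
You correctly identify the translation-invariance trick and the triangle-inequality endgame, and your reading of \eqref{HDLBx} is right, but there is a genuine gap in the way you propose to feed \eqref{HDLBx} into the bounded-domain theorem. You say ``First I would check the two ingredients: (i) the field $\varphi_x f_0 - \omega_x$ belongs to $vBMOL^2_\sigma(B_{2\varepsilon}(x))$; (ii) $\nabla(\varphi_x(p-M_x)) \in GvBMOL^2(B_{2\varepsilon}(x))$'' and then verify only $\operatorname{div}(\varphi_x f_0 - \omega_x)=0$, zero normal trace, and $L^2$ integrability. You never verify the $BMO^\infty$ seminorm of $\varphi_x f_0 - \omega_x$, and in fact you cannot: at this stage of the argument all we know is $f_0 \in \widetilde{L}^{2n}(\Omega)$, which does \emph{not} give $\varphi_x f_0 \in BMO^\infty(B_{2\varepsilon}(x))$ (a function in $L^{2n}$ of a bounded ball need not be in $BMO$). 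The $BMO$ regularity of $f_0$ and $\nabla p$ is exactly the conclusion being worked toward, so assuming membership of $\varphi_x f_0 - \omega_x$ in $vBMOL^2_\sigma$ as an ``ingredient'' is circular.

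The paper's proof avoids this by never touching $\varphi_x f_0 - \omega_x$ directly. It first verifies that $g_x := \varphi_x f + \nabla\varphi_x(p-M_x) - \omega_x \in vBMOL^2(B_{2\varepsilon}(x))$ (this \emph{can} be checked a priori: $\varphi_x f$ via the multiplication rule, $\nabla\varphi_x(p-M_x)$ because $p \in BMOL^2(B_{2\varepsilon}(x))$, and $\omega_x$ via Morrey), then invokes the $vBMOL^2$-Helmholtz decomposition of \cite{GigaGuMA} to produce components $f_{0,\ast}$ and $\nabla p_\ast$ that are automatically in the right $vBMO$ classes, and finally identifies $f_{0,\ast} = \varphi_x f_0 - \omega_x$ and $\nabla p_\ast = \nabla(\varphi_x(p-M_x))$ by comparing to the $\widetilde{L}^{2n}$-Helmholtz decomposition (via Proposition~\ref{ITP} and the uniqueness in $\widetilde{L}^{2n}$). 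This detour through $\widetilde{L}^{2n}$ is precisely what lets one conclude, a posteriori, that $\varphi_x f_0 - \omega_x$ does belong to $vBMOL^2_\sigma(B_{2\varepsilon}(x))$. Your proof should be restructured along these lines; as written, step (i) does not hold with the information available.

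A smaller point: for (ii) you invoke $p \in W^{1,n}$ and the embedding $W^{1,n}\hookrightarrow BMO^\infty$, but the membership condition $h \in L^\infty(B_{2\varepsilon}(x))$ in $GvBMOL^2(B_{2\varepsilon}(x))$ requires the stronger Morrey embedding $W^{1,2n}(B_{2\varepsilon}(x)) \hookrightarrow C^{0,1/2}(B_{2\varepsilon}(x))$, which does hold since $p \in W^{1,2n}(B_{2\varepsilon}(x))$. This is easily fixed, but the circularity issue above is the substantive one.
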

\begin{proof}
Similar to the discussion in Section \ref{sub:BMP}, by considering Jones extension for Sobolev spaces \cite{PJS}, we can see that Morrey's inequality (\ref{MOR}) also holds for domain $B_{2 \varepsilon}(x)$, i.e., the Sobolev space $W^{1,2n}\big( B_{2 \varepsilon}(x) \big)$ is continuously embedded into the H$\ddot{\text{o}}$lder space $C^{0,\frac{1}{2}}\big( B_{2 \varepsilon}(x) \big)$ with the estimate
\begin{align} \label{MORB}
\| g \|_{C^{0,\frac{1}{2}}\big( B_{2 \varepsilon}(x) \big)} \leq \frac{C(n)}{\varepsilon} \| g \|_{W^{1,2n}\big( B_{2 \varepsilon}(x) \big)}
\end{align}
holds for any $g \in W^{1,2n}\big( B_{2 \varepsilon}(x) \big)$. Hence, we have that $\omega_x = B_{2n}^x\big( \nabla \varphi_x \cdot f_0 \big) \in L^\infty\big( B_{2 \varepsilon}(x) \big)^n$. Since $\operatorname{supp} \varphi_x \subseteq \overline{B_{\frac{3 \varepsilon}{2}}(x)}$, we see that $\varphi_x f + \nabla \varphi_x (p - M_x) - \omega_x \in vBMOL^2\big( B_{2 \varepsilon}(x) \big)$.

Since $B_{2 \varepsilon}(x)$ is a bounded smooth domain, the Helmholtz decomposition of $vBMOL^2\big( B_{2 \varepsilon}(x) \big)$ holds, see \cite[Theorem 1]{GigaGuMA}. There exist $f_{0,\ast} \in vBMOL_\sigma^2\big( B_{2 \varepsilon}(x) \big)$ and $\nabla p_\ast \in GvBMOL^2\big( B_{2 \varepsilon}(x) \big)$ such that $\varphi_x f + \nabla \varphi_x (p - M_x) - \omega_x = f_{0,\ast} + \nabla p_\ast$ where
\begin{align*}
vBMOL_\sigma^2\big( B_{2 \varepsilon}(x) \big) &= \{ v \in vBMOL^2\big( B_{2 \varepsilon}(x) \big) \bigm| \operatorname{div} v = 0 \;\, \text{in} \;\, B_{2 \varepsilon}(x), v \cdot \mathbf{n} = 0 \;\, \text{on} \;\, \partial B_{2 \varepsilon}(x) \}, \\
GvBMOL^2\big( B_{2 \varepsilon}(x) \big) &= \big\{ \nabla h \in vBMOL^2\big( B_{2 \varepsilon}(x) \big) \bigm| h \in L^\infty\big( B_{2 \varepsilon}(x) \big) \big\}.
\end{align*}
By Proposition \ref{ITP}, we have that $f_{0,\ast}, \nabla p_\ast \in \widetilde{L}^{2n}\big( B_{2 \varepsilon}(x) \big)$. Note that for any bounded $C^1$ domain $D$, we have that
\[
\widetilde{L}_\sigma^{2n}(D) = \{ v \in \widetilde{L}^{2n}(D) \bigm| \operatorname{div} v = 0 \;\, \text{in} \;\, B_{2 \varepsilon}(x), v \cdot \mathbf{n} = 0 \;\, \text{on} \;\, \partial B_{2 \varepsilon}(x) \};
\]
see e.g. \cite{FKSH}, \cite{SiSo}. Hence, we see that $f_{0,\ast} \in \widetilde{L}_\sigma^{2n}\big( B_{2 \varepsilon}(x) \big)$ and $\nabla p_\ast \in \widetilde{G}^{2n}\big( B_{2 \varepsilon}(x) \big)$, i.e., $\varphi_x f + \nabla \varphi_x (p - M_x) - \omega_x = f_{0,\ast} + \nabla p_\ast$ is indeed the Helmholtz decomposition of $\varphi_x f + \nabla \varphi_x (p - M_x) - \omega_x$ in $\widetilde{L}^{2n}\big( B_{2 \varepsilon}(x) \big)$. Since the Helmholtz decomposition of $\widetilde{L}^{2n}(D)$ is unique for any uniformly $C^1$ domain $D$ \cite[Theorem 1.2]{FKSH}, we conclude that $f_{0,\ast} = \varphi_x f_0 - \omega_x$ and $\nabla p_\ast = \nabla \big( \varphi_x (p - M_x) \big)$. That means that
\[
\varphi_x f + \nabla \varphi_x (p - M_x) - \omega_x = \big( \varphi_x f_0 - \omega_x \big) + \nabla \big( \varphi_x (p - M_x) \big)
\]
is indeed the Helmholtz decomposition of $\varphi_x f + \nabla \varphi_x \cdot (p - M_x) - \omega_x$ in $vBMOL^2\big( B_{2 \varepsilon}(x) \big)$.

Note that for any domain $D \subset \mathbf{R}^n$, the $vBMOL^2$-norm is translational invariant in the sense that for any $y \in \mathbf{R}^n$, the estimate
\[
\| g \|_{vBMOL^2(D)} = \| g^{-y} \|_{vBMOL^2(D+y)}
\] 
holds for any $g \in vBMOL^2(D)$ with $g^{-y}(\cdot) := g(\cdot -y)$ and $D+y := \{ x+y \bigm| x \in D\}$.
Therefore, there exists a constant $C_B>0$, which is independent of $x \in \overline{\Omega_{3 \varepsilon}}$, such that
\begin{align*}
&\| \varphi_x f_0 - \omega_x \|_{vBMOL^2\big( B_{2 \varepsilon}(x) \big)} + \| \nabla \big( \varphi_x (p - M_x) \big) \|_{vBMOL^2\big( B_{2 \varepsilon}(x) \big)} \\
&\ \ \leq C_B \| \varphi_x f + \nabla \varphi_x (p - M_x) - \omega_x \|_{vBMOL^2\big( B_{2 \varepsilon}(x) \big)}.
\end{align*}
By the triangle inequality, we obtain estimate (\ref{FILE}).
\end{proof}

In order to estimate the right hand side of estimate (\ref{FILE}), we need the following multiplication rule which enables us to estimate products of H$\ddot{\text{o}}$lder continuous functions and $BMOL^r$ functions defined in a domain.

\begin{proposition} \label{BLRM}
Let $D \subset \mathbf{R}^n$ be a uniform domain and $r \in [1,\infty)$. There exists a constant $C=C(K_\ast)>0$, depending only on the constant $K_\ast$ in (\ref{CCUD}) which characterizes the uniform domain $D$, such that the estimate
\begin{align} \label{MRUD}
\| \varphi_D v \|_{BMOL^r(D)} \leq C(K_\ast) \| \varphi_D \|_{C^\gamma(D)} \| v \|_{BMOL^r(D)}
\end{align}
holds for any $v \in BMOL^r(D)$ and $\varphi_D \in C^\gamma(D)$ where $\gamma \in (0,1)$.
\end{proposition}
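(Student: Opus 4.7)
The plan is to split $\|\varphi_D v\|_{BMOL^r(D)}$ into its $L^r$-norm and $BMO^\infty$-seminorm parts and control each separately. The $L^r$ bound is immediate from the pointwise estimate $|\varphi_D v| \leq \|\varphi_D\|_{L^\infty(D)} |v|$, giving $\|\varphi_D v\|_{L^r(D)} \leq \|\varphi_D\|_{C^\gamma(D)} \|v\|_{BMOL^r(D)}$, so the substantive task is bounding $[\varphi_D v]_{BMO^\infty(D)}$.

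For the seminorm, I would reduce to a multiplier estimate on the whole space. First invoke Proposition \ref{ETBL} to extend $v$ to some $\overline{v} \in BMOL^r(\mathbf{R}^n)$ with $\|\overline{v}\|_{BMOL^r(\mathbf{R}^n)} \leq C(K_\ast)\|v\|_{BMOL^r(D)}$, and extend $\varphi_D$ to some $\widetilde{\varphi} \in C^\gamma(\mathbf{R}^n) \cap L^\infty(\mathbf{R}^n)$ by the McShane construction $\widetilde{\varphi}(x) := \inf_{z \in D}\bigl(\varphi_D(z) + [\varphi_D]_{C^\gamma(D)}|x-z|^\gamma\bigr)$ (truncated at $\pm\|\varphi_D\|_{L^\infty(D)}$ to preserve the pointwise bound), so that $\|\widetilde{\varphi}\|_{C^\gamma(\mathbf{R}^n)} \leq C\|\varphi_D\|_{C^\gamma(D)}$. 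Since every open ball $B \subset D$ is an open ball in $\mathbf{R}^n$ and $(\widetilde{\varphi}\,\overline{v})|_D = \varphi_D v$, one has $[\varphi_D v]_{BMO^\infty(D)} \leq [\widetilde{\varphi}\,\overline{v}]_{BMO^\infty(\mathbf{R}^n)}$, and it suffices to establish the multiplier inequality on $\mathbf{R}^n$.

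To prove the whole-space estimate for a ball $B = B_\rho(x_0) \subset \mathbf{R}^n$, I would use the pointwise identity
\[
\widetilde{\varphi}(y)\overline{v}(y) - \widetilde{\varphi}(x_0)\overline{v}_B = \widetilde{\varphi}(y)\bigl(\overline{v}(y)-\overline{v}_B\bigr) + \bigl(\widetilde{\varphi}(y)-\widetilde{\varphi}(x_0)\bigr)\overline{v}_B
\]
combined with the standard bound $\tfrac{1}{|B|}\int_B |\overline{v}-\overline{v}_B|\,dy \leq [\overline{v}]_{BMO^\infty(\mathbf{R}^n)}$ and with the elementary fact that replacing $(\widetilde{\varphi}\,\overline{v})_B$ by any constant costs at most a factor of $2$, so as to get
\[
\frac{1}{|B|}\int_B \bigl|\widetilde{\varphi}\,\overline{v}-(\widetilde{\varphi}\,\overline{v})_B\bigr|\,dy \leq 2\|\widetilde{\varphi}\|_{L^\infty}[\overline{v}]_{BMO^\infty(\mathbf{R}^n)} + 2|\overline{v}_B|\,\mathrm{osc}_B(\widetilde{\varphi}),
\]
with $\mathrm{osc}_B(\widetilde{\varphi}) \leq \min\!\bigl(2\|\widetilde{\varphi}\|_{L^\infty},\,[\widetilde{\varphi}]_{C^\gamma}(2\rho)^\gamma\bigr)$. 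The only nontrivial piece is to bound $|\overline{v}_B|\,\mathrm{osc}_B(\widetilde{\varphi})$ uniformly in $\rho$. For $\rho \geq 1$, Jensen gives $|\overline{v}_B| \leq |B|^{-1/r}\|\overline{v}\|_{L^r(\mathbf{R}^n)} \leq C_n\|\overline{v}\|_{L^r}$, paired with the $L^\infty$ bound on the oscillation. For $\rho < 1$, I would use the classical logarithmic growth along nested concentric balls,
\[
|\overline{v}_B| \leq |\overline{v}_{B_1(x_0)}| + C_n[\overline{v}]_{BMO^\infty(\mathbf{R}^n)}(1+|\log\rho|) \leq C_n\|\overline{v}\|_{L^r(\mathbf{R}^n)} + C_n[\overline{v}]_{BMO^\infty(\mathbf{R}^n)}(1+|\log\rho|),
\]
paired with the Hölder bound $\mathrm{osc}_B(\widetilde{\varphi}) \leq C[\widetilde{\varphi}]_{C^\gamma}\rho^\gamma$ and the fact that $(1+|\log\rho|)\rho^\gamma$ is bounded on $(0,1]$ by a constant depending only on $\gamma$.

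The main obstacle is precisely the control of $|\overline{v}_B|$ at small scales: it cannot come from the $L^r$-norm alone and genuinely requires logarithmic growth along a nested chain of balls. Reducing first to the whole space via Proposition \ref{ETBL} is what makes this classical $BMO$ fact directly applicable; working inside $D$ would force one to construct $BMO$-stable chains of balls across the geometrically complicated uniform domain, which is possible but considerably more delicate.
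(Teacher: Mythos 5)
Your proposal follows essentially the same route as the paper: extend $v$ to $\mathbf{R}^n$ via Proposition \ref{ETBL}, extend $\varphi_D$ to a H\"older function on $\mathbf{R}^n$, observe that restricting to $D$ only shrinks the seminorm, and apply the whole-space multiplication rule. The difference is that the paper simply cites the whole-space multiplier estimate from an earlier reference and cites a Whitney-type H\"older extension theorem, whereas you prove the whole-space estimate from scratch (via the logarithmic growth of $|\overline{v}_B|$ along nested balls and the two-scale treatment of $\mathrm{osc}_B(\widetilde{\varphi})$) and use the McShane construction for the H\"older extension; this is correct and makes the argument self-contained. One consequence your computation makes visible that the proposition's statement glosses over: the quantity $\sup_{0<\rho\leq 1}(1+|\log\rho|)\rho^\gamma$ grows like $1/\gamma$ as $\gamma\to 0$, so the constant genuinely depends on $\gamma$ (and on $n$) in addition to $K_\ast$; the statement's ``$C=C(K_\ast)$'' should be read as $C=C(K_\ast,n,\gamma)$. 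This is harmless for the paper's purposes since the proposition is only invoked with a fixed H\"older exponent, but it is worth being aware that the $\gamma$-uniformity claimed in the statement cannot be obtained by this method (or, as far as I can see, at all).
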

\begin{proof}
Let $\varphi_D \in C^\gamma(D)$ with $\gamma \in (0,1)$, there exists an linear extension $\overline{\varphi_D} \in C^\gamma(\mathbf{R}^n)$ such that the restriction of $\overline{\varphi_D}$ in $D$ equals $\varphi_D$ and $\| \overline{\varphi_D} \|_{C^\gamma(\mathbf{R}^n)} \leq \| \varphi_D \|_{C^\gamma(D)}$; see e.g. \cite[Theorem 13]{GigaGuPA}.
Let $\overline{v} \in BMOL^r(\mathbf{R}^n)$ be the Jones extension of $v \in BMOL^r(D)$ constructed in Proposition \ref{ETBL}.
Since the multiplication of $\varphi_\ast \in C^\gamma(\mathbf{R}^n)$ to $g \in BMOL^r(\mathbf{R}^n)$ is bounded \cite{GigaGuPA}, by Proposition \ref{ETBL} we deduce that
\begin{align*}
\| \overline{\varphi_D} \cdot \overline{v} \|_{BMOL^r(\mathbf{R}^n)} \leq C \| \overline{\varphi_D} \|_{C^\gamma(\mathbf{R}^n)} \| \overline{v} \|_{BMOL^r(\mathbf{R}^n)} \leq C(K_\ast) \| \varphi_D \|_{C^\gamma(D)} \| v \|_{BMOL^r(D)}.
\end{align*}
\end{proof}

By considering estimate (\ref{FILE}) for every $x \in \overline{\Omega_{3 \varepsilon}}$, we can deduce an interior $BMO^\varepsilon$-estimate for $f_0$ and $\nabla p$ in $\Omega_{2 \varepsilon}$. For $\rho \in (0, R_\ast)$, we let $\partial \Omega_\rho := \{ x \in \Omega \bigm| d_\Gamma(x) = \rho \}$.

\begin{lemma} \label{FPBI}
The estimate
\begin{align} \label{IE}
[f_0]_{BMO^\varepsilon(\Omega_{2 \varepsilon})} + [\nabla p]_{BMO^\varepsilon(\Omega_{2 \varepsilon})} \leq \frac{C}{\varepsilon^2} \| f \|_{vBMOL^2(\Omega)}.
\end{align}
holds with $C=C(\alpha,\beta,K)>0$.
\end{lemma}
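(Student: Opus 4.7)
The plan is to transfer the $vBMOL^2$-bound for the cut-off pieces provided by Lemma \ref{FIL} into the uniform interior $BMO^\varepsilon$-estimate for $f_0$ and $\nabla p$. First, given a ball $B_r(y) \subset \Omega_{2\varepsilon}$ with $r<\varepsilon$, I will select some $x \in \overline{\Omega_{3\varepsilon}}$ with $B_r(y) \subset B_\varepsilon(x)$. When $d_\Gamma(y) \geq 3\varepsilon$ the choice $x = y$ works; otherwise, since $B_r(y) \subset \Omega_{2\varepsilon}$ forces $d_\Gamma(y) \geq 2\varepsilon + r$, I move $y$ along the inward normal (available because $3\varepsilon < R_*$) by $3\varepsilon - d_\Gamma(y) \leq \varepsilon - r$, obtaining $x$ with $d_\Gamma(x) = 3\varepsilon$ and $|x-y| \leq \varepsilon - r$. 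Since $\varphi_x \equiv 1$ on $B_\varepsilon(x) \supset B_r(y)$, the mean oscillation of $f_0$ over $B_r(y)$ coincides with that of $\varphi_x f_0$, and is trivially dominated by $[\varphi_x f_0]_{BMO^\infty(B_{2\varepsilon}(x))}$; the same holds for $\nabla p$. Thus (\ref{IE}) will follow from a bound on $\|\varphi_x f_0\|_{vBMOL^2(B_{2\varepsilon}(x))} + \|\varphi_x \nabla p\|_{vBMOL^2(B_{2\varepsilon}(x))}$ that is uniform in $x \in \overline{\Omega_{3\varepsilon}}$.

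I then apply Lemma \ref{FIL} to reduce this to bounding the three quantities $\|\varphi_x f\|_{vBMOL^2(B_{2\varepsilon}(x))}$, $\|\nabla \varphi_x (p-M_x)\|_{vBMOL^2(B_{2\varepsilon}(x))}$ and $\|\omega_x\|_{vBMOL^2(B_{2\varepsilon}(x))}$. Because each of these is supported in $\overline{B_{3\varepsilon/2}(x)}$, which sits at distance at least $\varepsilon/2$ from $\partial B_{2\varepsilon}(x)$, the boundary $b^\nu$-seminorm in $vBMOL^2(B_{2\varepsilon}(x))$ (with $\nu \leq \varepsilon/2$) vanishes automatically, and only the $BMOL^2$-part needs to be controlled.

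For the three $BMOL^2$-terms I plan the following estimates. The first is handled by the multiplication rule, Proposition \ref{BLRM}, on the ball $B_{2\varepsilon}(x)$ (a uniform domain with universal constant $K_*$): fixing any $\gamma \in (0,1)$ gives $\|\varphi_x f\|_{BMOL^2(B_{2\varepsilon}(x))} \leq C\|\varphi_x\|_{C^\gamma}\|f\|_{BMOL^2(\Omega)} \leq C\varepsilon^{-\gamma}\|f\|_{vBMOL^2(\Omega)}$. The second is attacked via the embedding $W^{1,n}(B_{2\varepsilon}(x)) \hookrightarrow BMO^\infty(B_{2\varepsilon}(x))$ derived at the end of Section \ref{sub:BMP}, combined with the bounds $\|\nabla \varphi_x\|_\infty \leq C/\varepsilon$ and $\|\nabla^2 \varphi_x\|_\infty \leq C/\varepsilon^2$ from Proposition \ref{CFB} and the Poincar\'e inequality $\|p-M_x\|_{L^n(B_{2\varepsilon}(x))} \leq C\varepsilon\|\nabla p\|_{L^n(B_{2\varepsilon}(x))}$; together they yield $\|\nabla \varphi_x(p-M_x)\|_{BMOL^2(B_{2\varepsilon}(x))} \leq (C/\varepsilon)\|\nabla p\|_{L^n(B_{2\varepsilon}(x))}$, after which H\"older on the bounded ball gives $\|\nabla p\|_{L^n(B_{2\varepsilon}(x))} \leq C\varepsilon^{1/2}\|\nabla p\|_{L^{2n}(\Omega)}$. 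The third term uses Morrey's inequality (\ref{MORB}) followed by the Bogovski$\breve{\i}$ estimate (\ref{BGOx}): $\|\omega_x\|_{C^{0,1/2}(B_{2\varepsilon}(x))} \leq (C/\varepsilon)\|\omega_x\|_{W^{1,2n}(B_{2\varepsilon}(x))} \leq (C/\varepsilon)\|\nabla\varphi_x\cdot f_0\|_{L^{2n}(B_{2\varepsilon}(x))} \leq (C/\varepsilon^2)\|f_0\|_{L^{2n}(\Omega)}$, and the $BMOL^2$-norm of $\omega_x$ on this bounded domain is then a constant multiple of its $C^{0,1/2}$-norm. Finally \eqref{HE2N} and Proposition \ref{ITP} bound $\|f_0\|_{L^{2n}(\Omega)} + \|\nabla p\|_{L^{2n}(\Omega)}$ by $C\|f\|_{vBMOL^2(\Omega)}$.

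Collecting the three bounds yields the uniform estimate $\|\varphi_x f_0\|_{vBMOL^2(B_{2\varepsilon}(x))} + \|\varphi_x \nabla p\|_{vBMOL^2(B_{2\varepsilon}(x))} \leq (C/\varepsilon^2)\|f\|_{vBMOL^2(\Omega)}$ with $C = C(\alpha,\beta,K)$, and passing to the supremum over admissible $r,y$ gives (\ref{IE}). The main technical obstacle is bookkeeping the $\varepsilon$-dependence: each of the multiplication, Poincar\'e, Morrey, and Bogovski$\breve{\i}$ steps contributes a factor of the form $\varepsilon^{-a}$, and only their precise combination stays at or below $\varepsilon^{-2}$; the binding constraint is the Bogovski$\breve{\i}$ term, which already supplies the full $\varepsilon^{-2}$.
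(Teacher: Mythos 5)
Your proposal is correct and follows essentially the same route as the paper: reduce to the cut-off estimate of Lemma~\ref{FIL} by finding for each admissible $B_r(y) \subset \Omega_{2\varepsilon}$ a point $x \in \overline{\Omega_{3\varepsilon}}$ (by projecting onto $\partial\Omega_{3\varepsilon}$) with $B_r(y) \subset B_\varepsilon(x)$ so that $\varphi_x \equiv 1$ on $B_r(y)$, observe that the $b^\nu$-seminorms vanish by the support condition, and then control the three right-hand terms of \eqref{FILE} via the multiplication rule, the $W^{1,n} \hookrightarrow BMO$ embedding with Poincar\'e, Morrey's inequality with the Bogovski\u{\i} bound, and finally \eqref{HE2N} with Proposition~\ref{ITP}. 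The only (immaterial) difference is that for $\nabla\varphi_x(p-M_x)$ you expand $\nabla(\nabla\varphi_x(p-M_x))$ by the product rule and invoke the $W^{1,n}\hookrightarrow BMO$ embedding directly, whereas the paper applies the multiplication rule \eqref{MRUD} to $\nabla\varphi_x$ and $p - M_x$ and only then uses the embedding on $p - M_x$; your variant in fact yields a slightly sharper power of $\varepsilon$ for that term, but since the $\omega_x$ term already forces $\varepsilon^{-2}$, both give the stated bound.
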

\begin{proof}
Let $x \in \Omega_{2 \varepsilon} \setminus \overline{\Omega_{3 \varepsilon}}$ and $0<r<\varepsilon$ be such that $B_r(x) \subset \Omega_{2 \varepsilon}$. 
Since $d_\Gamma(x) < 3 \varepsilon < R_\ast$, there exists a unique $z_0 \in \Gamma$ such that $d_\Gamma(x) = |x - z_0|$. Let $x_0$ be the unique point in $\partial \Omega_{3 \varepsilon}$ such that $|x_0 - z_0| = 3 \varepsilon$. Since $B_r(x) \subset \Omega_{2 \varepsilon}$, we must have that $|x - x_0| + r \leq \varepsilon$. Thus, we deduce that $B_r(x) \subset B_\varepsilon(x_0)$. Let $\varphi_{x_0} \in C_{\mathrm{c}}^\infty\big( B_{2 \varepsilon}(x_0) \big)$ be defined as in Proposition \ref{CFB}. Since $\varphi_{x_0} = 1$ in $B_\varepsilon(x_0)$, it holds that
\begin{align*}
\frac{1}{|B_r(x)|} \int_{B_r(x)} \big| f_0 - (f_0)_{B_r(x)} \big| \, dy &\leq [f_0]_{BMO^\infty\big( B_\varepsilon(x_0) \big)} \leq \| \varphi_{x_0} f_0 \|_{vBMOL^2\big( B_{2 \varepsilon}(x_0) \big)}, \\
\frac{1}{|B_r(x)|} \int_{B_r(x)} \big| \nabla p - (\nabla p)_{B_r(x)} \big| \, dy &\leq [\nabla p]_{BMO^\infty\big( B_\varepsilon(x_0) \big)} \leq \| \varphi_{x_0} \nabla p \|_{vBMOL^2\big( B_{2 \varepsilon}(x_0) \big)}.
\end{align*}
In the case that $x \in \overline{\Omega_{3 \varepsilon}}$ and $0<r<\varepsilon$, by considering the cut-off function $\varphi_x \in C_{\mathrm{c}}^\infty\big( B_{2 \varepsilon}(x) \big)$ directly, we naturally have that
\begin{align*}
\frac{1}{|B_r(x)|} \int_{B_r(x)} \big| f_0 - (f_0)_{B_r(x)} \big| \, dy &\leq \| \varphi_{x} f_0 \|_{vBMOL^2\big( B_{2 \varepsilon}(x) \big)}, \\
\frac{1}{|B_r(x)|} \int_{B_r(x)} \big| \nabla p - (\nabla p)_{B_r(x)} \big| \, dy &\leq \| \varphi_{x} \nabla p \|_{vBMOL^2\big( B_{2 \varepsilon}(x) \big)}.
\end{align*}
Hence, in order to estimate the $BMO^\varepsilon$-seminorm of $f_0$ and $\nabla p$ in $\Omega_{2 \varepsilon}$, it is sufficient to estimate the right hand side of estimate (\ref{FILE}).

Let $x \in \overline{\Omega_{3 \varepsilon}}$.
Since $\operatorname{supp} \varphi_x \subseteq \overline{B_{\frac{3 \varepsilon}{2}}(x)}$, we have that
\[
\| \varphi_x f \|_{vBMOL^2\big( B_{2 \varepsilon}(x) \big)} = \| \varphi_x f \|_{BMOL^2\big( B_{2 \varepsilon}(x) \big)}
\]
and
\[
\| \nabla \varphi_x (p - M_x) \|_{vBMOL^2\big( B_{2 \varepsilon}(x) \big)} = \| \nabla \varphi_x (p - M_x) \|_{BMOL^2\big( B_{2 \varepsilon}(x) \big)}.
\]
In the case of a bounded Lipschitz domain $D$, the constant $K_\ast$ in (\ref{CCUD}), which characterizes $D$ as a uniform domain, depends only on the Lipschitz regularity of $\partial D$, see e.g. \cite{GigaGuPA}.
Since the Lipschitz regularity of $\partial B_r(y)$ is a universal constant that is independent of $r \in (0,\infty)$ and $y \in \mathbf{R}^n$, by the multiplication rule (\ref{MRUD}) we can deduce that 
\[
\| \varphi_x f \|_{BMOL^2\big( B_{2 \varepsilon}(x) \big)} \leq C \| \varphi_x \|_{C^1\big( B_{2 \varepsilon}(x) \big)} \| f \|_{BMOL^2\big( B_{2 \varepsilon}(x) \big)} \leq \frac{C}{\varepsilon} \| f \|_{vBMOL^2(\Omega)}
\]
and
\begin{align*}
\| \nabla \varphi_x (p - M_x) \|_{BMOL^2\big( B_{2 \varepsilon}(x) \big)} &\leq C \| \nabla \varphi_x \|_{C^1\big( B_{2 \varepsilon}(x) \big)} \| p - M_x \|_{BMOL^2\big( B_{2 \varepsilon}(x) \big)} \\
&\leq \frac{C}{\varepsilon^2} \| p - M_x \|_{BMOL^2\big( B_{2 \varepsilon}(x) \big)}.
\end{align*}
By estimate (\ref{EWBMO}), H$\ddot{\text{o}}$lder's inequality and Poincar$\acute{\text{e}}$ inequality (\ref{POI}), we deduce that
\[
[p - M_x]_{BMO^\infty\big( B_{2 \varepsilon}(x) \big)} \leq C(n) \| \nabla p \|_{L^n\big( B_{2 \varepsilon}(x) \big)} \leq C(n) \| \nabla p \|_{\widetilde{L}^{2n}(\Omega)}
\]
and
\[
\| p - M_x \|_{L^2\big( B_{2 \varepsilon}(x) \big)} \leq C(n) \varepsilon^{n-1} \| \nabla p \|_{L^n\big( B_{2 \varepsilon}(x) \big)} \leq C(n) \| \nabla p \|_{\widetilde{L}^{2n}(\Omega)}.
\]
Hence, we obtain that
\[
\| \nabla \varphi_x (p - M_x) \|_{BMOL^2\big( B_{2 \varepsilon}(x) \big)} \leq \frac{C(n)}{\varepsilon^2} \| \nabla p \|_{\widetilde{L}^{2n}(\Omega)}.
\]
Finally, by Morrey's inequality (\ref{MORB}) and estimate (\ref{BGOx}), we see that
\[
\| \omega_x \|_{vBMOL^2\big( B_{2 \varepsilon}(x) \big)} \leq C(n) \| \omega_x \|_{L^\infty\big( B_{2 \varepsilon}(x) \big)} \leq \frac{C(n)}{\varepsilon} \| \omega_x \|_{W^{1,2n}\big( B_{2 \varepsilon}(x) \big)} \leq \frac{C(n)}{\varepsilon^2} \| f_0 \|_{L^{2n}(\Omega)}.
\]
By estimate (\ref{HE2N}), we obtain Lemma \ref{FPBI}.
\end{proof}
%%%

%%%
\subsection{$vBMO$ Estimate up to the boundary} \label{sub:EUB}
In this section, we consider the dimension $n \geq 3$.
Let $z_0 \in \Gamma$ be a random point and $\varphi_{z_0} \in C^2(\Omega)$ be the cut-off function defined in Proposition \ref{CFI}.
We treat $\varphi_{z_0}$ as a $C^2$ function defined in $B^\Omega_{12 \varepsilon}(z_0)$ and then consider the local equation
\[
\varphi_{z_0} f = \varphi_{z_0} f_0 + \nabla \big( \varphi_{z_0} \big( p - M_{z_0}) \big) - \big( \nabla \varphi_{z_0} \big) (p-M_{z_0}), \quad M_{z_0} := \frac{1}{|B_{12 \varepsilon}^\Omega(z_0)|} \int_{B_{12 \varepsilon}^\Omega(z_0)} p \, dy.
\]

Since $\operatorname{supp} \varphi_{z_0} \subset \overline{U_{4 \varepsilon}(z_0)} \subset \overline{B^\Omega_{12 \varepsilon}(z_0)}$, by considering Green's formula in $B^\Omega_{12 \varepsilon}(z_0)$, see e.g. \cite[Sec. II.1.2]{HSo}, we have that $\nabla \varphi_{z_0} \cdot f_0 \in L_0^{2n}\big( B^\Omega_{12 \varepsilon}(z_0) \big) := \{ g \in L^{2n}\big( B^\Omega_{12 \varepsilon}(z_0) \big) \bigm| \int_{B^\Omega_{12 \varepsilon}(z_0)} g \, dy = 0 \}$.
Let $B_{2n}^{z_0} : L_0^{2n}\big( B^\Omega_{12 \varepsilon}(z_0) \big) \to W_0^{1,2n}\big( B^\Omega_{12 \varepsilon}(z_0) \big)^n$ be the Bogovski$\breve{\i}$ operator which solves the divergence problem 
\[
\operatorname{div} u = g, \quad u \bigm|_{\partial B^\Omega_{12 \varepsilon}(z_0)} = 0
\]
for $g \in L_0^{2n}\big( B^\Omega_{12 \varepsilon}(z_0) \big)$. 
We then set $\omega_{z_0} := B_{2n}^{z_0}\big( \nabla \varphi_{z_0} \cdot f_0 \big)$. 
By Lemma \ref{BOS}, there exists $x_0 \in B^\Omega_{12 \varepsilon}(z_0)$ such that $B^\Omega_{12 \varepsilon}(z_0)$ is star-like with respect to $B_{3 \varepsilon}(x_0) \subset B^\Omega_{12 \varepsilon}(z_0)$.
Hence, by estimate (\ref{BGOE}) we have that
\begin{align} \label{BGOz}
\| \omega_{z_0} \|_{W^{1,2n}\big( B^\Omega_{12 \varepsilon}(z_0) \big)} \leq C(n) \| \nabla \varphi_{z_0} \cdot f_0 \|_{L^{2n}\big( B^\Omega_{12 \varepsilon}(z_0) \big)}.
\end{align}

Since the boundary of $B^\Omega_{12 \varepsilon}(z_0)$ is locally a continuous function, $p \in \widetilde{G}^{2n}(\Omega)$ ensures that $p \in W^{1,2}\big( B^\Omega_{12 \varepsilon}(z_0) \big) \cap W^{1,2n}\big( B^\Omega_{12 \varepsilon}(z_0) \big)$, see e.g. \cite[\S 2, Th. 7.6]{Nec}.
The $L^q$-interpolation further implies that $p \in W^{1,n}\big( B^\Omega_{12 \varepsilon}(z_0) \big)$. 
Since $W^{1,n}\big( B^\Omega_{12 \varepsilon}(z_0) \big) \hookrightarrow BMO^\infty\big( B^\Omega_{12 \varepsilon}(z_0) \big)$, we have that $p \in BMOL^2\big( B^\Omega_{12 \varepsilon}(z_0) \big)$.

\begin{lemma} \label{FBL}
Let $z_0 \in \Gamma$ and $\mathbf{R}_{h_{z_0}^\ast}^n$ be the perturbed $C^3$ half space defined in Lemma \ref{EBP} such that $B^\Omega_{12 \varepsilon}(z_0) \subset \mathbf{R}_{h_{z_0}^\ast}^n$.
There exists a constant $C = C(\alpha,\beta,K)>0$, independent of $z_0 \in \Gamma$, such that
\begin{equation} \label{FBLE}
\begin{split}
&\| \varphi_{z_0} f_0 \|_{vBMOL^2\big( \mathbf{R}_{h_{z_0}^\ast}^n \big)} + \| \varphi_{z_0} \nabla p \|_{vBMOL^2\big( \mathbf{R}_{h_{z_0}^\ast}^n \big)} \\
&\ \ \leq C \bigg( \| \varphi_{z_0} f \|_{vBMOL^2\big( \mathbf{R}_{h_{z_0}^\ast}^n \big)} + 2 \| \nabla \varphi_{z_0} (p - M_{z_0}) \|_{vBMOL^2\big( \mathbf{R}_{h_{z_0}^\ast}^n \big)} + 2 \| \omega_{z_0}^\ast \|_{vBMOL^2\big( \mathbf{R}_{h_{z_0}^\ast}^n \big)} \bigg)
\end{split}
\end{equation}
where $\omega_{z_0}^\ast$ denotes the zero extension of $\omega_{z_0}$ to $\mathbf{R}_{h_{z_0}^\ast}^n$.
\end{lemma}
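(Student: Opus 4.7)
The plan is to mirror the proof of Lemma \ref{FIL}, replacing the ambient smooth ball $B_{2\varepsilon}(x)$ by the slightly perturbed $C^3$ half space $\mathbf{R}_{h_{z_0}^\ast}^n$ furnished by Lemma \ref{EBP} (applied at $\rho = 12\varepsilon$). Writing $g_{z_0} := \varphi_{z_0} f + \nabla \varphi_{z_0}(p - M_{z_0}) - \omega_{z_0}^\ast$, the goal is to show that
\[
g_{z_0} = \bigl( \varphi_{z_0} f_0 - \omega_{z_0}^\ast \bigr) + \nabla \bigl( \varphi_{z_0}(p - M_{z_0}) \bigr)
\]
is in fact the Helmholtz decomposition of $g_{z_0}$ in $vBMOL^2(\mathbf{R}_{h_{z_0}^\ast}^n)$; estimate (\ref{FBLE}) will then follow from the boundedness of the Helmholtz projection in $vBMOL^2$ of a perturbed $C^3$ half space with small perturbation \cite{GigaGuP}, combined with the triangle inequality.

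The first step is to verify that each summand of $g_{z_0}$, zero-extended from $B^\Omega_{12\varepsilon}(z_0)$, lies in $vBMOL^2(\mathbf{R}_{h_{z_0}^\ast}^n)$. Since $\operatorname{supp}\varphi_{z_0} \subset \overline{U_{4\varepsilon}(z_0)} \subset \overline{B^\Omega_{12\varepsilon}(z_0)}$, Lemma \ref{BOL} turns $B^\Omega_{12\varepsilon}(z_0)$ into a bounded Lipschitz domain with Lipschitz constant controlled by $K$; the Poincar\'e inequality (\ref{POI}), the Bogovski$\breve{\i}$ bound (\ref{BGOz}), Morrey's inequality (\ref{MOR}) and the embedding $W^{1,n} \hookrightarrow BMO^\infty$ from (\ref{EWBMO}), combined with the multiplication rule of Proposition \ref{BLRM}, then deliver the $L^2$ and $BMO^\infty$ estimates required for each term, while the boundary seminorm in the definition of $vBMO$ is controlled through the $L^\infty$-bound that Morrey's inequality supplies for $\omega_{z_0}^\ast$ and for $\nabla\varphi_{z_0}(p - M_{z_0})$, and inherited directly from $f \in vBMOL^2(\Omega)$ for $\varphi_{z_0} f$.

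The key algebraic point is that $\varphi_{z_0} f_0 - \omega_{z_0}^\ast$ belongs to $vBMOL^2_\sigma(\mathbf{R}_{h_{z_0}^\ast}^n)$. Because $f_0$ is divergence free in $\Omega$, one has $\operatorname{div}(\varphi_{z_0} f_0) = \nabla\varphi_{z_0}\cdot f_0 = \operatorname{div}\omega_{z_0}$ on $B^\Omega_{12\varepsilon}(z_0)$ by the defining property of $B_{2n}^{z_0}$. Since $\omega_{z_0} \in W^{1,2n}_0(B^\Omega_{12\varepsilon}(z_0))^n$ vanishes on \emph{all} of $\partial B^\Omega_{12\varepsilon}(z_0)$, its zero extension $\omega_{z_0}^\ast$ lies in $W^{1,2n}(\mathbf{R}_{h_{z_0}^\ast}^n)^n$ with distributional divergence equal to the zero extension of $\nabla\varphi_{z_0}\cdot f_0$; the analogous identity holds for $\varphi_{z_0} f_0$ because $\varphi_{z_0}$ is compactly supported in $\overline{U_{4\varepsilon}(z_0)}$, so $\operatorname{div}(\varphi_{z_0} f_0 - \omega_{z_0}^\ast) = 0$ in $\mathbf{R}_{h_{z_0}^\ast}^n$. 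For the normal trace on $\partial \mathbf{R}_{h_{z_0}^\ast}^n$, the smallness of $\varepsilon$ adopted at the start of Chapter \ref{sec:3} forces $h_{z_0}^\ast = h_{z_0}$ on $B_{12\varepsilon}(0')$ via Lemma \ref{EBP}, so that $\partial \mathbf{R}_{h_{z_0}^\ast}^n$ coincides with $\Gamma$ throughout the tangential projection of $\operatorname{supp}\varphi_{z_0}$; hence $\varphi_{z_0} f_0 \cdot \mathbf{n} = 0$ there, while $\omega_{z_0}^\ast \cdot \mathbf{n} = 0$ on $\partial \mathbf{R}_{h_{z_0}^\ast}^n$ follows from the Dirichlet condition on $\omega_{z_0}$.

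Once $g_{z_0} \in vBMOL^2(\mathbf{R}_{h_{z_0}^\ast}^n)$ is established, the Helmholtz decomposition of \cite{GigaGuP} produces a unique splitting $g_{z_0} = \widetilde{f}_0 + \nabla \widetilde{p}$ with an operator bound whose constant depends only on the reach and the type $\bigl(\lambda(K,12\varepsilon)\bigr)$ of $\mathbf{R}_{h_{z_0}^\ast}^n$, both quantified in Lemma \ref{EBP} in terms of $\alpha,\beta,K$. Proposition \ref{ITP} places $g_{z_0}, \widetilde{f}_0, \nabla\widetilde{p}$ in $\widetilde{L}^{2n}(\mathbf{R}_{h_{z_0}^\ast}^n)$, so the uniqueness of the $\widetilde{L}^{2n}$-Helmholtz decomposition on uniformly $C^1$ domains \cite{FKS,FKSH} --- applicable since $\mathbf{R}_{h_{z_0}^\ast}^n$ is uniformly $C^3$ --- together with the observation that $\bigl(\varphi_{z_0} f_0 - \omega_{z_0}^\ast\bigr) + \nabla\bigl(\varphi_{z_0}(p - M_{z_0})\bigr)$ is itself a valid $\widetilde{L}^{2n}$-decomposition of $g_{z_0}$ identifies $\widetilde{f}_0 = \varphi_{z_0} f_0 - \omega_{z_0}^\ast$ and $\nabla\widetilde{p} = \nabla\bigl(\varphi_{z_0}(p - M_{z_0})\bigr)$. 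The triangle inequality then delivers (\ref{FBLE}). I expect the main obstacle to be the divergence-and-normal-trace identification above: both assertions rest on $\operatorname{supp}\varphi_{z_0}$ lying entirely in the region where $\partial \mathbf{R}_{h_{z_0}^\ast}^n$ agrees with $\Gamma$ and on $\omega_{z_0}$ vanishing on the \emph{whole} of $\partial B^\Omega_{12\varepsilon}(z_0)$ (not only on its $\Gamma$-part), which is what makes the zero extension propagate the divergence identity across $\partial B^\Omega_{12\varepsilon}(z_0) \setminus \Gamma$; both ingredients are guaranteed by the smallness of $\varepsilon$ imposed in Chapter \ref{sec:3} and by the reach estimate of Proposition \ref{DR}.
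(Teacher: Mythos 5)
Your proposal follows the same route as the paper's proof: both localize with $\varphi_{z_0}$ and $\omega_{z_0}^\ast$, recognize $g_{z_0}$ as an element of $vBMOL^2(\mathbf{R}_{h_{z_0}^\ast}^n)$, apply the Helmholtz decomposition of \cite{GigaGuP} on the slightly perturbed half space, and transfer the resulting splitting to $(\varphi_{z_0} f_0 - \omega_{z_0}^\ast) + \nabla(\varphi_{z_0}(p-M_{z_0}))$ by invoking Proposition \ref{ITP} and the uniqueness of the $\widetilde{L}^{2n}$-Helmholtz decomposition. The one substantive difference is that you spell out what the paper leaves tacit, namely the verification that $\varphi_{z_0} f_0 - \omega_{z_0}^\ast$ is solenoidal with vanishing normal trace on $\partial\mathbf{R}_{h_{z_0}^\ast}^n$ (via the Bogovski$\breve{\i}$ divergence identity, the $W^{1,2n}_0$ Dirichlet condition on $\omega_{z_0}$, and the fact that $h_{z_0}^\ast = h_{z_0}$ over the tangential projection of $\operatorname{supp}\varphi_{z_0}$), so that this pair constitutes a legitimate competing $\widetilde{L}^{2n}$-decomposition before uniqueness is applied; the paper simply recalls the characterization of $\widetilde{L}_\sigma^r(\mathbf{R}_w^n)$ and treats the identification as immediate. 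Your version is a careful elaboration of the same argument rather than an alternative.
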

\begin{proof}
By Morrey's inequality (\ref{MOR}), we see that $\omega_{z_0} = B^{z_0}_{2n}\big( \nabla \varphi_{z_0} \cdot f_0 \big) \in L^\infty\big( B^\Omega_{12 \varepsilon}(z_0) \big)^n$. 
Since $\operatorname{supp} \omega_{z_0}^\ast \subset \overline{U_{5 \varepsilon}(z_0)} \subset \overline{B^\Omega_{11 \varepsilon}(z_0)}$, obviously $\omega_{z_0}^\ast \in vBMOL^2\big( \mathbf{R}_{h_{z_0}^\ast}^n \big)$.
Let $0<r<\frac{\varepsilon}{2}$ and $z_\ast \in \Gamma$. If there exists $z \in B_r(z_\ast) \cap \Gamma$ such that $|z-z_0| \geq 12 \varepsilon$, for any other $y \in B_r(z_\ast) \cap \Gamma$ we must have that $|y-z_0| \geq |z-z_0| - |z-y| > 11 \varepsilon$. 
Hence, if there exists $z \in B_r(z_\ast)$ such that $|z-z_0| \geq 12 \varepsilon$, we see that $\varphi_{z_0} = 0$   in $B_r(z_\ast) \cap \mathbf{R}_{h_{z_0}^\ast}^n$.
If there exists $z \in B_r(z_\ast)$ such that $|z-z_0|<11 \varepsilon$, then we have that
\[
\frac{1}{|B_r(z_\ast)|} \int_{B_r(z_\ast) \cap \mathbf{R}_{h_{z_0}^\ast}^n} \big| \nabla d \cdot \big( \varphi_{z_0} f \big) \big| \, dy \leq [\nabla d \cdot f]_{b^{\frac{\varepsilon}{2}}(\Gamma)}
\]
and $\nabla d \cdot \nabla \varphi_{z_0} = 0$ in $B_r(z_\ast)$ by Proposition \ref{CFI}. 
Since a perturbed $C^3$ half space is a uniform domain, by Proposition \ref{BLRM} we see that $\varphi_{z_0} f \in BMOL^2\big( \mathbf{R}_{h_{z_0}^\ast}^n \big)^n$. 
Since $\nabla \varphi_{z_0} (p - M_{z_0}) \in BMO^{\frac{\varepsilon}{2}}\big( \mathbf{R}_{h_{z_0}^\ast}^n \big)^n \cap L^2\big( \mathbf{R}_{h_{z_0}^\ast}^n \big)^n$, we indeed have that $\nabla \varphi_{z_0} (p - M_{z_0}) \in BMOL^2\big( \mathbf{R}_{h_{z_0}^\ast}^n \big)^n$ as the $BMO^\mu$-seminorm where $\mu \geq \frac{\varepsilon}{2}$ can be estimated by the $L^2$-norm. Hence, we can follow the idea in \cite{FKS} and \cite{FKSH} to treat $\varphi_{z_0} f + \nabla \varphi_{z_0} (p - M_{z_0}) - \omega_{z_0}^\ast$ as an element of $vBMOL^2\big( \mathbf{R}_{h_{z_0}^\ast}^n \big)$.

Let us recall that there exists a constant $M_0>0$, such that for any perturbed $C^1$ half space $\mathbf{R}_w^n$ with $\| \nabla' w \|_{L^\infty(\mathbf{R}^n)} \leq M_0$ and $1<r<\infty$, the Helmholtz decomposition of $L^r\big( \mathbf{R}_w^n \big)^n$ holds, see \cite[Lemma 2.1]{FKSH}, \cite[Lemma 3.8 (a)]{SiSo}. Moreover, in this case we have that
\[
L_\sigma^r\big( \mathbf{R}_w^n \big) = \{ v \in L^r\big( \mathbf{R}_w^n \big)^n \bigm| \operatorname{div} v = 0 \;\, \text{in} \;\, \mathbf{R}_w^n, v \cdot \mathbf{n} = 0 \;\, \text{on} \;\, \partial \mathbf{R}_w^n \},
\]
which is a consequence of the fact that the $\| \cdot \|_{L^r(\mathbf{R}_w^n)}$-closure of $\nabla C^\infty_{\mathrm{c}}\big( \overline{\mathbf{R}_w^n} \big)$ equals $G^r\big( \mathbf{R}_w^n \big)$, see \cite[Lemma 3.7]{SiSo}. Hence, as $\varepsilon<\frac{M_0}{12nK}$, for any $r \in (1,\infty)$ we have that $\widetilde{L}^r\big( \mathbf{R}_{h_{z_0}^\ast}^n \big)$ admits the Helmholtz decomposition and
\[
\widetilde{L}_\sigma^r\big( \mathbf{R}_{h_{z_0}^\ast}^n \big) = \{ v \in \widetilde{L}^r\big( \mathbf{R}_{h_{z_0}^\ast}^n \big) \bigm| \operatorname{div} v = 0 \;\, \text{in} \;\, \mathbf{R}_{h_{z_0}^\ast}^n, v \cdot \mathbf{n} = 0 \;\, \text{on} \;\, \partial \mathbf{R}_{h_{z_0}^\ast}^n \}.
\]

Lemma \ref{EBP} shows that $\mathbf{R}_{h_{z_0}^\ast}^n$ is a perturbed $C^3$ half space which has small perturbation. As a result, the Helmholtz decomposition of $vBMOL^2\big( \mathbf{R}_{h_{z_0}^\ast}^n \big)$ holds \cite[Theorem 1]{GigaGuP}.
There exist $g_{0,\ast} \in vBMOL_\sigma^2\big( \mathbf{R}_{h_{z_0}^\ast}^n \big)$ and $\nabla h_\ast \in GvBMOL^2\big( \mathbf{R}_{h_{z_0}^\ast}^n \big)$ such that $\varphi_{z_0} f + \nabla \varphi_{z_0} (p - M_{z_0}) - \omega_{z_0}^\ast = g_{0,\ast} + \nabla h_\ast$ where
\begin{align*}
vBMOL_\sigma^2\big( \mathbf{R}_{h_{z_0}^\ast}^n \big) &= \{ v \in vBMOL^2\big( \mathbf{R}_{h_{z_0}^\ast}^n \big) \bigm| \operatorname{div} v = 0 \;\, \text{in} \;\, \mathbf{R}_{h_{z_0}^\ast}^n, v \cdot \mathbf{n} = 0 \;\, \text{on} \;\, \partial \mathbf{R}_{h_{z_0}^\ast}^n \}, \\
GvBMOL^2\big( \mathbf{R}_{h_{z_0}^\ast}^n \big) &= \bigg\{ \nabla h \in vBMOL^2\big( \mathbf{R}_{h_{z_0}^\ast}^n \big) \bigm| h \in \underset{r \in [1,\infty)}{\bigcap} L^r_{loc}\big( \mathbf{R}_{h_{z_0}^\ast}^n \big) \bigg\}.
\end{align*}
By Proposition \ref{ITP}, we see that $g_{0,\ast} \in \widetilde{L}_\sigma^{2n}\big( \mathbf{R}_{h_{z_0}^\ast}^n \big)$ and $\nabla h_\ast \in \widetilde{G}^{2n}\big( \mathbf{R}_{h_{z_0}^\ast}^n \big)$. Then, by the uniqueness of the Helmholtz decomposition of $\widetilde{L}^{2n}\big( \mathbf{R}_{h_{z_0}^\ast}^n \big)$, we conclude that the equality
\[
\varphi_{z_0} f + \nabla \varphi_{z_0} (p - M_{z_0}) - \omega_{z_0}^\ast = \big( \varphi_{z_0} f_0 - \omega_{z_0}^\ast \big) + \nabla \big( \varphi_{z_0} (p - M_{z_0}) \big)
\]
is indeed the Helmholtz decomposition of $\varphi_{z_0} f + \nabla \varphi_{z_0} \cdot (p - M_{z_0}) - \omega_{z_0}^\ast$ in $vBMOL^2\big( \mathbf{R}_{h_{z_0}^\ast}^n \big)$. Together with Lemma \ref{EBP}, the estimate of the Helmholtz decomposition says that
\begin{align*}
&\| \varphi_{z_0} f_0 - \omega_{z_0}^\ast \|_{vBMOL^2\big( \mathbf{R}_{h_{z_0}^\ast}^n \big)} + \| \nabla \big( \varphi_{z_0} (p - M_{z_0}) \big) \|_{vBMOL^2\big( \mathbf{R}_{h_{z_0}^\ast}^n \big)} \\
&\ \ \leq C(\alpha, \beta, K) \| \varphi_{z_0} f + \nabla \varphi_{z_0} (p - M_{z_0}) - \omega_{z_0}^\ast \|_{vBMOL^2\big( \mathbf{R}_{h_{z_0}^\ast}^n \big)}.
\end{align*}
We finally obtain Lemma \ref{FBL} by a simple triangle inequality.
\end{proof}

By considering estimate (\ref{FBLE}) for every $z_0 \in \Gamma$, we can deduce an estimate which controls the $BMO^{\frac{\varepsilon}{2}}$-estimate for $f_0$ and $\nabla p$ in $\Gamma_{3 \varepsilon}^{\mathbf{R}^n}$ together with the $b^\varepsilon$-seminorm for $\nabla d \cdot f_0$ and $\nabla d \cdot \nabla p$.

\begin{lemma} \label{FPBB}
The estimate
\begin{align} \label{BE}
[f_0]_{BMO^{\frac{\varepsilon}{2}}\big( \Gamma_{3 \varepsilon}^{\mathbf{R}^n} \big)} + [\nabla d \cdot f_0]_{b^\varepsilon(\Gamma)} + [\nabla p]_{BMO^{\frac{\varepsilon}{2}}\big( \Gamma_{3 \varepsilon}^{\mathbf{R}^n} \big)} + [\nabla d \cdot \nabla p]_{b^\varepsilon(\Gamma)} \leq \frac{C}{\varepsilon^2} \| f \|_{vBMOL^2(\Omega)}.
\end{align}
holds with $C=C(\alpha,\beta,K)>0$.
\end{lemma}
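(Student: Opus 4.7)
The plan is to mirror the argument of Lemma \ref{FPBI}, but now localize via the boundary cut-offs $\varphi_{z_0}$ from Proposition \ref{CFI} and the Helmholtz decomposition in the perturbed half space $\mathbf{R}^n_{h^\ast_{z_0}}$ supplied by Lemma \ref{FBL}. For any $x\in\Omega$ with $d_\Gamma(x)<3\varepsilon$ and any $r<\varepsilon/2$ such that $B_r(x)\cap\Omega\subseteq\Omega$, let $z_0=\pi x\in\Gamma$ be the unique projection. By the choice $\varepsilon<(1152nK+48)^{-1}$, geometric considerations analogous to those in Lemma \ref{BOL} ensure that $B_r(x)\cap\Omega\subseteq U_{3\varepsilon}(z_0)$, so that $\varphi_{z_0}\equiv 1$ on $B_r(x)\cap\Omega$. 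Hence
\[
\frac{1}{|B_r(x)|}\int_{B_r(x)\cap\Omega}\bigl|f_0-(f_0)_{B_r(x)\cap\Omega}\bigr|\,dy\le\|\varphi_{z_0}f_0\|_{vBMOL^2(\mathbf{R}^n_{h^\ast_{z_0}})},
\]
and analogously for $\nabla p$. For the normal part, if $z_\ast\in\Gamma$ and $0<r<\varepsilon$ with $B_r(z_\ast)\cap\Gamma\cap U_{3\varepsilon}(z_0)\neq\emptyset$, then again $\varphi_{z_0}=1$ on $B_r(z_\ast)\cap\Omega$, so the $b^\varepsilon$-contribution of $\nabla d\cdot f_0$ and $\nabla d\cdot\nabla p$ on that ball is bounded by the corresponding $b^{\varepsilon/2}$-seminorm of $\varphi_{z_0}f_0$ and $\varphi_{z_0}\nabla p$ inside $vBMOL^2(\mathbf{R}^n_{h^\ast_{z_0}})$. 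Taking the supremum over $x,z_\ast,r$ reduces everything to estimating the left-hand side of (\ref{FBLE}) uniformly in $z_0\in\Gamma$.

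I then apply Lemma \ref{FBL} and estimate the three summands on its right-hand side in turn. First, by Proposition \ref{BLRM} applied in the uniform domain $\mathbf{R}^n_{h^\ast_{z_0}}$ together with $\|\varphi_{z_0}\|_{C^1}\le C\varepsilon^{-1}$ and the identity $\nabla d\cdot\nabla\varphi_{z_0}=0$ on $\Gamma_{3\varepsilon}^{\mathbf{R}^n}$, one gets
\[
\|\varphi_{z_0}f\|_{vBMOL^2(\mathbf{R}^n_{h^\ast_{z_0}})}\le \frac{C(\alpha,\beta,K)}{\varepsilon}\|f\|_{vBMOL^2(\Omega)},
\]
where the key observation for the $b^\nu$-part is that for any $z_\ast\in\partial\mathbf{R}^n_{h^\ast_{z_0}}$ the intersection of the support of $\varphi_{z_0}$ with $B_r(z_\ast)$ lies either entirely outside $\operatorname{supp}\varphi_{z_0}$ or entirely inside $\Omega$, so $\nabla d\cdot(\varphi_{z_0}f)=\varphi_{z_0}(\nabla d\cdot f)$ on that set. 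Second, using the continuous embedding $W^{1,n}\hookrightarrow BMO$ (estimate (\ref{EWBMO})), Hölder's inequality and the Poincaré inequality (\ref{POI}) on the bounded Lipschitz domain $B^\Omega_{12\varepsilon}(z_0)$ (Lemma \ref{BOL}), I estimate
\[
\|\nabla\varphi_{z_0}(p-M_{z_0})\|_{vBMOL^2(\mathbf{R}^n_{h^\ast_{z_0}})}\le \frac{C(\alpha,\beta,K)}{\varepsilon^2}\|\nabla p\|_{\widetilde{L}^{2n}(\Omega)}.
\]
Third, since $B^\Omega_{12\varepsilon}(z_0)$ is star-like with respect to a ball of radius $3\varepsilon$ by Lemma \ref{BOS}, estimate (\ref{BGOE}) gives (\ref{BGOz}) with a constant independent of $z_0$, and Morrey's inequality (\ref{MOR}) combined with $\operatorname{supp}\omega^\ast_{z_0}\subseteq\overline{B^\Omega_{11\varepsilon}(z_0)}$ yields
\[
\|\omega^\ast_{z_0}\|_{vBMOL^2(\mathbf{R}^n_{h^\ast_{z_0}})}\le C(n)\|\omega_{z_0}\|_{L^\infty(B^\Omega_{12\varepsilon}(z_0))}\le \frac{C(L,\alpha,\beta,K)}{\varepsilon^2}\|f_0\|_{L^{2n}(\Omega)}.
\]
Summing and invoking the $\widetilde{L}^{2n}$ Helmholtz estimate (\ref{HE2N}) produces the claimed bound.

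The main obstacle is the control of the boundary seminorm $[\nabla d\cdot f_0]_{b^\varepsilon(\Gamma)}$. A naive multiplication by $\varphi_{z_0}$ would produce an extra term $(\nabla d\cdot\nabla\varphi_{z_0})(p-M_{z_0})$, which has no a priori control in $b^\varepsilon(\Gamma)$ since $p$ itself is not bounded up to $\Gamma$. This is exactly the reason why $\varphi_{z_0}$ was constructed in Proposition \ref{CFI} so as to satisfy $\nabla d\cdot\nabla\varphi_{z_0}=0$ in $\Gamma_{3\varepsilon}^{\mathbf{R}^n}$; the identity kills the offending term and lets the normal component of $\varphi_{z_0}f_0-\omega^\ast_{z_0}$ inherit its $b^\varepsilon$ control directly from that of $\varphi_{z_0}f$. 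A secondary technical point is to verify that the constants in Proposition \ref{BLRM}, (\ref{BGOE}) and (\ref{POI}) for $B^\Omega_{12\varepsilon}(z_0)$ and $\mathbf{R}^n_{h^\ast_{z_0}}$ depend only on $(\alpha,\beta,K)$; this is guaranteed by Lemmas \ref{BOL}, \ref{BOS} and \ref{EBP}, which make both the Lipschitz constant and the smallness of perturbation uniform in $z_0\in\Gamma$.
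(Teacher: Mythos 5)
Your proposal follows the paper's route precisely: localize near $\Gamma$ with the cut-offs $\varphi_{z_0}$ from Proposition \ref{CFI}, invoke the Helmholtz decomposition of $vBMOL^2(\mathbf{R}^n_{h^\ast_{z_0}})$ via Lemma \ref{FBL}, exploit $\nabla d\cdot\nabla\varphi_{z_0}=0$ to kill the pressure term in the $b^\varepsilon$-seminorm, and then estimate the three right-hand terms of (\ref{FBLE}) by the multiplication rule, Poincar\'e, Morrey/Bogovski\u{\i}, and (\ref{HE2N}). One correction of attribution: the containment $B_r(x)\subset U_{3\varepsilon}(z_0)$ for $r<\varepsilon/2$ is not a consequence of Lemma \ref{BOL}; the paper obtains it by first showing $B_r(x)\subset U_{7\varepsilon}(z_0)$, then applying the normal-coordinate-change estimate (\ref{UCG}) (so $\|\nabla F_0^{-1}\|_{L^\infty(U_{7\varepsilon}(z_0))}<2$, hence the projected tangential coordinate moves by less than $2r<\varepsilon$), which is where the restriction $\varepsilon<c_{1/2}^\Omega/7$ enters.
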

\begin{proof}
Let $x \in \Gamma_{3 \varepsilon}^{\mathbf{R}^n} \cap \Omega$ and $r \in (0,\frac{\varepsilon}{2})$ be such that $B_r(x) \subset \Gamma_{3 \varepsilon}^{\mathbf{R}^n} \cap \Omega$. Since $\varepsilon<\frac{R_\ast}{3}$, there exists a unique $z_0 \in \Gamma$ such that $|x - z_0| = d(x)$. 
For $y \in B_r(x)$, we let $y_0 \in \Gamma$ be the unique projection of $y$ on $\Gamma$.
Since $|y_0 - z_0| \leq d(y) + d(x) + r < 7 \varepsilon$, we have that $x,y \in U_{7 \varepsilon}(z_0)$. 
By estimate (\ref{UCG}), we have that $\| \nabla F_0^{-1} \|_{L^\infty\big( U_{7 \varepsilon}(z_0) \big)} < 2$ where $F_0^{-1}: U_{7 \varepsilon}(z_0) \to V_{7 \varepsilon}$ denotes the normal coordinate change in $U_{7 \varepsilon}(z_0)$. Thus, by mean value theorem we deduce that
\[
\big| y_0' - z_0' \big| \leq \big| F_0^{-1}(y) - F_0^{-1}(x) \big| \leq 2r < \varepsilon,
\]
i.e., we show that $B_r(x) \subset U_{3 \varepsilon}(z_0)$.
By Proposition \ref{CFI}, we have that $\varphi_{z_0} = 1$ in $U_{3 \varepsilon}(z_0)$.
Hence, it holds that
\begin{align*}
\frac{1}{|B_r(x)|} \int_{B_r(x)} \big| f_0 - (f_0)_{B_r(x)} \big| \, dy &\leq [f_0]_{BMO^\infty\big( U_{3 \varepsilon}(z_0) \big)} \leq \| \varphi_{z_0} f_0 \|_{vBMOL^2\big( \mathbf{R}_{h_{z_0}^\ast}^n \big)}, \\
\frac{1}{|B_r(x)|} \int_{B_r(x)} \big| \nabla p - (\nabla p)_{B_r(x)} \big| \, dy &\leq [\nabla p]_{BMO^\infty\big( U_{3 \varepsilon}(z_0) \big)} \leq \| \varphi_{z_0} \nabla p \|_{vBMOL^2\big( \mathbf{R}_{h_{z_0}^\ast}^n \big)}.
\end{align*}
In addition, let $z_0 \in \Gamma$ and $r \in (0,\varepsilon)$. For $y \in B_r(z_0) \cap \Omega$, we still let $y_0$ be the projection of $y$ on $\Gamma$. Since $|y_0 - z_0|<2 \varepsilon$, trivially $y \in U_{3 \varepsilon}(z_0)$. Hence, it holds that
\begin{align*}
r^{-n} \int_{B^\Omega_r(z_0)} \big| \nabla d \cdot f_0 \big| \, dy &= r^{-n} \int_{B^\Omega_r(z_0)} \big| \nabla d \cdot \varphi_{z_0} f_0 \big| \, dy \leq [ \varphi_{z_0} f_0 ]_{b^\varepsilon \big( \partial \mathbf{R}_{h_{z_0}^\ast}^n \big)}, \\
r^{-n} \int_{B^\Omega_r(z_0)} \big| \nabla d \cdot \nabla p \big| \, dy &= r^{-n} \int_{B^\Omega_r(z_0)} \big| \nabla d \cdot \varphi_{z_0} \nabla p \big| \, dy \leq [ \varphi_{z_0} \nabla p ]_{b^\varepsilon \big( \partial \mathbf{R}_{h_{z_0}^\ast}^n \big)}.
\end{align*}
Therefore, in order to obtain Lemma \ref{FPBB}, it is sufficient to estimate the right hand side of estimate (\ref{FBLE}).

Let $z_0 \in \Gamma$.
Since $\varepsilon< \frac{1}{8nK}$, we have that $\operatorname{supp} \varphi_{z_0} \subset \overline{U_{4 \varepsilon}(z_0)} \subset B_{10 \varepsilon}(z_0)$. Thus, if $z \in \partial \mathbf{R}_{h_{z_0}^\ast}^n$ is such that $|z - z_0| \geq 11 \varepsilon$, then for any $y \in B_\varepsilon(z)$ we have that $|y - z_0| > 10 \varepsilon$, i.e., in this case we have that $\varphi_{z_0} = 0$ in $B_\varepsilon(z)$. If $z \in \partial \mathbf{R}_{h_{z_0}^\ast}^n$ is such that $|z - z_0| < 11 \varepsilon$, then $B_\varepsilon(z) \subset B_{12 \varepsilon}(z_0)$. 
In other words, we have that
\[
[\nabla d_{\partial \mathbf{R}_{h_{z_0}^\ast}^n} \cdot \varphi_{z_0} f]_{b^\varepsilon\big( \partial \mathbf{R}_{h_{z_0}^\ast}^n \big)} \leq [\nabla d_\Gamma \cdot \varphi_{z_0} f]_{b^\varepsilon\big( \Gamma \cap B_{12 \varepsilon}(z_0) \big)} \leq [\nabla d_\Gamma \cdot f]_{b^\varepsilon(\Gamma)}, 
\]
where $d_{\partial \mathbf{R}_{h_{z_0}^\ast}^n}$ denotes the distance function for $\partial \mathbf{R}_{h_{z_0}^\ast}^n$ in $\mathbf{R}_{h_{z_0}^\ast}^n$.
Moreover, by Proposition \ref{CFI} we also have that
\[
[\nabla d_{\partial \mathbf{R}_{h_{z_0}^\ast}^n} \cdot \nabla \varphi_{z_0} (p - M_{z_0})]_{b^\varepsilon\big( \partial \mathbf{R}_{h_{z_0}^\ast}^n \big)} \leq [\nabla d_\Gamma \cdot \nabla \varphi_{z_0} (p - M_{z_0})]_{b^\varepsilon\big( \Gamma \cap B_{12 \varepsilon}(z_0) \big)} = 0.
\]

Note that for $x \in \mathbf{R}_{h_{z_0}^\ast}^n$ and $r \in (0, \frac{\varepsilon}{2})$ which satisfies $B_r(x) \cap B^\Omega_{11 \varepsilon}(z_0) \neq \emptyset$ and $B_r(x) \subset \mathbf{R}_{h_{z_0}^\ast}^n$, we must have that $B_r(x) \subset B^\Omega_{12 \varepsilon}(z_0)$. Hence, by Proposition \ref{CFI}, Proposition \ref{BLRM} and Lemma \ref{BOL}, we deduce that
\begin{align*}
&\| \varphi_{z_0} f \|_{BMOL^2\big( \mathbf{R}_{h_{z_0}^\ast}^n \big)} = \| \varphi_{z_0} f \|_{BMO^{\frac{\varepsilon}{2}}\big( \mathbf{R}_{h_{z_0}^\ast}^n \big) \cap L^2\big( \mathbf{R}_{h_{z_0}^\ast}^n \big)} = \| \varphi_{z_0} f \|_{BMOL^2\big( B^\Omega_{12 \varepsilon}(z_0) \big)} \\
&\ \ \leq C(\alpha, \beta, K) \| \varphi_{z_0} \|_{C^1\big( B^\Omega_{12 \varepsilon}(z_0) \big)} \| f \|_{BMOL^2\big( B^\Omega_{12 \varepsilon}(z_0) \big)} \leq \frac{C(\alpha, \beta, K)}{\varepsilon} \| f \|_{vBMOL^2(\Omega)}
\end{align*}
and
\begin{align*}
\| \nabla \varphi_{z_0} (p - M_{z_0}) \|_{BMOL^2\big( \mathbf{R}_{h_{z_0}^\ast}^n \big)} &= \| \nabla \varphi_{z_0} (p - M_{z_0}) \|_{BMO^{\frac{\varepsilon}{2}}\big( \mathbf{R}_{h_{z_0}^\ast}^n \big) \cap L^2\big( \mathbf{R}_{h_{z_0}^\ast}^n \big)} \\
&= \| \nabla \varphi_{z_0} (p - M_{z_0}) \|_{BMOL^2\big( B^\Omega_{12 \varepsilon}(z_0) \big)} \\
&\leq C(\alpha, \beta, K) \| \nabla \varphi_{z_0} \|_{C^1\big( B^\Omega_{12 \varepsilon}(z_0) \big)} \| p - M_{z_0} \|_{BMOL^2\big( B^\Omega_{12 \varepsilon}(z_0) \big)} \\
&\leq \frac{C(\alpha, \beta, K)}{\varepsilon^2} \| p - M_{z_0} \|_{BMOL^2\big( B^\Omega_{12 \varepsilon}(z_0) \big)}.
\end{align*}
By estimate (\ref{EWBMO}), H$\ddot{\text{o}}$lder's inequality and Poincar$\acute{\text{e}}$ inequality (\ref{POI}), we deduce that
\[
[p - M_{z_0}]_{BMO^\infty\big( B^\Omega_{12 \varepsilon}(z_0) \big)} \leq C(n) \| \nabla p \|_{L^n\big( B^\Omega_{12 \varepsilon}(z_0) \big)} \leq C(n) \| \nabla p \|_{\widetilde{L}^{2n}(\Omega)}
\]
and
\[
\| p - M_{z_0} \|_{L^2\big( B^\Omega_{12 \varepsilon}(z_0) \big)} \leq C(\alpha, \beta, K) \varepsilon^{n-1} \| \nabla p \|_{L^n\big( B^\Omega_{12 \varepsilon}(z_0) \big)} \leq C(\alpha, \beta, K) \| \nabla p \|_{\widetilde{L}^{2n}(\Omega)}.
\]
Hence, we obtain that
\[
\| \nabla \varphi_{z_0} (p - M_{z_0}) \|_{BMOL^2\big( \mathbf{R}_{h_{z_0}^\ast}^n \big)} \leq \frac{C(\alpha, \beta, K)}{\varepsilon^2} \| \nabla p \|_{\widetilde{L}^{2n}(\Omega)}.
\]
Finally, by Morrey's inequality (\ref{MOR}) and estimate (\ref{BGOz}), we see that
\begin{align*}
\| \omega_{z_0}^\ast \|_{vBMOL^2\big( \mathbf{R}_{h_{z_0}^\ast}^n \big)} &\leq C(n) \| \omega_{z_0} \|_{L^\infty\big( B^\Omega_{12 \varepsilon}(z_0) \big)} \leq \frac{C(\alpha, \beta, K)}{\varepsilon} \| \omega_{z_0} \|_{W^{1,2n}\big( B^\Omega_{12 \varepsilon}(z_0) \big)} \\
&\leq \frac{C(\alpha, \beta, K)}{\varepsilon^2} \| f_0 \|_{L^{2n}(\Omega)}.
\end{align*}
By estimate (\ref{HE2N}), we obtain Lemma \ref{FPBB}.
\end{proof}

Combine Lemma \ref{FPBI} and Lemma \ref{FPBB}, we obtain Theorem \ref{MT}.
%%%

%%%
\subsection{Characterizations of the solenoidal space and the gradient space} \label{sub:CSoGr}
Let $D \subset \mathbf{R}^n$ be either a uniformly $C^2$ domain or a uniform domain.
For $1<r<\infty$, by H$\ddot{\text{o}}$lder's inequality we can easily see that $p \in L_{loc}^r(D)$ implies that $p \in L_{loc}^1(D)$. 
Reversely, knowing that $p \in L_{loc}^1(D)$ does not necessarily mean that $p \in L_{loc}^r(D)$.
However, if we further know that $\nabla p \in BMOL^2(D)^n$, then the story is different.

\begin{lemma} \label{CGBL}
Let $D \subset \mathbf{R}^n$ be either a uniformly $C^2$ domain or a uniform domain.
Then for any $p \in L_{loc}^1(D)$ such that $\nabla p \in BMOL^2(D)^n$, we have that $p \in L_{loc}^r(D)$ for all $1<r<\infty$.
\end{lemma}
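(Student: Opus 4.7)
The plan is to first boost the integrability of $\nabla p$ itself from $BMOL^2(D)$ to $L^q(D)$ for arbitrarily large finite $q$ by appealing to Proposition \ref{ITP}, and then to transfer this integrability back to $p$ through a local Poincar\'e inequality on balls compactly contained in $D$. The hypothesis $p \in L^1_{\mathrm{loc}}(D)$ will supply the missing constant of integration.

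First I would note that each component of $\nabla p$ belongs to $BMOL^2(D)$, so applying Proposition \ref{ITP} componentwise with $r=2$ immediately yields
\[
\|\nabla p\|_{L^q(D)} \leq C\, q\, \|\nabla p\|_{BMOL^2(D)}
\]
for every $q \in [2, \infty)$. This uses nothing beyond the hypothesis that $D$ is either a uniformly $C^2$ domain or a uniform domain, which are precisely the assumptions of Proposition \ref{ITP}.

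Next I would localize. Fix any open ball $B := B_\rho(x_0)$ with $\overline{B} \subset D$. Since $p \in L^1_{\mathrm{loc}}(D)$ and $B$ is bounded, the average $p_B := |B|^{-1} \int_B p \, dy$ is a finite real number, while the previous step guarantees $\nabla p \in L^r(B)^n$ for every $r \in [1, \infty)$. The classical Poincar\'e inequality on balls, recalled in \cite[\S 5.8, Th.\ 2]{Evans} just above estimate (\ref{EWBMO}) of this paper, then gives
\[
\|p - p_B\|_{L^r(B)} \leq C(n, r)\, \rho\, \|\nabla p\|_{L^r(B)} < \infty
\]
for every $r \in [1, \infty)$. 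Writing $p = (p - p_B) + p_B$ and noting that the constant $p_B$ lies in $L^r(B)$ gives $p \in L^r(B)$. Since any compact subset of $D$ can be covered by finitely many such balls, this proves $p \in L^r_{\mathrm{loc}}(D)$ for every $r \in (1, \infty)$.

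I do not anticipate a genuine obstacle here: the entire argument is a two-line combination of Proposition \ref{ITP} with the ball Poincar\'e inequality. The only point deserving a brief sanity check is that, after passing to a ball $B \subset D$, one need not reapply the interpolation inequality on $B$ (whose constants would depend on $B$); instead one applies Proposition \ref{ITP} once on $D$ to gain $L^r$ information on $\nabla p$ everywhere, and only then localizes to $B$ for the Poincar\'e step.
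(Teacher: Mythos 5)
Your proof is correct and takes a genuinely simpler route than the paper's. Both proofs begin by invoking Proposition~\ref{ITP} to boost the integrability of $\nabla p$, but the paper fixes the exponent $n$, covers (somewhat more than) a compact set by interior balls together with boundary patches $B^\Omega_\varepsilon(z_j)$, applies the embedding $W^{1,n}\hookrightarrow BMO^\infty$ of (\ref{EWBMO}) to put $p$ in $BMOL^1$ on each patch, and then invokes Proposition~\ref{ITP} a second time, locally, to land $p$ in $L^r$. You instead apply Proposition~\ref{ITP} once at the target exponent to get $\nabla p\in L^r(D)^n$ directly, and then use the $L^r$ Poincar\'e inequality on interior balls; this avoids the detour through $BMO$ and the second interpolation, and since $L^r_{\mathrm{loc}}(D)$ concerns only compact subsets of the open set $D$, covering by genuine balls inside $D$ suffices without any Lipschitz or uniform-domain machinery near $\partial D$. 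One step deserves a word, and it is common to both proofs: the Poincar\'e inequality as recalled in the paper (following \cite{Evans}) is stated for $g\in W^{1,r}(B)$, whereas your $p$ is a priori only in $L^1(B)$ with distributional gradient in $L^r(B)^n$. The conclusion $p-p_B\in L^r(B)$ does still hold in this weaker setting --- mollify on slightly shrunken balls, apply Poincar\'e to the mollifications, and pass to the limit using Cauchyness in $L^r$ together with convergence in $L^1$ --- and the paper's own application of (\ref{EWBMO}) to $p$ relies on precisely the same background fact; still, it is worth spelling out rather than citing the $W^{1,r}$ version of Poincar\'e directly.
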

\begin{proof}
Let $Q \subset \subset D$.
We pick an arbitrary open ball $B_\ast$ such that $Q \subset B_\ast$.
Let $B^\Omega_\ast := B_\ast \cap \Omega$ and $\mathcal{I}^\ast_\varepsilon$ to be the set of all $x \in \overline{B^\Omega_\ast}$ such that $d_\Gamma(x) \geq \varepsilon$.
Note that the union
\[
\left( \underset{x \in \mathcal{I}^\ast_\varepsilon}{\bigcup} \; B_\varepsilon(x) \right) \bigcup \left( \underset{z \in \Gamma}{\bigcup} \; B_\varepsilon(z) \right)
\]
gives us an open cover for $\overline{B^\Omega_\ast}$ as
\[
\Gamma_\varepsilon^{\mathbf{R}^n} = \underset{z \in \Gamma}{\bigcup} \; B_\varepsilon(z).
\]
Hence, there exist $\{ x_i \in \mathcal{I}^\ast_\varepsilon \bigm| 1 \leq i \leq N \}$ and $\{ z_j \in \Gamma \bigm| 1 \leq j \leq M \}$ such that
\[
\overline{B^\Omega_\ast} \subset \left( \bigcup_{i=1}^N \; B_\varepsilon(x_i) \right) \bigcup \left( \bigcup_{j=1}^M \; B_\varepsilon(z_j) \right).
\]
By estimate (\ref{EWBMO}), we see that
\[
[p]_{BMO^\infty\big( B_\varepsilon(x_i) \big)} \leq C(n) \| \nabla p \|_{L^n\big( B_\varepsilon(x_i) \big)}, \quad [p]_{BMO^\infty\big( B^\Omega_\varepsilon(z_j) \big)} \leq C(n) \| \nabla p \|_{L^n\big( B^\Omega_\varepsilon(z_j) \big)}.
\] 
By Proposition \ref{ITP}, we note that $\| \nabla p \|_{L^n(D)}$ can be controlled by $\| \nabla p \|_{BMOL^2(D)}$.
Hence, we have that $p \in BMOL^1\big( B_\varepsilon(x_i) \big) \cap BMOL^1\big( B^\Omega_\varepsilon(z_j) \big)$ for any $i,j$. Using Proposition \ref{ITP} again, we deduce that 
\[
\| p \|_{L^r\big( B_\varepsilon(x_i) \big)} \leq C \| p \|_{BMOL^1\big( B_\varepsilon(x_i) \big)}, \quad \| p \|_{L^r\big( B^\Omega_\varepsilon(z_j) \big)} \leq C \| p \|_{BMOL^1\big( B^\Omega_\varepsilon(z_j) \big)}.
\]
Summing up all $i$ and $j$, we see that
\[
\| p \|_{L^r(Q)} \leq \| p \|_{L^r\big( B^\Omega_\ast \big)} \leq C \left( \sum_{i=1}^N \| p \|_{BMOL^1\big( B_\varepsilon(x_i) \big)} + \sum_{j=1}^M \| p \|_{BMOL^1\big( B^\Omega_\varepsilon(z_j) \big)} \right)<\infty.
\]
\end{proof}

By Lemma \ref{CGBL}, we see that
\begin{align*}
vBMO^{\infty,\infty}(\Omega) \cap \widetilde{G}^{2n}(\Omega) &= \{ \nabla p \in vBMOL^2(\Omega) \bigm| p \in L_{loc}^{2n}(\Omega) \} \\
&= \{ \nabla p \in vBMOL^2(\Omega) \bigm| p \in L_{loc}^1(\Omega) \}.
\end{align*}

In order to obtain a characterization of the solenoidal space $vBMO^{\infty,\infty}(\Omega) \cap \widetilde{L}^{2n}_\sigma(\Omega)$, we observe the following simple fact. 
\begin{proposition} \label{Chrac:LTq}
Let $D \subset \mathbf{R}^n$ be a uniformly $C^1$ domain. For $2 \leq q < \infty$, it holds that
\[
L_\sigma^2(D) \cap L^q(D)^n = \widetilde{L}_\sigma^q(D).
\]
\end{proposition}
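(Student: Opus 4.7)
The inclusion $\widetilde{L}_\sigma^q(D) \subseteq L_\sigma^2(D) \cap L^q(D)^n$ is immediate from the definition $\widetilde{L}_\sigma^q(D) = L_\sigma^q(D) \cap L_\sigma^2(D)$, since $L_\sigma^q(D) \subset L^q(D)^n$. Thus the substantive content is the reverse inclusion, and the plan is to deduce it from the Helmholtz decomposition of $\widetilde{L}^q(D)$ already cited in the paper (FKS, FKSH).

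First I would note that for $2 \leq q < \infty$ any $f \in L_\sigma^2(D) \cap L^q(D)^n$ lies in $\widetilde{L}^q(D) = L^q(D)^n \cap L^2(D)^n$, because $L_\sigma^2(D) \subset L^2(D)^n$. Applying the Helmholtz decomposition of $\widetilde{L}^q(D)$ for a uniformly $C^1$ domain, write
\[
f = f_0 + \nabla p, \qquad f_0 \in \widetilde{L}_\sigma^q(D),\ \nabla p \in \widetilde{G}^q(D).
\]
By the definitions of $\widetilde{L}_\sigma^q(D)$ and $\widetilde{G}^q(D)$ in the range $q \geq 2$, one has $f_0 \in L_\sigma^2(D)$ and $\nabla p \in G^2(D)$.

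Next I would exploit the $L^2$ orthogonal decomposition, which holds for an arbitrary domain. Since $f \in L_\sigma^2(D)$ and $f_0 \in L_\sigma^2(D)$, the difference $\nabla p = f - f_0$ belongs to $L_\sigma^2(D)$; but it also lies in $G^2(D)$. The orthogonality $L_\sigma^2(D) \perp G^2(D)$ in the Hilbert space $L^2(D)^n$ forces $\nabla p = 0$, hence $f = f_0 \in \widetilde{L}_\sigma^q(D)$, which completes the proof of $L_\sigma^2(D) \cap L^q(D)^n \subseteq \widetilde{L}_\sigma^q(D)$.

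There is no serious obstacle here: the argument is essentially a uniqueness statement obtained by combining the existing $\widetilde{L}^q$-Helmholtz decomposition with the fact that the $L^2$ Helmholtz decomposition is orthogonal. The only point that requires a line of verification is the inclusion $\widetilde{G}^q(D) \subset G^2(D)$, which is direct from the definition
\[
\widetilde{G}^q(D) = G^q(D) \cap G^2(D) \quad \text{for } 2 \leq q < \infty,
\]
and the analogous inclusion $\widetilde{L}_\sigma^q(D) \subset L_\sigma^2(D)$.
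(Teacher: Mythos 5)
Your proof is correct and follows essentially the same route as the paper's: apply the $\widetilde{L}^q$-Helmholtz decomposition to $f$, observe that $\nabla p = f - f_0$ lies in both $L_\sigma^2(D)$ and $G^2(D)$, and invoke the $L^2$ orthogonality of these subspaces to conclude $\nabla p = 0$. The paper phrases the orthogonality step slightly differently (computing $\lVert f_0 - f \rVert_{L^2}^2 = -\langle f_0 - f, \nabla p \rangle = 0$ directly), but the underlying argument is identical.
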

\begin{proof}
For $f_\ast \in L_\sigma^2(D) \cap L^q(D)^n$, let $f_\ast = f_0 + \nabla p$ be the Helmholtz decomposition of $f_\ast$ in $\widetilde{L}^q(D)$ with $f_0 \in \widetilde{L}^q_\sigma(D)$ and $\nabla p \in \widetilde{G}^q(D)$. Note that $0 = (f_0 - f_\ast) + \nabla p$ with $f_0 - f_\ast \in L_\sigma^2(D)$ and $\nabla p \in G^2(D)$ would imply that
\[
\| f_0 - f_\ast \|_{L^2(D)}^2 = \langle f_0 - f_\ast, f_0 - f_\ast \rangle = - \langle f_0 - f_\ast, \nabla p \rangle = 0,
\] 
i.e., we must have $f_0 = f_\ast$. 
\end{proof}

Combine Proposition \ref{Chrac:LTq} with Proposition \ref{ITP}, we obtain the fact that
\[
vBMO^{\infty,\infty}(\Omega) \cap \widetilde{L}^{2n}_\sigma(\Omega) = vBMO^{\infty,\infty}(\Omega) \cap L_\sigma^2(\Omega).
\]
%%%%%%

%%%%%%
\section*{Acknowledgement}
The first author was partly supported by the Japan Society for the Promotion of Science through grants No. 20K20342 (Kaitaku), No. 19H00639 (Kiban A) and by Arithmer Inc., Daikin Industries Ltd. and Ebara Corporation through collaborative grants.
%
%\paragraph{Paragraph headings} Use paragraph headings as needed.
%\begin{acknowledgements}
%General acknowledgments should be placed at the end of the article.
%\end{acknowledgements}

% Authors must disclose all relationships or interests that 
% could have direct or potential influence or impart bias on 
% the work: 
%
% \section*{Conflict of interest}
%
% The authors declare that they have no conflict of interest.

% BibTeX users please use one of
%\bibliographystyle{spbasic}      % basic style, author-year citations
%\bibliographystyle{spmpsci}      % mathematics and physical sciences
%\bibliographystyle{spphys}       % APS-like style for physics
%\bibliography{}   % name your BibTeX data base

\begin{thebibliography}{99}
%
% and use \bibitem to create references. Consult the Instructions
% for authors for reference list style.
%
% Format for Journal Reference
%Author, Article title, Journal, Volume, page numbers (year)
% Format for books
%Author, Book title, page numbers. Publisher, place (year)
%
\bibitem{Bogo}
M.\ E.\ Bogovski$\breve{\i}$,
Decomposition of $L_p(\Omega, \mathbf{R}^n)$ into a direct sum of subspaces of solenoidal and potential vector fields,
Dokl. Akad. Nauk SSSR, $\mathbf{286}$ (1986), no. 4, 781--786
%
\bibitem{BG}
M.\ Bolkart and Y.\ Giga, 
On $L^\infty$-$BMO$ estimates for derivatives of the Stokes semigroup, 
Math.\ Z., $\mathbf{284}$ (2016), no. 3--4, 1163--1183
%
\bibitem{BGMST}
M.\ Bolkart, Y.\ Giga, T.-H.\ Miura, T.\ Suzuki, Y.\ Tsutsui,
On analyticity of the $L^p$-Stokes semigroup for some non-Helmholtz domains, 
Math.\ Nachr., $\mathbf{290}$ (2017), no. 16, 2524--2546 
%
\bibitem{BGS}
M.\ Bolkart, Y.\ Giga and T.\ Suzuki, 
Analyticity of the Stokes semigroup in $BMO$-type spaces, 
J.\ Math.\ Soc.\ Japan, $\mathbf{70}$ (2018), no. 1, 153--177
%
\bibitem{BGST}
M.\ Bolkart, Y.\ Giga, T.\ Suzuki, Y.\ Tsutsui, 
Equivalence of $BMO$-type norms with applications to the heat and Stokes semigroups, 
Potential Anal., $\mathbf{49}$ (2018), no. 1, 105--130
%
\bibitem{Bom}
J.\ Boman,
$L^p$-estimates for very strongly elliptic systems, Report 29,
Department of Mathematics, University of Stockholm (1982)
%
\bibitem{Bre}
G.\ E.\ Bredon,
Topology and geometry, 
Grad.\ Texts in Math., 139,
Springer-Verlag, New York (1997)
%
\bibitem{Evans}
L.\ C.\ Evans,
Partial differential equations, 
Grad.\ Stud.\ Math., 19,
American Mathematical Society, Providence, RI (2010)
%
\bibitem{Far}
R.\ Farwig,
Weighted $L^q$-Helmholtz decompositions in infinite cylinders and in infinite layers,
Adv.\ Differential Equation, $\mathbf{8}$ (2003), no. 3, 357--384
%
\bibitem{FaSo}
R.\ Farwig and H.\ Sohr,
Helmholtz decomposition and Stokes resolvent system for aperture domains in $L^q$-spaces,
Analysis, $\mathbf{16}$ (1996), no. 1, 1--26
%
\bibitem{FKS}
R.\ Farwig, H.\ Kozono and H.\ Sohr, 
An $L^q$-approach to Stokes and Navier-Stokes equations in general domains, 
Acta Math., $\mathbf{195}$ (2005), 21--53 
%
\bibitem{FKSH}
R.\ Farwig, H.\ Kozono and H.\ Sohr,
On the Helmholtz decomposition in general unbounded domains,
Arch.\ Math.\ (Basel), $\mathbf{88}$ (2007), no. 3, 239--248
%
\bibitem{Fed}
H.\ Federer,
Curvature measures,
Trans.\ Amer.\ Math.\ Soc., $\mathbf{93}$ (1959), 418--491
%
\bibitem{FM}
D.\ Fujiwara and H.\ Morimoto,
An $L_r$-theorem of the Helmholtz decomposition of vector fields, 
J.\ Fac.\ Sci.\ Univ.\ Tokyo Sect.\ IA Math., $\mathbf{24}$ (1977), no. 3, 685--700
%
\bibitem{Gal}
G.\ P.\ Galdi, 
An introduction to the mathematical theory of the Navier-Stokes equations. Steady-state problems. Second edition, 
Springer Monogr.\ Math.,
Springer, New York (2011)
%
\bibitem{GO}
F.\ W.\ Gehring and B.\ G.\ Osgood,
Uniform domains and the quasihyperbolic metric,
J.\ Analyse Math., $\mathbf{36}$ (1979), 50--74
%
\bibitem{GigaGuAMSA}
Y.\ Giga and Z.\ Gu,
On the Helmholtz decompositions of vector fields of bounded mean oscillation and in real Hardy spaces over the half space,
Adv.\ Math.\ Sci.\ Appl., $\mathbf{29}$ (2020), no. 1, 87--128
%
\bibitem{GigaGuPA}
Y.\ Giga and Z.\ Gu,
Normal trace for a vector field of bounded mean oscillation,
Potential Anal., $\mathbf{59}$ (2023), 409--434
%
\bibitem{GigaGuMA}
Y.\ Giga and Z.\ Gu,
The Helmholtz decomposition of a space of vector fields with bounded mean oscillation in a bounded domain,
Math.\ Ann., $\mathbf{386}$ (2023), 673--712
%
\bibitem{GigaGuP}
Y.\ Giga and Z.\ Gu,
The Helmholtz decomposition of a $BMO$ type vector field in a slightly perturbed half space,
J. Math. Fluid Mech., $\mathbf{25}$ (2023), no. 2, Paper No. 41, 46 pp
%
\bibitem{GT}
D.\ Gilbarg and N.\ S.\ Trudinger,
Elliptic partial differential equations of second order, Second edition,
Grundlehren Math.\ Wiss., 224 [Fundamental Principles of Mathematical Sciences], 
Springer-Verlag, Berlin (1983)
%
\bibitem{GraC}
L.\ Grafakos,
Classical Fourier analysis, Third edition,
Grad.\ Texts in Math., 249,
Springer, New York (2014)
%
\bibitem{Gu}
Z.\ Gu,
Extension theorem for $bmo$ in a domain,
preprint (2023)
%
\bibitem{HKSSY20}
M.\ Hieber, H.\ Kozono, A.\ Seyfert, S.\ Shimizu and T.\ Yanagisawa,
A characterization of harmonic $L^r$-vector fields in two-dimensional exterior domains,
J. Geom. Anal., $\mathbf{30}$ (2020), no. 4, 3742--3759 
%
\bibitem{HKSSY21a}
M.\ Hieber, H.\ Kozono, A.\ Seyfert, S.\ Shimizu and T.\ Yanagisawa,
The Helmholtz-Weyl decomposition of $L^r$ vector fields for two dimensional exterior domains,
J. Geom. Anal., $\mathbf{31}$ (2021), no. 5, 5146--5165
%
\bibitem{HKSSY21b}
M.\ Hieber, H.\ Kozono, A.\ Seyfert, S.\ Shimizu and T.\ Yanagisawa,
$L^r$ Helmholtz-Weyl decomposition for three dimensional exterior domains,
J. Funct. Anal., $\mathbf{281}$ (2021), no. 8, Paper No. 109144, 52 pp
%
\bibitem{HKSSY22}
M.\ Hieber, H.\ Kozono, A.\ Seyfert, S.\ Shimizu and T.\ Yanagisawa,
A characterization of harmonic $L^r$-vector fields in three dimensional exterior domains,
J. Geom. Anal., $\mathbf{32}$ (2022), no. 7, Paper No. 206, 26 pp
%
\bibitem{PJS}
P.\ W.\ Jones,
Quasiconformal mappings and extendability of functions in Sobolev spaces,
Acta Math., $\mathbf{147}$ (1981), no. 1-2, 71--88
%
\bibitem{PJB}
P.\ W.\ Jones,
Extension theorems for $BMO$,
Indiana Univ.\ Math.\ J., $\mathbf{29}$ (1980), no. 1, 41--66
%
\bibitem{KW}
H.\ Kozono and H.\ Wadade,
Remarks on Gagliardo-Nirenberg type inequality with critical Sobolev space and $BMO$,
Math. Z., $\mathbf{259}$ (2008), no. 4, 935--950
%
\bibitem{KY}
H.\ Kozono and T.\ Yanagisawa,
$L^r$-variational inequality for vector fields and the Helmholtz-Weyl decomposition in bounded domains,
Indiana Univ. Math. J., $\mathbf{58}$ (2009), 1853--1920
%
\bibitem{KP}
S.\ G.\ Krantz and H.\ R.\ Parks, 
The implicit function theorem. History, theory, and applications, 
Reprint of the 2003 edition, 
Mod.\ Birkh\"auser Class.,  
Birkh\"auser/Springer, New York (2013)
%
\bibitem{Lee}
J.\ M.\ Lee,
Introduction to smooth manifolds, Second edition, 
Grad.\ Texts in Math., 218,
Springer, New York (2013)
%
\bibitem{MaSa}
O.\ Martio and J.\ Sarvas,
Injectivity theorems in plane and space,
Ann.\ Acad.\ Sci.\ Fenn.\ Ser.\ A I Math., $\mathbf{4}$ (1979), no. 2, 383--401
%
\bibitem{MaBo}
V.\ N.\ Maslennikova and M.\ E.\ Bogovski$\breve{\i}$,
Elliptic boundary value problems in unbounded domains with noncompact and nonsmooth boundaries,
Rend.\ Sem.\ Mat.\ Fis.\ Milano, $\mathbf{56}$ (1986), 125--138 
%
\bibitem{Miy94}
T.\ Miyakawa,
The Helmholtz decomposition of vector fields in some unbounded domains,
Math. J. Toyama Univ., $\mathbf{17}$ (1994), 115--149
%
\bibitem{Miyak}
T.\ Miyakawa,
Hardy spaces of solenoidal vector fields, with applications to the Navier-Stokes equations,
Kyushu J.\ Math., $\mathbf{50}$ (1996), no. 1, 1--64 
%
\bibitem{Nec}
J.\ Ne$\check{\text{c}}$as,
Direct methods in the theory of elliptic equations,
Springer Monogr.\ Math., 
Springer, Heidelberg (2012)
%
\bibitem{SiSo}
C.\ G.\ Simader and H.\ Sohr,
A new approach to the Helmholtz decomposition and the Neumann problem in $L^q$-spaces for bounded and exterior domains,
Ser.\ Adv.\ Math.\ Appl.\ Sci., 11, World Scientific Publishing Co., Inc., River Edge, NJ (1992), 1--35
%
\bibitem{HSo}
H.\ Sohr,
The Navier-Stokes equations. An elementary functional analytic approach,
Mod.\ Birkh$\ddot{\text{a}}$user Class., 
Birkh$\ddot{\text{a}}$user/Springer Basel AG, Basel (2001)
%
\bibitem{Sol}
V.\ A.\ Solonnikov,
Estimates for solutions of nonstationary Navier-Stokes equations,
Journal of Soviet Mathematics, $\mathbf{8}$ (1977), 467--529
%
\bibitem{Stap}
S.\ G.\ Staples,
$L^p$-averaging domains and the Poincar$\acute{\mathrm{e}}$ inequality,
Ann.\ Acad.\ Sci.\ Fenn.\ Ser.\ A.\ I Math., $\mathbf{14}$ (1989), no. 1, 103--127
%
\end{thebibliography}
%%%%%%

%%%%%%
% Non-BibTeX users please use

%%%%%%%%%
\end{document}